\declaretheoremstyle[spaceabove=12pt plus 2pt minus 4pt ,spacebelow=12pt plus 2pt minus 4pt,bodyfont=\normalfont]{scdef}
\declaretheoremstyle[spaceabove=12pt plus 2pt minus 4pt,spacebelow=8pt plus 2pt minus 4pt,bodyfont=\itshape]{scthm} 
\declaretheoremstyle[spaceabove=\topsep,spacebelow=12pt plus 2pt minus 4pt,bodyfont=\normalfont,headfont=\itshape,notefont=\itshape,postheadspace=0.3em,qed=\qedsymbol,headformat=\NAME\NOTE,notebraces=\relax]{scpf}
\declaretheorem[numbered=no,style=scthm,name=Theorem, refname={Theorem,Theorems}, Refname={Theorem,Theorem}] {theorem*}
\declaretheorem[style=scthm,numberwithin=section,name=Theorem, refname={Theorem,Theorems}, Refname={Theorem,Theorem}] {theorem} 
\declaretheorem[style=scthm,sharenumber=theorem, name=Lemma, refname={Lemma,Lemmata}, Refname={Lemma,Lemma}] {lemma} 
\declaretheorem[style=scthm,sharenumber=theorem, name=Corollary, refname={Corollary,Corollaries}, Refname={Corollary,Corollaries}] {corollary}
\declaretheorem[style=scdef,sharenumber=theorem, name=Remark, refname={Remark,Remarks}, Refname={Remark,Remarks}] {remark}
\declaretheorem[style=scdef, sharenumber=theorem, name=Definition, refname={Definition,Definitions}, Refname={Definition,Definitions}]{definition}
\declaretheorem[style=scdef, sharenumber=theorem, name=Proposition, refname={Proposition,Propositions}, Refname={Proposition,Propositions}]{proposition}
\declaretheorem[style=scdef, sharenumber=theorem, name=Example, refname={Example,Examples}, Refname={Example,Examples}]{example}
\newcommand{\ad}{\mathrm{ad}}
\newcommand{\Ad}{\mathrm{Ad}}
\newcommand{\Aut}{\mathrm{Aut}}
\newcommand{\Isom}{\mathrm{Isom}}
\newcommand{\mf}{\mathfrak}
\newcommand{\R}{\mathbb{R}}
\newcommand{\C}{\mathbb{C}}
\newcommand{\Z}{\mathbb{Z}}
\newcommand{\N}{\mathbb{N}}
\newcommand{\id}{\mathrm{id}}
\newcommand{\wt}{\widetilde}
\newcommand\extalg{%
  \newlength{\len}%
  \settoheight{\len}{V}%
  \mathbin{%
    \resizebox{0.93\len}{0.93\len}{$\wedge$}%
    \kern-0.1em%
  }}%
  \DeclareMathOperator{\tr}{tr}
  \DeclareMathOperator{\rk}{rk}
  \DeclareMathOperator{\diag}{diag}
\title{Revisiting the Classification of Homogeneous 3-Sasakian and Quaternionic Kähler Manifolds}
\author{Oliver Goertsches, Leon Roschig \& Leander Stecker}
\begin{document}
\maketitle

\begin{abstract}
\begin{center} \textbf{Abstract} \end{center}
We provide a new, self-contained proof of the classification of homogeneous 3-Sasakian manifolds, which was originally obtained by \textsc{Boyer}, \textsc{Galicki} and \textsc{Mann} \cite{BGM}. In doing so, we construct an explicit one-to-one correspondence between simply connected homogeneous 3-Sasakian manifolds and simple complex Lie algebras via the theory of root systems. We also discuss why the real projective spaces are the only non-simply connected homogeneous 3-Sasakian manifolds and derive the famous classification of homogeneous positive quaternionic Kähler manifolds due to \textsc{Wolf} \cite{Wolf} and \textsc{Alekseevskii} \cite{Alek} from our results.
\end{abstract}

\tableofcontents

\section{Introduction}

3-Sasakian geometry is arguably one of the most important odd-dimensional geometries. It provides a rich source of compact Einstein manifolds, lies ``sandwiched'' between the famous hyperkähler, quaternionic Kähler (qK) and Kähler-Einstein geometries and also has several links to algebraic geometry \cite{BG}. \\
The history of the classification of homogeneous manifolds with this geometry is rather long and complicated, as it involves work from as early as 1961 \cite{Boot} and as recently as 2020 \cite{DOP}. In this article, we would like to summarize, revisit and improve upon this classification by proving the following

\begin{theorem} \label{goal}
There is a one-to-one correspondence between simply connected homogeneous $3$-Sasakian manifolds and simple complex Lie algebras.

Given a complex simple Lie algebra $\mathfrak{u}$, choose a maximal root $ \alpha $ of $ \mf{u} $ and let $ \mf{v} $ denote the direct sum of the subspace $ \ker \alpha $ and the root spaces of roots perpendicular to $ \alpha $. Let $ \mathfrak{g} $ and $ \mf{h} $ be the compact real forms of $ \mf{u} $ and $ \mf{v} $, respectively, and write $ \mathfrak{k} \cong \mathfrak{sp}(1) $ for the compact real form of the $ \mf{sl}(2,\C) $-subalgebra defined by $ \alpha $. Let $ B $ denote the Killing form of $ \mf{g} $, set $ \mf{g}_1 = (\mf{h} \oplus \mf{k})^{\perp_B} $ and consider the reductive complement $ \mf{m} = \mf{h}^{\perp_B} = \mf{k} \oplus \mf{g}_1 $. Let $ G $ be the simply connected Lie group with Lie algebra $ \mf{g} $ and let $ H \subset G $ the connected subgroup with Lie algebra $ \mf{h} $. Define a $ G $-invariant Riemannian metric $ g $ on $ M = G/H $ by extending the inner product on $T_{eH}M \cong \mf{m}$ given by
\begin{gather*}
 g|_{\mf{k} \times \mf{k}} = - \frac{1}{4(n+2)} B \, , \qquad g|_{\mf{g}_1 \times \mf{g}_1} = - \frac{1}{8(n+2)} B \, , \qquad g|_{\mf{k} \times \mf{g}_1} = 0 \, .
\end{gather*}
Consider a basis $ X_1,X_2,X_3 $ of $ \mf{k} $ satisfying the commutator relations $ [X_i,X_j] = 2 \varepsilon_{ijk} X_k $ and extend $ X_i \in \mf{m} \cong T_{eH}M $ to a $ G $-invariant vector field $ \xi_i $ on $ M $. Let $\eta_i$ denote the metric dual of $\xi_i$ and $\varphi_i $ the $ G $-invariant endomorphism field defined by extending
\[ \varphi_i |_\mf{k} = \frac{1}{2} \, \ad_{X_i} \, , \qquad \varphi_i|_{\mf{g}_1} = \ad_{X_i} \, . \]
Then, $(g,\xi_i,\eta_i,\varphi_i)_{i=1,2,3}$ is a $G$-invariant $3$-Sasakian structure on $ M $.

Conversely, given a simply connected homogeneous $3$-Sasakian manifold $M$, represented as the quotient $\widetilde{G}/\widetilde{H}$, where $ \widetilde{G} $ is connected and acts effectively, then $\widetilde{G} = \mathrm{Aut}_0(M)$, the connected component of the $3$-Sasakian automorphism group of $M$, and $M$ is the unique space associated with the complexification of the Lie algebra of $ \widetilde{G} $.
\end{theorem}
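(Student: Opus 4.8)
The plan is to prove the two directions of the correspondence separately and then verify that they are mutually inverse. \textbf{Forward direction.} Given $\mf{u}$ and a maximal root $\alpha$, I would first unpack the algebraic data. Decomposing $\mf{u}$ under the adjoint action of the coroot $h_\alpha$ of the highest-root $\mf{sl}(2,\C)$ yields eigenvalues in $\{-2,-1,0,1,2\}$, with the $\pm 2$-eigenspaces one-dimensional and spanned by the root vectors $e_{\pm\alpha}$; this is the defining feature of a highest root. Passing to compact real forms gives the reductive decomposition $\mf{g} = \mf{h} \oplus \mf{k} \oplus \mf{g}_1 = \mf{h}\oplus\mf{m}$, in which $\mf{k}\cong\mf{sp}(1)$ is spanned by $e_{\pm\alpha}$ and $ih_\alpha$, the space $\mf{g}_1$ is the sum of the $\pm 1$-eigenspaces (so $\dim\mf{g}_1 = 4n$), and $\mf{h}$, the compact form of $\mf{v}$, is exactly the centralizer of $\mf{k}$ in $\mf{g}$. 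Using that $\langle\beta,\alpha^\vee\rangle = 0$ forces $\beta\pm\alpha$ not to be a root, one verifies the bracket relations
\[ [\mf{h},\mf{k}]=0,\quad [\mf{k},\mf{k}]\subseteq\mf{k},\quad [\mf{k},\mf{g}_1]\subseteq\mf{g}_1,\quad [\mf{h},\mf{g}_1]\subseteq\mf{g}_1,\quad [\mf{g}_1,\mf{g}_1]\subseteq\mf{h}\oplus\mf{k}, \]
and that $\mf{g}_1$ is a sum of copies of the standard $\mf{sp}(1)$-module, on which $\ad_{X_i}^2 = -\id$ while $\tfrac12\ad_{X_i}$ squares to $-\id$ on $\mf{k}\ominus\R X_i$.

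\textbf{Verification of the axioms.} With this decomposition I would compute the Levi-Civita connection of $g$ via the Nomizu formula, reducing everything to $\Ad(H)$-equivariant algebra on $\mf{m}$. The pointwise identities $\varphi_i^2 = -\id + \eta_i\otimes\xi_i$, the compatibility of $g$ with each $\varphi_i$, and the quaternionic relations $\varphi_i\varphi_j = \varphi_k - \eta_j\otimes\xi_i$ (and cyclic permutations) together with $[\xi_i,\xi_j]=2\varepsilon_{ijk}\xi_k$ then follow directly from $[X_i,X_j]=2\varepsilon_{ijk}X_k$ and the eigenvalue normalisation of $\ad_{X_i}$ recorded above. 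The remaining Sasakian condition, which I would phrase as $\nabla_X\xi_i = -\varphi_i X$ together with $(\nabla_X\varphi_i)Y = g(X,Y)\xi_i - \eta_i(Y)X$, is where the precise constants enter: the factor $2$ between the scalings $-\tfrac{1}{4(n+2)}$ on $\mf{k}$ and $-\tfrac{1}{8(n+2)}$ on $\mf{g}_1$ is exactly what makes the Nomizu term reproduce $-\varphi_i$, while the overall factor fixes $g(\xi_i,\xi_i)=1$. This is a finite, representation-theoretic check once the structure constants are in hand.

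\textbf{Converse.} Given a simply connected homogeneous $3$-Sasakian $M=\wt{G}/\wt{H}$ with $\wt{G}$ connected and effective, I would first invoke that a $3$-Sasakian metric is Einstein of positive scalar curvature, so $M$ is compact and $\wt{G}$ may be taken compact with reductive $\mf{g}=\mathrm{Lie}(\wt{G})$. The three Reeb fields are infinitesimal automorphisms spanning an $\mf{sp}(1)$-subalgebra $\mf{k}\subseteq\mf{g}$; effectiveness forces $\mf{h}=\mathrm{Lie}(\wt{H})$ to contain no nonzero ideal of $\mf{g}$, which excludes abelian factors and reduces $\mf{g}$ to a compact semisimple algebra. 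The crux is then to show that $\ad$ of $\mf{k}$ on $\mf{g}$ has only the eigenvalues $\{-2,-1,0,1,2\}$ with one-dimensional extreme spaces, that $[\mf{g}_1,\mf{g}_1]$ surjects onto $\mf{k}$, and hence that $\mf{k}$ is the compact form of a highest-root $\mf{sl}(2,\C)$ inside a \emph{simple} factor exhausting all of $\mf{g}$.

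\textbf{Main obstacle and conclusion.} I expect the hard part to be exactly this rigidity: ruling out that $\mf{g}$ splits as a proper semisimple sum and pinning $\mf{k}$ to the highest-root $\mf{sl}_2$. The weight structure of $\mf{g}_1$ as an $\mf{sp}(1)$-module, combined with the curvature constraints and the fact that $\mf{k}\oplus\mf{g}_1$ generates $\mf{g}$, should close this off and simultaneously identify $\mf{h}$ with the full centralizer of $\mf{k}$, leaving no room for a larger isotropy; applying the same structural analysis to $\Aut_0(M)\supseteq\wt{G}$ and comparing dimensions ($\dim\mf{g}/\mf{h}=\dim M=4n+3$) then yields $\wt{G}=\Aut_0(M)$. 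Finally, since the highest root of a simple $\mf{u}$ is unique up to the Weyl group and diagram automorphisms, the recovered data $(\mf{g},\mf{h},\mf{k})$ depends only on $\mf{u}=\mf{g}^{\C}$ up to isomorphism, so the two constructions are inverse to one another and the correspondence is a bijection.
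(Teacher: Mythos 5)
Your forward direction is sound and is essentially the paper's: decompose $\mf{u}$ by the $\ad$-eigenvalues of the coroot of a maximal root, pass to compact real forms, and verify the Sasakian identities by the Nomizu formula (the paper outsources this last verification to Draper--Ortega--Palomo, Theorem 4.2, but the content is the same). The converse, however, rests on a false premise. You take $\mf{k}$ to be ``spanned by the three Reeb fields'', asserting that these are infinitesimal automorphisms lying in $\mf{g}=\mathrm{Lie}(\wt{G})$. Neither claim is true. First, the $\xi_i$ are not infinitesimal $3$-Sasakian automorphisms: an automorphism must preserve each $\xi_j$ individually, whereas $\mathcal{L}_{\xi_1}\xi_2=[\xi_1,\xi_2]=2\xi_3\neq 0$, so $\xi_i\notin\mf{aut}(M)$. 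Second, even as Killing fields they need not lie in $\mf{g}$: for $M=S^{4n+3}$ with $\wt{G}=Sp(n+1)=\Aut_0(M)$, the Reeb fields generate the right $Sp(1)$-scalar multiplication on $\mathbb{H}^{n+1}$, which is not contained in $Sp(n+1)$. Since everything downstream in your converse (the eigenvalue analysis of $\ad_{\mf{k}}$ on $\mf{g}$, the identification $\mf{h}=C_{\mf{g}}(\mf{k})$, the maximal root) is built on this subalgebra, the argument collapses at its first step. The paper avoids exactly this trap by a Boothby--Wang argument: it pulls back the contact forms along the orbit map to $\Ad(H)$-invariant one-forms $\alpha_i$ on $\mf{g}$, takes suitably normalized Killing-form duals $X_i\in\mf{g}$, and proves $C_{\mf{g}}(X_i)=\ker d\alpha_i=\mf{h}\oplus\langle X_i\rangle$, $[X_i,X_j]=2\varepsilon_{ijk}X_k$, and that the fundamental field $\overline{X_i}$ agrees with $\xi_i$ \emph{only at the base point} $p$ --- which is all that is needed.

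Three further steps would fail as written. (i) Semisimplicity: effectiveness only shows that $\mf{h}$ contains no nonzero ideal of $\mf{g}$; it does not exclude a center (a torus acts effectively and transitively on itself). The paper kills $Z(\mf{g})$ geometrically, using $d\eta_i(\overline{X},\overline{Y})=\eta_i([\overline{X},\overline{Y}])$ for fundamental fields together with nondegeneracy of $d\eta_i$ transverse to $\xi_i$. (ii) The spectrum claim: Cartan integers can equal $\pm 3$, so ``eigenvalues in $\{-2,\dots,2\}$ with one-dimensional extremes'' is not automatic; the paper needs a separate geometric argument (its section on $\mf{g}_2$, using the behaviour of $\varphi_2$ on root spaces) to rule out that the constructed root is a \emph{short} root of $\mf{g}_2$, and your sketch has no replacement for it. (iii) Your dimension comparison cannot yield $\wt{G}=\Aut_0(M)$: the equality $\dim\mf{g}/\mf{h}=4n+3$ holds automatically for \emph{every} transitive subgroup and produces no contradiction when $\mf{g}\subsetneq\mf{aut}(M)$. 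The paper instead characterizes the reductive complement $\mf{m}$ intrinsically, as the space of Killing fields $Y$ whose covariant derivative satisfies a fixed first-order identity determined by the $3$-Sasakian tensors, and then recovers $\mf{g}=[\mf{m},\mf{m}]+\mf{m}$ by simplicity; without an argument of this kind, the uniqueness (injectivity) half of the claimed bijection remains unproved.
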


Using this characterization, we rediscover the list of homogeneous $3$-Sasakian manifolds as given by \textsc{Boyer}, \textsc{Galicki} and \textsc{Mann}:
\begin{corollary} \label{3Slist}
Every homogeneous 3-Sasakian manifold $ M = G/H $ (not necessarily simply connected) is isomorphic to one of the following spaces:
\begin{align*}
\frac{Sp(n+1)}{Sp(n)} \cong S^{4n+3}\, &, \quad \frac{Sp(n+1)}{Sp(n) \times \Z_2} \cong  \R P^{4n+3}\, , \quad \frac{SU(m)}{S(U(m-2) \times U(1))} \, , \\
\frac{SO(k)}{SO(k-4) \times Sp(1)}\, &, \quad \frac{G_2}{Sp(1)} \, , \quad \frac{F_4}{Sp(3)} \, , \quad \frac{E_6}{SU(6)} \,, \quad \frac{E_7}{Spin(12)} \, , \quad \frac{E_8}{E_7} \, .
\end{align*}
To avoid redundancy, we need to assume $ n \geq 0 $, $ m \geq 3 $ and $ k \geq 7 $.
\end{corollary}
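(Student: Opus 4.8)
The plan is to feed the Cartan--Killing classification of simple complex Lie algebras into Theorem~\ref{goal}. Since that theorem establishes a bijection between simply connected homogeneous 3-Sasakian manifolds and simple complex Lie algebras $\mf{u}$, it suffices to run the construction of the theorem once for each type in the list $A_m, B_k, C_n, D_k, G_2, F_4, E_6, E_7, E_8$ and read off the resulting coset $G/H$. The only global input needed beyond the theorem is the identification of the non-simply-connected spaces, which I would import from the preceding discussion.

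For each type I would fix a standard realization of the root system, pick the maximal root $\alpha$ (equivalently the highest root) and determine $\mf{v}$, whose semisimple part is generated by the roots orthogonal to $\alpha$ while its central part is accounted for by $\ker\alpha$. Concretely: for $\mf{sl}(m,\C) = A_{m-1}$ the maximal root is $e_1 - e_m$ and the orthogonal roots span an $A_{m-3}$, so $\mf{v} \cong \mf{sl}(m-2,\C)\oplus\C$ with compact real form integrating to $S(U(m-2)\times U(1))$; for $\mf{so}(k,\C)$ the maximal root $e_1+e_2$ is orthogonal to a subsystem splitting off an $A_1$ together with a smaller orthogonal-type system, giving $Sp(1)\times SO(k-4)$; and for $\mf{sp}(n+1,\C) = C_{n+1}$ the maximal root $2e_1$ is orthogonal to a full $C_n$, so $\mf{v} \cong \mf{sp}(n,\C)$ and $M = Sp(n+1)/Sp(n) \cong S^{4n+3}$. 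The exceptional cases are identical in spirit: the subsystems orthogonal to the maximal root are $A_1 \subset G_2$, $C_3 \subset F_4$, $A_5 \subset E_6$, $D_6 \subset E_7$ and $E_7 \subset E_8$, yielding the subgroups $Sp(1), Sp(3), SU(6), Spin(12)$ and $E_7$ respectively. Taking $G$ to be the compact simply connected group with Lie algebra $\mf{g}$ and $H$ the connected subgroup with Lie algebra $\mf{h}$ then produces exactly the spaces in the list; in the orthogonal cases I would present the quotient through $SO(k)$ rather than its spin cover, since the underlying homogeneous manifolds coincide.

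It remains to rule out double-counting and to insert the non-simply-connected entry. The ranges $n \geq 0$, $m \geq 3$, $k \geq 7$ are forced precisely by the low-rank exceptional isomorphisms $A_1 = C_1$, $B_2 = C_2$ and $A_3 = D_3$ (together with $D_2$ being non-simple): each such coincidence would otherwise list the same simple complex Lie algebra, and hence the same manifold, under two different names, and the stated ranges keep exactly one representative of each isomorphism class. Finally, by the discussion preceding the corollary, every non-simply-connected homogeneous 3-Sasakian manifold is a free quotient of one of the simply connected models by a finite group of 3-Sasakian automorphisms, and this forces the single additional entry $\R P^{4n+3} \cong Sp(n+1)/(Sp(n)\times\Z_2)$.

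The step I expect to be most delicate is not the root-theoretic bookkeeping but the passage from the compact real form $\mf{h}$ of $\mf{v}$ to the correct connected subgroup $H$: obtaining $Spin(12)$ rather than $SO(12)$ inside $E_7$, and more generally pinning down the component group and center of $H$, requires tracking how the subgroup sits inside the simply connected $G$, which is where the genuine work lies.
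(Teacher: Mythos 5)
Your overall strategy coincides with the paper's: feed the Cartan--Killing list into \autoref{goal}, and for each simple type read off the subsystem of roots orthogonal to the highest root (this is exactly the Borel--de Siebenthal recipe the paper uses in \Cref{Isotropy}). Your root-theoretic bookkeeping is correct, as is your explanation of the ranges $n\geq 0$, $m\geq 3$, $k\geq 7$ via the low-rank coincidences. However, there are two genuine gaps, and they sit precisely where the content of the corollary exceeds the Lie-algebra level. First, the corollary asserts specific \emph{global} forms of the isotropy group: $Spin(12)$ rather than $SO(12)$ inside $E_7$, $SU(6)$ rather than a quotient of it inside $E_6$, $Sp(3)$ rather than $Sp(3)/\Z_2$ inside $F_4$, and so on. You explicitly flag this as ``where the genuine work lies'' but give no method for carrying it out, so your argument establishes the list only up to local isomorphism of the isotropy. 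The paper closes this gap with explicit matrix computations of $C_G(K)$ and its identity component in the classical cases, and in the exceptional cases with the result of Ishitoya--Toda that $\pi_2(G/N_G(K))=\Z_2$, equivalently $\pi_1(N_G(K))=\Z_2$ since $G$ is simply connected, combined with the structure result $N_G(K)=(H\times Sp(1))/\Z_2$; together these force $H$ to be simply connected, which is what pins down $Spin(12)$, $SU(6)$, $Sp(3)$ and $E_7$ as the exact isotropy groups.

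Second, the non-simply-connected entry is asserted rather than proven. The reduction to the universal cover is fine (the $3$-Sasakian structure lifts, and the fundamental group is finite by \autoref{Einstein}), but the claim that the only possible quotient is $\R P^{4n+3}$ is a theorem in its own right (\autoref{fundthm}); nothing in \autoref{goal} ``forces'' it, since one must rule out further finite quotients of each of the other model spaces. The paper's argument runs as follows: the quotients of $G/H$ carrying an induced homogeneous $3$-Sasakian structure correspond to subgroups of $\bigl(N_G(H)\cap C_G(K)\bigr)/H$; this numerator is generated by $H\cup Z(K)$; and a case-by-case check shows $Z(K)\subset H$ for every $G$ except $Sp(n+1)$, where the quotient is $\Z_2$. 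That check is not free of content: in the $Spin(k)$ case it requires identifying the lift of $SO(k-4)\times SO(4)$ to $Spin(k)$ as $Spin(k-4)\times_{\Z_2}Spin(4)$, and in the exceptional cases it again invokes Ishitoya--Toda. Without this computation, your proposal leaves open the possibility of additional non-simply-connected spaces in the list.
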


As a consequence, we also arrive at the complete list of homogeneous positive qK mani- folds as discovered by \textsc{Wolf} and \textsc{Alekseevskii}:
\begin{corollary} \label{Wolf}
Every homogeneous positive qK manifold is isometric to one of the spaces
\begin{align*}
\frac{Sp(n+1)}{Sp(n)\times Sp(1)} \, &, \quad \frac{SU(m)}{S(U(m-2) \times U(2))} \, , \quad \frac{SO(k)}{SO(k-4) \times SO(4)}\, , \\ & \hspace{-3cm} \frac{G_2}{SO(4)} \, , \quad \frac{F_4}{Sp(3)Sp(1)} \, , \quad \frac{E_6}{SU(6)Sp(1)} \,, \quad \frac{E_7}{Spin(12)Sp(1)} \, , \quad \frac{E_8}{E_7Sp(1)} \, ,
\end{align*}
where the Riemannian metric and quaternionic structure are also determined by \autoref{goal} via the so-called Konishi bundle (see \autoref{fibr} and \Cref{wolf} for details).
\end{corollary}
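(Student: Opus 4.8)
The strategy is to reduce the statement to the classification of homogeneous $3$-Sasakian manifolds already established in \autoref{3Slist}, via the Konishi fibration of \autoref{fibr} that realises every positive qK manifold as the quaternionic quotient of a $3$-Sasakian manifold. Let $M$ be a homogeneous positive qK manifold. I would first record two structural facts: $M$ is compact, since positivity makes it Einstein of positive scalar curvature and Myers' theorem applies; and $M$ is simply connected, as is known for all positive qK manifolds. Hence $M = G/K$ with $G$ a compact connected Lie group acting transitively by isometries, and the parallel quaternionic structure distinguishes a subgroup isomorphic to $Sp(1)$ (with image $SO(3)$) inside the isotropy $K$, acting on $T_{eK}M$ as the defining almost complex structures.

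The central step is to transport homogeneity from $M$ to the total space of its Konishi bundle. Writing $K$ as an almost-direct product $K = K' \cdot Sp(1)$ that splits off this quaternionic $Sp(1)$, I would set $S = G/K'$, so that $S \to M$ is exactly the $SO(3)$-bundle underlying the Konishi construction and $G$ acts transitively on $S$. By \autoref{fibr}, the $G$-invariant qK data on $M$ together with the canonical connection equip $S$ with a $3$-Sasakian structure whose Reeb fields generate the fibrewise $Sp(1)$-action; since this structure is built canonically from the invariant data downstairs, $G$ acts on $S$ by $3$-Sasakian automorphisms. Passing if necessary to the fibrewise double cover (the $Sp(1)$-bundle, which exists because the relevant Marchiafava--Romani class vanishes on a positive qK manifold) and using that $Sp(1) \cong S^3$ is simply connected, the homotopy sequence of $Sp(1) \to S \to M$ shows $S$ may be taken simply connected.

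Now $S$ is a simply connected homogeneous $3$-Sasakian manifold, so \autoref{goal} identifies it with one of $Sp(n+1)/Sp(n)$, $SU(m)/S(U(m-2)\times U(1))$, $SO(k)/(SO(k-4)\times Sp(1))$, $G_2/Sp(1)$, $F_4/Sp(3)$, $E_6/SU(6)$, $E_7/Spin(12)$ or $E_8/E_7$. Reforming the quaternionic quotient $M = S/Sp(1) = G/(K'\cdot Sp(1))$ reinserts the $Sp(1)$ removed above and produces, term by term, precisely the Wolf spaces in the statement, with metric and quaternionic structure inherited from the $3$-Sasakian tensors of \autoref{goal}. The non-simply-connected space $\R P^{4n+3}$ of \autoref{3Slist} fibres over the same base $\mathbb{HP}^n = Sp(n+1)/(Sp(n)\times Sp(1))$ as $S^{4n+3}$, so it yields no new qK manifold.

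The main obstacle is the lifting in the second step: one must verify that an arbitrary transitive group of qK isometries of $M$ lifts to a transitive group of $3$-Sasakian automorphisms of $S$, preserving the bundle, its connection and the induced triple $(\xi_i,\eta_i,\varphi_i)$, and that conversely every homogeneous positive qK manifold arises as such a quotient. The lift is available because the rank-$3$ subbundle $\mathcal{Q} \subset \mathrm{End}(TM)$ is holonomy-invariant and therefore preserved by every isometry, so isometries of $M$ act on the frame bundle of $\mathcal{Q}$; checking that these induced maps are genuine $3$-Sasakian automorphisms, and matching the metric normalisation with that of \autoref{goal}, is where the real work lies.
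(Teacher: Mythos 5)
Your overall strategy --- pass to the Konishi bundle, apply the $3$-Sasakian classification, quotient back down --- is the paper's strategy, but there is a genuine gap at what you call the central step, and you mislocate where the real difficulty lies. You assert that the isotropy group $K$ of $M=G/K$ contains a subgroup $Sp(1)$ (with image $SO(3)$) acting on $T_{eK}M$ as the quaternionic structures, so that $K=K'\cdot Sp(1)$ and $S=G/K'$ is the Konishi bundle with transitive $G$-action. Nothing guarantees this a priori. What comes for free is only a homomorphism $\rho\colon K\to SO(3)$, given by the isotropy action on the oriented orthonormal frames of $Q$ at the base point; its image $V$ is a closed subgroup of $SO(3)$ of dimension $0$, $1$ or $3$, and the lifted $G$-action on the Konishi bundle is transitive precisely when $\dim V=3$. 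Ruling out $\dim V=0$ and $\dim V=1$ is the actual mathematical content of the proof, and it needs topological input: the paper shows that $\dim V=0$ would make the Konishi bundle a trivial principal bundle, contradicting the nonvanishing of its first Pontryagin class (which is represented, up to a factor, by the fundamental $4$-form), and that $\dim V=1$ would produce a global section of the twistor fibration $Z=S(Q)\to M$, which is impossible on a compact positive qK manifold. Your proposal never addresses either possibility; the part you describe as ``where the real work lies'' (checking that the lifted maps are $3$-Sasakian automorphisms and matching normalisations) is in fact the easy part, since the Reeb vector fields generate the fibrewise $SO(3)$-action, which commutes with $G$.

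Two further points. First, your justification for passing to a fibrewise double cover is false: the Marchiafava--Romani class does \emph{not} vanish on positive qK manifolds in general --- among them it vanishes only for $\mathbb{HP}^n$, which is exactly why the fiber in \autoref{fibr} is $SO(3)$ rather than $Sp(1)$ in all other cases, and why those Konishi bundles (being already simply connected) admit no connected double cover at all. The detour is also unnecessary: \autoref{3Slist} classifies \emph{all} homogeneous $3$-Sasakian manifolds, not just simply connected ones, so it applies directly to the $SO(3)$-bundle; its only non-simply connected instance $\R P^{4n+3}$ fibers over the same base $\mathbb{HP}^n$ as $S^{4n+3}$, as you note. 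Second, your appeal to the ``known'' simple connectivity of positive qK manifolds imports Salamon's theorem, which the paper deliberately avoids: it treats a possibly non-simply connected homogeneous qK manifold $\overline{B}=G/\overline{U}$ by lifting the qK structure to the cover $B=G/U$, comparing the two Konishi bundles via an equivariant map, and invoking its own \autoref{fundthm} to conclude that only $\R P^{4n+3}$, hence no new base, can occur. That is not an error on your part, but it replaces an argument the paper derives internally by heavy external machinery, and it would need an explicit citation rather than ``as is known.''
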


The discussion of \autoref{goal} and its consequences will be divided into several \mbox{sections:} We begin by recalling basic definitions and features of 3-Sasakian geometry in \mbox{\Cref{3SG}.} We then summarize the history of the classification (\Cref{hist}) and introduce a certain $ \Z $-grading of semisimple complex Lie algebras based on their root systems (\Cref{roots}). The first half of the proof of \autoref{goal} is given in \Cref{const}, where we construct homogeneous 3-Sasakian manifolds from simple Lie algebras. The centerpiece of this article is the converse argument in \Cref{univ}. For reasons that will become apparent during the construction, the special case of the exceptional Lie algebra $ \mf{g}_2 $ needs to be relegated to \Cref{G2}. We complete the proof of \autoref{goal} by showing that no proper subgroup of the identity component $ \Aut_0(M)$ of the automorphism group can act transitively in \Cref{AutM}. In \Cref{Isotropy} we compute the isotropy groups described in \autoref{3Slist} explicitly for the classical spaces and Lie theoretically via Borel-de Siebenthal theory in the exceptional cases. In \Cref{fund}, we show that the only non simply connected homogeneous $3$-Sasakian manifolds are the real projective spaces $\mathbb{R}P^{4n+3}$, which are the $\mathbb{Z}_2$-quotient of the previously described space $S^{4n+3}=Sp(n+1)/Sp(n)$. Finally, since our arguments are independent of the classification of homogeneous positive qK manifolds, they allow for an alternative proof of the latter (\Cref{wolf}). \\
\textbf{Acknowledgements:} The second named author is supported by the German Academic Scholarship Foundation. The third named author thanks Ilka Agricola and Cristina Draper for fruitful discussions about the subject.

\section{Fundamentals of 3-Sasakian Geometry} \label{3SG}

3-Sasakian geometry may be approached from a variety of different starting points, including but not limited to: qK and hyperkähler geometry, Einstein geometry, spin geo\-metry and certain areas of algebraic geometry. For the sake of brevity, we decided to limit the exposition in this article to the necessary minimum. The interested reader is referred to the comprehensive monograph \cite[Chapters 6, 8, 13]{BG}.

\begin{definition}
A \emph{Sasakian structure} is a tuple $ (M^{2n+1},g,\xi,\eta,\varphi) $, where $ \xi $ is a unit length Killing vector field, the one form $ \eta = g(\xi,\cdot) $, the endomorphism field $ \varphi = - \nabla^g \xi $ and the following curvature condition is satisfied:
\[ R(X,\xi)Y = \eta(Y)X - g(X,Y)\xi \quad \forall \; X,Y \in \mf{X}(M) \, . \vspace{-.125cm} \]
We refer to the objects $ \xi $, $ \eta $ and $ \varphi $ as the \emph{Reeb vector field}, \emph{contact form} and \emph{almost complex structure} of the Sasakian structure.
\end{definition}

\begin{remark} \label{Sdef}
The definition entails several important identities. Other common definitions of Sasakian manifolds in the literature are usually some selection of these. For $ X,Y \in \mf{X}(M) $ we have:
\begin{align*}
\varphi^2 = - \id + \eta \otimes \xi \, , \quad g(\varphi X, \varphi Y) = g(X&,Y) - \eta(X)\eta(Y) \, , \quad \varphi \xi = 0 \, , \quad \eta \circ \varphi = 0 \, , \\
g(X,\varphi Y) + g(\varphi X, Y) = 0 \, , \quad d\eta(X,Y) &= 2g(X,\varphi Y) \, , \quad d\eta(\varphi X, \varphi Y) = d\eta(X,Y) \, .
\end{align*}
We also remark that the Reeb vector field $ \xi $ is characterized uniquely by the properties $ \eta(\xi) = 1 $, $ d\eta(\xi, \cdot) = 0 $ and that the cone over a Sasakian manifold admits a Kähler structure.
\end{remark}

\begin{definition}
A \emph{3-Sasakian structure} is a tuple $ (M^{4n+3},g,\xi_i,\eta_i,\varphi_i)_{i=1,2,3} $ such that each $ (M,g,\xi_i,\eta_i,\varphi_i) $ is a Sasakian structure and $ g(\xi_i,\xi_j) = \delta_{ij} $, $ [\xi_i,\xi_j] = 2 \varepsilon_{ijk} \xi_k $, where $ \varepsilon_{ijk} $ denotes the Levi-Civita symbol and $(i,j,k)$ is any permutation of $(1,2,3)$.
\end{definition}

\begin{remark} \label{3Sdef}
For any cyclic permutation $(i,j,k)$ of $ (1,2,3) $ the definition implies the compatibility conditions
\[ \varphi_i \circ \varphi_j - \eta_j \otimes \xi_i = \varphi_k \, , \quad \varphi_i \xi_j = \xi_k \, , \quad \eta_i \circ \varphi_j = \eta_k \, . \]
The cone over a 3-Sasakian manifold admits a hyperkähler structure.
\end{remark}

\begin{proposition}[{\cite[Cor.~13.2.3]{BG}}] \label{Einstein}
Every 3-Sasakian manifold $ M $ of dimension $ 4n+3 $ is Einstein with Einstein constant $ 2(2n+1) $. Moreover, if $ M $ is complete, it is compact with finite fundamental group.
\end{proposition}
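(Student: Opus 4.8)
The plan is to deduce the Einstein property from the Ricci-flatness of the hyperkähler cone, and then to obtain compactness and finiteness of the fundamental group from the Bonnet--Myers theorem.

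For the Einstein claim I would first extract the Einstein constant directly from the Sasakian curvature condition. Writing $m = 4n+3 = \dim M$ and choosing a local orthonormal frame $\{e_a\}_{a=1}^m$, the pair symmetry of the curvature tensor together with $R(X,\xi_i)Y = \eta_i(Y)X - g(X,Y)\xi_i$ gives
\[ \mathrm{Ric}(Y,\xi_i) = \sum_a g\big(R(\xi_i,e_a)e_a,\, Y\big) = (m-1)\,\eta_i(Y) = 2(2n+1)\,\eta_i(Y) \]
for every $i$, using $\sum_a \eta_i(e_a)\,g(e_a,Y) = \eta_i(Y)$ in the last step. Thus the Ricci tensor already takes the asserted value on the vertical distribution spanned by $\xi_1,\xi_2,\xi_3$. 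To handle the horizontal directions I would pass to the metric cone $C(M) = \big(\R_{>0} \times M,\; dr^2 + r^2 g\big)$, which carries a hyperkähler structure by \Cref{3Sdef} and is therefore Ricci-flat. The standard cone formula relates the two Ricci tensors: on vectors tangent to $M$ one has $\overline{\mathrm{Ric}} = \mathrm{Ric} - (m-1)g$, while $\overline{\mathrm{Ric}}(\partial_r,\cdot) = 0$ automatically. Ricci-flatness of $C(M)$ then forces $\mathrm{Ric} = (m-1)g = 2(2n+1)\,g$, which is exactly the claimed Einstein condition and is consistent with the vertical computation above.

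For the second statement, positivity of the Einstein constant does the work. Since $\mathrm{Ric} = (m-1)g$, the Ricci curvature is bounded below by $(m-1)$ times the metric, so if $M$ is complete the Bonnet--Myers theorem bounds its diameter by $\pi$ and, via Hopf--Rinow, yields compactness. Applying the same argument to the universal Riemannian cover $\widetilde{M}$ --- which is again complete and Einstein with the same positive constant --- shows $\widetilde{M}$ is compact, whence the deck group $\pi_1(M)$ is finite.

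The conceptual core is the cone argument, and the one point that genuinely needs the full $3$-Sasakian hypothesis (rather than a single Sasakian structure) is the Ricci-flatness of the cone, which I am importing from \Cref{3Sdef}. If instead one insisted on a fully self-contained intrinsic proof, the main obstacle would be the horizontal part of the Einstein equation: this requires the second-order structure identity $(\nabla^g_X \varphi_i)Y = g(X,Y)\xi_i - \eta_i(Y)X$ together with a simultaneous bookkeeping of all three endomorphisms $\varphi_1,\varphi_2,\varphi_3$ when contracting the curvature, whereas the vertical computation and the Bonnet--Myers step are routine.
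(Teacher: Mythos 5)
Your proposal cannot be compared against an in-paper argument, because the paper does not prove this proposition at all: it is imported verbatim from \cite[Cor.~13.2.3]{BG}. That said, your proof is correct, and it is in essence the classical argument underlying the cited result. The cone $\bigl(\R_{>0}\times M,\,dr^2+r^2g\bigr)$ is hyperkähler (this is precisely what \autoref{3Sdef} records), hyperkähler metrics are Ricci-flat because their holonomy lies in $\Sp(n+1)\subset\SU(2n+2)$, and the warped-product formulas $\overline{\mathrm{Ric}}(\partial_r,\cdot)=0$ and $\overline{\mathrm{Ric}}(X,Y)=\mathrm{Ric}(X,Y)-(m-1)g(X,Y)$ for $m=\dim M=4n+3$ then force $\mathrm{Ric}=(m-1)g=2(2n+1)g$, which is the claimed Einstein condition. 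Bonnet--Myers applied to $M$ (with $\mathrm{Ric}=(m-1)g$, hence diameter at most $\pi$) and then to its universal cover gives compactness and finiteness of $\pi_1(M)$, exactly as you say. Two remarks: first, your vertical computation of $\mathrm{Ric}(Y,\xi_i)$ from the Sasakian curvature identity is correct but logically redundant, since the cone argument already determines the full Ricci tensor; it serves only as a sign/convention check. Second, your closing observation is the right one to make --- a single Sasakian structure only makes the cone Kähler, not Ricci-flat, so the full $3$-Sasakian hypothesis is genuinely needed here, which is consistent with the fact that Sasakian manifolds need not be Einstein while $3$-Sasakian ones automatically are.
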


\begin{definition} A \emph{3-Sasakian isomorphism} between 3-Sasakian manifolds $ (M,g,\xi_i,\eta_i,\varphi_i) $ and $ (\wt{M},\wt{g},\wt{\xi_i},\wt{\eta_i},\wt{\varphi_i}) $ is an isometry $ \phi: M \to \wt{M} $ which satisfies one of the three equivalent conditions $ \phi_* \xi_i = \wt{\xi_i} $, $ \phi^* \wt{\eta_i} = \eta_i $ or $ \phi_* \circ \varphi_i = \wt{\varphi_i} \circ \phi_* $ for $ i = 1,2,3 $. We will mostly be interested in the case $ (\wt{M},\wt{g},\wt{\xi_i},\wt{\eta_i},\wt{\varphi_i}) = (M,g,\xi_i,\eta_i,\varphi_i) $, in which we call $ \phi $ a \emph{3-Sasakian automorphism of $ M $}. \\
We denote the group of all such transformation by $ \Aut(M) $ and call $ M $ a \emph{homogeneous 3-Sasakian manifold} if $ \Aut(M) $ acts transitively. Homogeneous 3-Sasakian manifolds are in particular Riemannian homogeneous and thus, complete and compact. We remark that the Lie algebra $ \mf{aut}(M) $ consists of the Killing vector fields $ X \in \mf{X}(M) $ such that $ \mathcal{L}_X \xi_i = 0 $, $ \mathcal{L}_X \eta_i = 0 $ and $ \mathcal{L}_X \varphi_i = 0 $ for $ i =1,2,3 $.
\end{definition}

\begin{proposition}[{\cite[Theorem 13.3.13 \& Proposition 13.4.5]{BG}}] \label{fibr} The Reeb vector fields $ \xi_1,\xi_2,\xi_3 $ of a 3-Sasakian manifold $ M $ generate a 3-dimensional foliation $ \mathcal{F} $, whose space of leaves $ M/\mathcal{F} $ is a \emph{positive} (i.e.~its scalar curvature is positive) qK orbifold. \\
If $ M $ is a homogeneous 3-Sasakian manifold, then $ M \to M/\mathcal{F} $ is a locally trivial Riemannian fibration over a homogeneous positive qK manifold with fiber $ Sp(1) $ or $ SO(3) $.
\end{proposition}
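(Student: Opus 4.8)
The plan is to prove the two assertions in turn, first constructing the foliation together with its transverse geometry, and then specialising to the homogeneous situation. To begin, the relations $[\xi_i,\xi_j] = 2\varepsilon_{ijk}\xi_k$ show that $\mathcal{V} := \mathrm{span}(\xi_1,\xi_2,\xi_3)$ is an involutive rank-$3$ distribution, so by Frobenius it integrates to a foliation $\mathcal{F}$. Since the $\xi_i$ are orthonormal Killing fields, $\mathcal{F}$ is Riemannian, and using $\varphi_i = -\nabla^g\xi_i$ together with $\varphi_i\xi_j = \varepsilon_{ijk}\xi_k$ (from \autoref{3Sdef}) one computes $\nabla^g_{\xi_i}\xi_j = \varepsilon_{ijk}\xi_k \in \mathcal{V}$, so the leaves are totally geodesic. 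On the horizontal distribution $\mathcal{H} := \mathcal{V}^{\perp}$ all correction terms $\eta_j\otimes\xi_i$ vanish, so the identities of \autoref{Sdef} and \autoref{3Sdef} collapse to $\varphi_i^2 = -\id$ and $\varphi_i\varphi_j = \varphi_k$; hence the rank-$3$ subbundle $Q \subset \mathrm{End}(\mathcal{H})$ generated by $\varphi_1,\varphi_2,\varphi_3$ is an almost quaternionic structure compatible with $g|_{\mathcal{H}}$.

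Next I would descend these data and identify the base as qK. Because the $\xi_i$ are Killing and are permuted among themselves by their own flows, both $g|_{\mathcal{H}}$ and the bundle $Q$ are basic and therefore descend to an almost quaternionic Hermitian metric on the local leaf space $M/\mathcal{F}$. The crux is to show that $Q$ is parallel for the transverse Levi-Civita connection, i.e.~that $M/\mathcal{F}$ is genuinely quaternionic Kähler rather than merely quaternionic Hermitian. The cleanest route is via the hyperkähler cone $C(M)$ of \autoref{3Sdef}: the $\xi_i$ lift to a tri-Hamiltonian $Sp(1)$-action on $C(M)$ whose quaternionic Kähler (Swann) reduction is precisely $M/\mathcal{F}$, which is thus qK by construction. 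Positivity of its scalar curvature then follows from \autoref{Einstein}, since $M$ has positive Einstein constant $2(2n+1)$ and the O'Neill formulas for the totally geodesic Riemannian submersion $M \to M/\mathcal{F}$ propagate this to positive scalar curvature on the base.

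For the homogeneous case, \autoref{Einstein} gives that $M$ is compact, so the Killing fields $\xi_i$ are complete and integrate to an almost effective isometric action of a compact connected $3$-dimensional group $\mathcal{S}$ with Lie algebra $\mf{sp}(1)$; thus $\mathcal{S} \cong Sp(1)$ or $\mathcal{S} \cong SO(3)$. Pointwise linear independence of the $\xi_i$ makes the action locally free, so its orbits are exactly the (now compact) leaves of $\mathcal{F}$. Every $3$-Sasakian automorphism fixes each $\xi_i$, whence $\Aut_0(M)$ preserves $\mathcal{F}$ and permutes the $\mathcal{S}$-orbits; transitivity of $\Aut_0(M)$ then forces a single orbit type, so $M \to M/\mathcal{S} = M/\mathcal{F}$ is a locally trivial Riemannian fibre bundle and $\Aut_0(M)$ descends to a transitive action exhibiting $M/\mathcal{F}$ as a homogeneous positive qK manifold. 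Matching the typical fibre $\mathcal{S}/\Gamma$ with $\Gamma$ finite central against the two possibilities for $\mathcal{S}$ yields fibre $Sp(1)$ or $SO(3)$.

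I expect the main obstacle to be the parallelism step in the second paragraph, namely establishing the full quaternionic Kähler condition on the base rather than the comparatively formal almost quaternionic Hermitian structure; the hyperkähler-cone reduction is what makes this tractable, at the cost of importing the Swann correspondence as a black box.
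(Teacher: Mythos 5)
This proposition is never proved in the paper: it is quoted directly from Boyer--Galicki \cite[Theorem 13.3.13 \& Proposition 13.4.5]{BG}, so there is no internal argument to compare yours against, only the literature. Measured against that, your outline follows the standard route and is structurally sound. In particular the homogeneous half is essentially complete: compactness of $M$ (\autoref{Einstein}) and Myers--Steenrod give a compact isometry group, the Reeb algebra $\mathfrak{sp}(1)$ integrates to an isometric action of $Sp(1)$ or $SO(3)$, local freeness follows from pointwise linear independence, $3$-Sasakian automorphisms commute with this action so transitivity forces all isotropy groups to be \emph{equal} (hence finite, normal, and therefore central), and one gets a locally trivial fibration with fibre $Sp(1)$ or $SO(3)$. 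That chain of reasoning is exactly right.

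Two caveats, both in the first half. First, as you acknowledge, the entire analytic content of the statement -- that the descended structure is quaternionic K\"ahler rather than merely quaternionic Hermitian -- sits inside your black box. Citing the Swann correspondence in its orbifold form is legitimate, but it is a theorem of at least the same weight as the proposition being proved; the proof in \cite{BG} instead verifies the qK condition through the transverse geometry and O'Neill tensors of the submersion. The lowest-dimensional case shows why this cannot be finessed: for $\dim M = 7$ the base is $4$-dimensional, where parallelism of $Q$ is vacuous and ``qK'' must mean Einstein and self-dual, so any argument phrased purely in terms of holonomy or a parallel $Q$ proves nothing there; the cone reduction covers it only in that stronger formulation. (Also, basicness of $Q$ is not purely formal: it rests on the identities $\mathcal{L}_{\xi_i}\varphi_j = 2\varepsilon_{ijk}\varphi_k$ modulo terms vanishing on $\mathcal{H}$, which require a computation.) Second, the global orbifold claim for a general, non-homogeneous $M$ needs completeness of the Reeb vector fields: only then does the $\mathfrak{sp}(1)$-action integrate (Palais) to an action of the \emph{compact} group $Sp(1)$, whose local freeness makes $M/\mathcal{F}$ a Hausdorff orbifold. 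Your local-leaf-space argument yields the transverse structure but not this global statement, and \cite{BG} does assume completeness at this point.
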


\section{History of the Classification} \label{hist}

The earliest result concerning our topic was the classification for the related notion of (compact simply connected) homogeneous \emph{complex contact manifolds} (so-called \emph{$ \mathcal{C} $-spaces}) by \textsc{Boothby} in 1961 \cite{Boot}, who showed that these are in one-to-one correspondence to simple complex Lie algebras. The much more famous next step was the work of \textsc{Wolf} and \textsc{Alekseevskii} on qK manifolds in the 1960s. First, \textsc{Wolf} showed in 1961 that there is a one-to-one correspondence between $ \mathcal{C} $-spaces and compact simply connected \emph{symmetric} positive qK manifolds \cite[Theorem 6.1]{Wolf}. \\
\textsc{Boothby} and \textsc{Wolf} already emphasized the importance of the maximal root in the root system of a simple Lie algebra, which will also play a key role in our construction: \textsc{Wolf} demonstrated that the compact simply connected symmetric positive qK manifolds are precisely of the form $ G/N_G(K) $, where $ G $ is a compact simple Lie group and $ N_G(K) $ denotes the normalizer of the subgroup $ K $ corresponding to the compact real form of the subalgebra generated by the root spaces of a maximal root and its negative. As we will show in this article, the simply connected homogeneous 3-Sasakian manifolds are of the form $ G/(C_G(K))_0 $, where $ (C_G(K))_0 $ is the identity component of the centralizer $ C_G(K) $ of $ K $ in $ G $. \\
The manifolds of the type $ G /N_G(K) $ became known as \textsc{Wolf} spaces and \textsc{Alekseevskii} proved in 1968 that, in fact, all compact simply connected \emph{homogeneous} positive qK manifolds are of this form \cite[Theorem 1]{Alek}. We remark that \textsc{Alekseevskii} invoked the heavy \emph{Yano-Bochner theorem} several times throughout his proofs. \\
By 1994, \textsc{Boyer}, \textsc{Galicki} and \textsc{Mann} transferred these results to the 3-Sasakian realm \cite{BGM}. They combined the classification of homogeneous positive qK manifolds with \autoref{fibr} to obtain the following diffeomorphism type classification: \vspace{-.2cm}

\begin{theorem}[{\cite[Theorem C]{BGM}}] \label{bgm}
Every homogeneous 3-Sasakian manifold $ M = G/H $ (not necessarily simply connected) is precisely one of the following:
\begin{align*}
\frac{Sp(n+1)}{Sp(n)} \cong S^{4n+3}\, &, \quad \frac{Sp(n+1)}{Sp(n) \times \Z_2} \cong  \R P^{4n+3}\, , \quad \frac{SU(m)}{S(U(m-2) \times U(1))} \, , \\
\frac{SO(k)}{SO(k-4) \times Sp(1)}\, &, \quad \frac{G_2}{Sp(1)} \, , \quad \frac{F_4}{Sp(3)} \, , \quad \frac{E_6}{SU(6)} \,, \quad \frac{E_7}{Spin(12)} \, , \quad \frac{E_8}{E_7} \, .
\end{align*}
To avoid redundancy, we need to assume $ n \geq 0 $, $ m \geq 3 $ and $ k \geq 7 $.
\end{theorem}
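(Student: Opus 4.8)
The plan is to reconstruct the reduction of \textsc{Boyer}, \textsc{Galicki} and \textsc{Mann}, which passes from the 3-Sasakian world to the quaternionic Kähler one via the fibration of \autoref{fibr} and then invokes the \textsc{Wolf}--\textsc{Alekseevskii} classification recalled in \Cref{hist}. Let $M$ be a homogeneous 3-Sasakian manifold; by \autoref{Einstein} it is compact with finite fundamental group. By \autoref{fibr}, $M \to M/\mathcal F$ is a locally trivial fibration over a homogeneous positive qK manifold $N = M/\mathcal F$ with fiber $Sp(1)$ or $SO(3)$. The \textsc{Wolf}--\textsc{Alekseevskii} classification identifies $N$ as one of the Wolf spaces $G/N_G(K)$, where $G$ is compact simple and $K$ is the subgroup attached to a maximal root, with Lie algebra $\mf{sp}(1)$. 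Since $\Aut_0(M)$ acts transitively on $M$ and hence on $N$, and the qK isometry group $\Isom_0(N) = G$ lifts to 3-Sasakian automorphisms of the Konishi bundle, the space $M$ is $G$-homogeneous; in the simply connected case one identifies the isotropy as $(C_G(K))_0$, so that $M = G/(C_G(K))_0$. As $G$ is simply connected and $(C_G(K))_0$ is connected, the sequence $\pi_1(G) \to \pi_1\big(G/(C_G(K))_0\big) \to \pi_0\big((C_G(K))_0\big)$ confirms that these spaces are indeed simply connected.

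To extract the explicit nine families I would compute $(C_G(K))_0$ case by case. For the classical groups this is a direct computation with the defining representations: the centralizer of the maximal-root subgroup is $Sp(n)$ inside $Sp(n+1)$, yielding $S^{4n+3} = Sp(n+1)/Sp(n)$; it is $S(U(m-2)\times U(1))$ inside $SU(m)$; and $SO(k-4)\times Sp(1)$ inside $SO(k)$. For the five exceptional groups $G_2, F_4, E_6, E_7, E_8$ I would locate $K$ and its centralizer Lie-theoretically, reading off the semisimple part of the centralizer from the extended Dynkin diagram via Borel--de Siebenthal theory; this produces the isotropy groups $Sp(1), Sp(3), SU(6), Spin(12)$ and $E_7$, respectively. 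Comparing dimensions and discarding the low-rank coincidences between the classical families then fixes the redundancy bounds $n \ge 0$, $m \ge 3$, $k \ge 7$.

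It remains to treat the non-simply connected case. Here I would pass to the universal cover $\wt M$, which is again a simply connected homogeneous 3-Sasakian manifold, hence one of the spaces just listed, and write $M = \wt M/\Gamma$ for a finite group $\Gamma$ of 3-Sasakian automorphisms acting freely. Any such automorphism fixes every Reeb field $\xi_i$, hence commutes with the fiberwise $Sp(1)$-action and descends to an isometry of the simply connected base $N$; it therefore acts trivially on $N$ and moves points only along the $S^3 = Sp(1)$-fibers. A case check then shows that the only nontrivial element acting freely while preserving the entire 3-Sasakian structure is the central involution $-I \in Z(Sp(n+1))$ acting on $S^{4n+3} \subset \mathbb{H}^{n+1}$ as the antipodal map: it commutes with right multiplication by $i,j,k$, hence fixes all three $\xi_i$, and is fixed-point free, producing $\R P^{4n+3} = Sp(n+1)/(Sp(n)\times\Z_2)$ with its $SO(3)$-fiber. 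For every other $G$ the relevant central elements either lie in $(C_G(K))_0$, and so act trivially, or possess fixed points, forcing $\Gamma$ to be trivial.

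I expect the main obstacle to be twofold. The finer half of the second step — verifying that the centralizer of the maximal-root subgroup in, say, $E_7$ is exactly $Spin(12)$ and not one of its quotients, and similarly pinning down $SU(6) \subset E_6$ and $E_7 \subset E_8$ in their correct global form — is a delicate piece of structure theory that must be carried out separately for each exceptional algebra. The genuinely subtle point, however, is the rigidity claim in the final step: showing that $S^{4n+3}$ is the only simply connected member admitting a nontrivial free quotient by structure-preserving automorphisms requires controlling the full group $\Aut(\wt M)$ and the precise fixed-point behaviour of its central elements on each space, rather than merely its identity component.
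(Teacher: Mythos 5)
Your proposal follows the original Boyer--Galicki--Mann route (quotient by the Reeb foliation, invoke Wolf--Alekseevskii, lift back via the Konishi bundle), which is indeed how the paper \emph{presents} this theorem historically; but note that the paper's own proof of the same list (\Cref{3Slist}, via \autoref{goal} and \Cref{const,univ,G2,AutM,Isotropy,fund}) deliberately runs in the opposite logical direction: it establishes the correspondence with simple complex Lie algebras self-containedly, without the qK classification, and then \emph{derives} Wolf--Alekseevskii as a corollary in \Cref{wolf}. For the cited theorem your direction is admissible, but two ingredients you assert are nontrivial: applying Alekseevskii requires knowing $N = M/\mathcal{F}$ is compact and simply connected, and the transitivity of the lifted $G$-action on the fibers of the Konishi bundle is exactly the content of the two lemmata in \Cref{wolf} (non-vanishing of $p_1(P)$ rules out the trivial action; nonexistence of twistor sections rules out a one-dimensional orbit), neither of which appears in your sketch.

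The genuine gap is in your non-simply connected step, in two places. First, ``descends to an isometry of the simply connected base $N$; it therefore acts trivially on $N$'' is a non sequitur: simple connectivity of $N$ does not trivialize its isometries. What is true, and what the paper uses in \Cref{fund}, is that the deck group of $\wt{M} \to M$ is discrete and normalized by the connected lifted transitive group, hence centralizes it, hence acts by right translations; combined with preservation of the Reeb fields this identifies the possible quotients with subgroups of $\big(N_G(H)\cap C_G(K)\big)/H$. Second, your reduction to central elements of $G$ fails structurally: a central element $z$ acts on $G/H$ as a Clifford translation, which is fixed-point free unless $z \in H$, so your dichotomy ``lies in $(C_G(K))_0$ or possesses fixed points'' never has any content, and in any case the deck transformations are governed not by $Z(G)$ but by $Z(K)$, the center of the maximal-root $Sp(1)$: the paper's key lemma shows $N_G(H)\cap C_G(K)$ is generated by $H \cup Z(K)$, reducing everything to the question of whether $Z(K) \subset H$. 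On $S^{4n+3}$ the antipodal map is right translation by $\diag(I_n,-1) \in Z(K)$, which is \emph{not} central in $Sp(n+1)$ (it merely happens to induce the same map as $-I$), so your mechanism identifies the right example for the wrong reason. The substantive case check --- $Z(K)\subset H$ for $SU(m)$, for $Spin(k)$ via lifting $SO(k-4)\times SO(4)$ to $Spin(k-4)\times_{\Z_2}Spin(4)$, and for the exceptional groups via the Ishitoya--Toda description $N_G(K) = (H\times Sp(1))/\Z_2$ --- is absent from your proposal; that same Ishitoya--Toda input is also how the paper settles the global form of $H$ ($Spin(12)$ rather than a quotient, etc.), the other point you flagged but left open.
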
 \vspace{-.2cm}

They also provided a more precise description of the 3-Sasakian structures in the four classical cases via \emph{3-Sasakian reduction} \cite{BGM}. \\
In 1996, \textsc{Bielawski} \cite{Biel} described the Riemannian structure on these spaces uniformly. Both for his result and for several later discussions, we need to recall the following construction: As was first described systematically by \textsc{Kobayashi} and \textsc{Nomizu} \cite{KN}, the study of $ G $-invariant geometric objects on a reductive homogeneous space \mbox{$ M = G/H = G/G_p $} can be greatly simplified by instead considering $ \Ad(H) $-invariant algebraic objects on a fixed reductive complement $ \mf{m} $ of $ \mf{h} $ in $ \mf{g} $. More precisely, the map $ \psi : \mf{m} \to T_pM, \, X \mapsto \overline{X}_p $ (where $ \overline{X}_p $ denotes the fundamental vector field of the left $ G $-action at $ p $) is an isomorphism that allows us to translate between $ \Ad(H) $-invariant tensors on $ \mf{m} $ and the restriction of $ G $-invariant tensor fields to $ T_pM $. \\
While actually working on a more algebro-geometric problem (singularities of nilpotent varieties) and employing very different methods (e.g.~Nahm's differential equation), \textsc{Bielawski} obtained the following

\begin{theorem}[{\cite[Theorem 4]{Biel}}]
For every homogeneous 3-Sasakian manifold $ M = G/H $ with reductive decomposition $ \mf{g} = \mf{m} \oplus \mf{h} $, there is a natural decomposition $ \mf{m} = \mf{sp}(1) \oplus \mf{m}' $ such that the metric on $ M $ corresponds to an inner product on $ \mf{m} $ of the form
\[ (X,Y) \mapsto - c \, B(X_{\mf{sp}(1)}, Y_{\mf{sp}(1)}) - \frac{c}{2} \, B(X_{\mf{m}'},Y_{\mf{m}'}) \, ,  \]
where $ B $ denotes the Killing form of $ \mf{g} $ and $ c > 0 $ is some constant.
\end{theorem}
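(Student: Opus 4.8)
The plan is to pass to the Kobayashi--Nomizu description and convert every tensorial Sasakian identity into linear algebra on $\mathfrak{m}$. Since $G$ acts by $3$-Sasakian automorphisms, each $\phi\in G$ satisfies $\phi_*\xi_i=\xi_i$, so the Reeb fields are $G$-invariant and correspond to $\Ad(H)$-fixed vectors $Z_i:=\psi^{-1}(\xi_i|_{eH})\in\mathfrak{m}$; by $g(\xi_i,\xi_j)=\delta_{ij}$ they span a $3$-dimensional subspace. The $\Ad(H)$-invariance means $[\mathfrak{h},Z_i]=0$, i.e.\ $\mathfrak{h}$ centralises the $Z_i$. As $\mathfrak{h}\perp_B\mathfrak{m}$ and $B$ is nondegenerate, the identity $B([Z_i,Z_j],W)=-B(Z_i,\ad_W Z_j)=0$ for all $W\in\mathfrak{h}$ forces $[Z_i,Z_j]\in\mathfrak{m}$; comparing the Kobayashi--Nomizu bracket of invariant vector fields with $[\xi_i,\xi_j]=2\varepsilon_{ijk}\xi_k$ then yields $[Z_i,Z_j]=2\varepsilon_{ijk}Z_k$. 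Thus $\mathfrak{sp}(1):=\operatorname{span}(Z_1,Z_2,Z_3)$ is a subalgebra isomorphic to $\mathfrak{su}(2)$, and I set $\mathfrak{m}':=\mathfrak{sp}(1)^{\perp_g}$, which satisfies $\eta_i|_{\mathfrak{m}'}=0$ and is $\Ad(H)$- and $\varphi_i$-invariant.

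I would fix the constant on the vertical part first. Because $B|_{\mathfrak{sp}(1)}$ is $\ad(\mathfrak{sp}(1))$-invariant and the adjoint representation of $\mathfrak{su}(2)$ is irreducible, $B(Z_i,Z_j)=\lambda\,\delta_{ij}$ with $\lambda<0$, so $g(\xi_i,\xi_j)=\delta_{ij}$ gives $g|_{\mathfrak{sp}(1)}=-cB$ with $c=-1/\lambda>0$. The bridge to $\mathfrak{m}'$ is the standard formula for the exterior derivative of an invariant $1$-form, $d\eta_i(X,Y)=-\eta_i([X,Y]_{\mathfrak{m}})=-\langle Z_i,[X,Y]\rangle$; equating it with the Sasakian identity $d\eta_i(X,Y)=2g(X,\varphi_i Y)$ produces, for all $X,Y\in\mathfrak{m}$,
\[ 2\langle X,\varphi_i Y\rangle=-\langle Z_i,[X,Y]\rangle. \]
Evaluated on $\mathfrak{sp}(1)$ this recovers $\varphi_i|_{\mathfrak{sp}(1)}=\tfrac12\ad_{Z_i}$, consistent with $\varphi_i\xi_j=\xi_k$.

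Everything now reduces to identifying $\varphi_i$ with $\ad_{Z_i}$ on $\mathfrak{m}'$. Granting $\ad_{Z_i}^2=-\id$ on $\mathfrak{m}'$, $\mathfrak{su}(2)$-representation theory makes $\mathfrak{m}'$ a sum of $2$-dimensional representations: it is $\ad(\mathfrak{sp}(1))$-invariant, of a different isotypic type than the adjoint piece and hence $B$-orthogonal to $\mathfrak{sp}(1)$ (killing the cross term), and the $\ad_{Z_i}$ obey the quaternion relations $\ad_{Z_i}\ad_{Z_j}=\varepsilon_{ijk}\ad_{Z_k}$. Using $\mathfrak{sp}(1)\perp_B\mathfrak{m}'$ and $g|_{\mathfrak{sp}(1)}=-cB$, the displayed identity becomes $2\langle X,\varphi_i Y\rangle=-c\,B(X,\ad_{Z_i}Y)$ on $\mathfrak{m}'$. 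Introducing the positive $B$-self-adjoint $Q$ with $g(X,Y)=-\tfrac c2 B(QX,Y)$, this rearranges to $Q\varphi_i=\ad_{Z_i}$; feeding $\varphi_i=Q^{-1}\ad_{Z_i}$ into the quaternion-Hermitian relation $g(\varphi_iX,\varphi_iY)=g(X,Y)$ gives $\ad_{Z_i}Q^{-1}\ad_{Z_i}=-Q$ for each $i$. Comparing two of these three equations and using $A_iA_j=\varepsilon_{ijk}A_k$ shows $Q$ commutes with every $\ad_{Z_i}$, whence $Q^{-1}\ad_{Z_i}^2=-Q$ yields $Q=\id$. Therefore $\varphi_i=\ad_{Z_i}$ and $g=-\tfrac c2 B$ on $\mathfrak{m}'$, exactly the asserted normalisation.

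The one non-formal ingredient, and the main obstacle, is the hypothesis $\ad_{Z_i}^2=-\id$ on $\mathfrak{m}'$ --- that the Reeb $\mathfrak{sp}(1)$ acts on the horizontal space by spin $\tfrac12$ (equivalently, that it is the maximal-root $\mathfrak{sl}(2)$). This cannot follow from the purely algebraic Sasakian identities, since they are compatible with any eigenvalue of $\ad_{Z_i}^2$; it must be extracted from the genuine curvature condition $R(X,\xi_i)Y=\eta_i(Y)X-g(X,Y)\xi_i$. Concretely, for horizontal $X$ it gives $R(X,\xi_i)\xi_i=X$, and rewriting the homogeneous curvature tensor in terms of $\mathfrak{m}$-brackets (via $\nabla_X\xi_i=-\varphi_i X$ and the Killing identity $\nabla^2_{X,Y}\xi_i=-R(X,\xi_i)Y$) turns this ``$+1$'' Jacobi eigenvalue into $\ad_{Z_i}^2=-\id$ on $\mathfrak{m}'$. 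I would isolate this computation as the technical core. Finally I would remark that, a posteriori, the whole statement is contained in the explicit metric of \autoref{goal}, where $g=-\tfrac{1}{4(n+2)}B$ on $\mathfrak{k}\cong\mathfrak{sp}(1)$ and $g=-\tfrac{1}{8(n+2)}B$ on $\mathfrak{g}_1=\mathfrak{m}'$, so Bielawski's normalisation is subsumed by the construction with $c=\tfrac{1}{4(n+2)}$.
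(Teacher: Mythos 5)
Your overall architecture is sound and close in spirit to what the paper does in \Cref{univ} (pass to invariant data on $\mf{m}$, identify the Reeb directions with an $\mf{sp}(1)$-subalgebra, and show $\varphi_i=\ad_{Z_i}$ on the horizontal part, which forces the $1:2$ ratio of the two Killing-form blocks). But there is a genuine gap, and you have named it yourself: everything hinges on $\ad_{Z_i}^2=-\id$ on $\mf{m}'$, which you \emph{grant} rather than prove. This is not a routine "technical core" that can be deferred — it is precisely the statement that the root $\alpha$ determined by the Reeb fields is a \emph{maximal} root (equivalently, that $\mf{m}'$ is a sum of spin-$\tfrac12$ representations of the Reeb $\mf{sp}(1)$), and it is where essentially all the work in the paper lies: \Cref{univ} proves it via the root-space grading (Lemmata \ref{0comp} and \ref{pm2comp}, together with \autoref{maxroot}), and even then one case survives — the short root of $\mf{g}_2$, where $\ad_{Z_i}^2$ would have eigenvalue $-9$ on a four-dimensional piece of $\mf{m}'$ — whose exclusion requires the separate, genuinely non-formal argument of \Cref{G2} (\autoref{g2}, using \autoref{phi2} and explicit root vectors). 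Any proof of Bielawski's normalisation must confront this case; your sketch does not engage with it. Moreover, your proposed route for extracting $\ad_{Z_i}^2=-\id$ from the curvature identity $R(X,\xi_i)\xi_i=X$ has a circularity problem: on a reductive homogeneous space the Levi-Civita connection, and hence the curvature, is expressed through the Nomizu operator, which depends on the invariant metric — and the metric on $\mf{m}'$ is exactly the unknown you are trying to pin down. So the "$+1$ Jacobi eigenvalue" does not translate into a bracket identity until after the metric is known, which is what you are trying to prove.

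Two smaller gaps in the same direction: you assume $B$ is nondegenerate (indeed negative definite) and that the reductive complement satisfies $\mf{m}=\mf{h}^{\perp_B}$; the first requires proving that $\mf{g}$ is semisimple, which the paper does via \autoref{lieder} and the trivial-center proposition, and the second is a choice one must justify is compatible with an arbitrary given reductive decomposition. By contrast, the parts of your argument that are carried out — the commutator relations $[Z_i,Z_j]=2\varepsilon_{ijk}Z_k$, the vertical normalisation $g|_{\mf{sp}(1)}=-cB$, and the neat $Q$-argument showing that the quaternion relations force $Q=\id$ (hence $g=-\tfrac{c}{2}B$ on $\mf{m}'$) — are correct and give a cleaner endgame than the paper's root-space-by-root-space comparison of $L_1$ with $\ad_{X_1}$; but they only begin after the point where the real difficulty has been assumed away.
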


In 2020, the work of \textsc{Draper}, \textsc{Ortega} and \textsc{Palomo} gave a new hands-on description of homogeneous 3-Sasakian manifolds \cite{DOP}. Their study was based on the following

\begin{definition}[{\cite[Definition 4.1]{DOP}}] \label{3Sd}
A \emph{3-Sasakian datum} is a pair $ (\mf{g},\mf{h}) $ of real Lie algebras such that
\begin{enumerate}
\item $ \mf{g} = \mf{g}_0 \oplus \mf{g}_1 $ is a $ \Z_2 $-graded compact simple Lie algebra whose even part is a sum of two commuting subalgebras,
\[ \mf{g}_0 = \mf{sp}(1) \oplus \mf{h} \, ; \]
\item there exists an $ \mf{h}^\C $-module $ W $ such that the complexified $ \mf{g}_0^\C $-module $\mathfrak{g}_1^\C $ is isomorphic to the tensor product of the natural $ \mf{sp}(1)^\C = \mathfrak{sl}(2,\mathbb{C})$-module $\mathbb{C}^2$ and $W$:
\[ \mf{g}_1^\C \cong \C^2 \otimes W \, . \]
\end{enumerate}
\end{definition}

Their main result is the following

\begin{theorem}[{\cite[Theorem 4.2]{DOP}}] \label{dop}
Let $ M = G/H $ be a homogeneous space such that $ H $ is connected and the Lie algebras $ (\mf{g},\mf{h}) $ constitute a 3-Sasakian datum. Consider the reductive complement $ \mf{m} := \mf{sp}(1) \oplus \mf{g}_1 $ and let $ X_1, X_2, X_3 \in \mf{m} $ denote the standard basis of $ \mf{sp}(1) $ and $ \xi_1, \xi_2, \xi_3 $ the corresponding $ G $-invariant vector fields on $ M $. If $ g $ and $ \varphi_i $ are the Riemannian metric and endomorphism fields described in \autoref{goal} and $ \eta_i = g(\xi_i,\cdot) $, then the tuple $ (M,g,\xi_i,\eta_i, \varphi_i)_{i=1,2,3} $ constitutes a homogeneous 3-Sasakian structure.
\end{theorem}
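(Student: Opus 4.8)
The strategy is to transport every defining property of a $3$-Sasakian structure to the reductive complement $\mf{m} = \mf{k} \oplus \mf{g}_1$ via the Kobayashi--Nomizu dictionary, so that each becomes an $\Ad(H)$-equivariant algebraic identity provable from the bracket relations of the $\Z_2$-grading together with the isomorphism $\mf{g}_1^\C \cong \C^2 \otimes W$. Since the minimal axioms of a Sasakian structure are that $\xi$ be a unit Killing field with $\eta = g(\xi,\cdot)$, $\varphi = -\nabla^g\xi$ and $R(X,\xi)Y = \eta(Y)X - g(X,Y)\xi$, I would verify, for $i=1,2,3$: (A) that $g$ is a well-defined, positive definite, $\Ad(H)$-invariant metric and each $\varphi_i$ an $\Ad(H)$-equivariant endomorphism of $\mf{m}$; (B) the normalization $g(\xi_i,\xi_j) = \delta_{ij}$; (C) the relation $[\xi_i,\xi_j] = 2\varepsilon_{ijk}\xi_k$; (D) the identity $\varphi_i = -\nabla^g\xi_i$; (E) that $\xi_i$ is Killing; and (F) the Sasakian curvature identity. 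The remaining compatibility relations then follow formally from \Cref{Sdef} and \Cref{3Sdef}. Step (A) is immediate: $B$ is negative definite and $\ad$-invariant on the compact form $\mf{g}$, the subalgebra $\mf{h}$ commutes with $\mf{k}$ and preserves the grading (so each $\varphi_i = \ad_{X_i}$ is $\Ad(H)$-equivariant), each $\varphi_i$ maps $\mf{k}\to\mf{k}$ and $\mf{g}_1\to\mf{g}_1$, and it is $g$-skew because $\ad_{X_i}$ is $B$-skew on every block. For (B), a trace computation over $\mf{g} = \mf{k}\oplus\mf{h}\oplus\mf{g}_1$, using $\ad_{X_i}^2 = -\id$ on $\mf{g}_1$ (from the $\C^2$-factor) and $\dim_\R\mf{g}_1 = 4n$, gives $B(X_i,X_j) = -4(n+2)\delta_{ij}$; and (C) is inherited from the subalgebra $\mf{k}\subset\mf{m}$.

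For (D) I would invoke Nomizu's formula $\Lambda_X Y = \tfrac12[X,Y]_\mf{m} + U(X,Y)$ for the Levi-Civita connection, where the symmetric term is fixed by $2g(U(X,Y),Z) = g([Z,X]_\mf{m},Y) + g(X,[Z,Y]_\mf{m})$. Writing the metric as $-\lambda B$ on $\mf{k}$ and $-\mu B$ on $\mf{g}_1$ with $\lambda = \tfrac1{4(n+2)}$ and $\mu = \tfrac1{8(n+2)}$, the $\ad$-invariance of $B$ forces $U$ to vanish whenever $Y\in\mf{k}$, giving $\nabla_Y\xi_i = -\tfrac12[X_i,Y] = -\varphi_i(Y)$; and for $Y\in\mf{g}_1$ a short computation yields $U(Y,X_i) = -\tfrac{\lambda-\mu}{2\mu}[X_i,Y]$, so that $\nabla_Y\xi_i = -\tfrac{\lambda}{2\mu}[X_i,Y]$. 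This equals $-\varphi_i(Y) = -[X_i,Y]$ \emph{precisely because} $\lambda = 2\mu$, which is the structural reason the two metric coefficients differ by a factor of two while $\varphi_i$ is scaled by $\tfrac12$ on $\mf{k}$ but not on $\mf{g}_1$. Step (E) is then free: $\nabla\xi_i = -\varphi_i$ is $g$-skew and $G$-invariant, hence $\mathcal{L}_{\xi_i}g = 0$. I would also record here the quaternionic relations $\varphi_l\varphi_i = -\delta_{li}\id + \varepsilon_{lim}\varphi_m$ on $\mf{g}_1$, a direct translation of the Pauli relations for the $\C^2$-action, which make the $\varphi_i$ genuine almost complex structures on $\mf{g}_1$.

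The main obstacle is (F). Rather than computing the full curvature tensor, I would use the standard identity for a Killing field $K$, namely $(\nabla^2 K)(X,Y) = R(X,K)Y$, together with $\nabla\xi_i = -\varphi_i$, to obtain $R(X,\xi_i)Y = -(\nabla_X\varphi_i)(Y)$; and since $\varphi_i$ is $G$-invariant, $(\nabla_X\varphi_i)(Y) = [\Lambda_X,\varphi_i](Y)$. Under the identification $\xi_i\leftrightarrow X_i$ the curvature condition thus reduces to the algebraic identity
\[ [\Lambda_X,\varphi_i](Y) = g(X,Y)X_i - g(X_i,Y)X \qquad (X,Y\in\mf{m}), \]
to be checked on the four bidegrees. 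The cases with a $\mf{k}$-entry are governed by the $\mf{sp}(1)$ structure constants and by the commuting of the two actions. The decisive case is $X,Y\in\mf{g}_1$, where the right-hand side is $g(X,Y)X_i\in\mf{k}$ and one must control the $\mf{k}$-component of $[\mf{g}_1,\mf{g}_1]$. Here $\ad$-invariance of $B$ supplies the moment-map-type pairing $B([A,B]_\mf{k},X_l) = -B(A,\varphi_l B)$, whence $[A,B]_\mf{k} = \tfrac1{4(n+2)}\sum_l B(A,\varphi_l B)X_l$; feeding this into $[\Lambda_X,\varphi_i]Y = \tfrac12[X,\varphi_i Y]_\mf{k} - \tfrac14[X_i,[X,Y]_\mf{k}]$ and applying the quaternionic relations, the $\varphi_m$-contributions cancel by the cyclic symmetry of $\varepsilon_{ijk}$, leaving exactly $g(X,Y)X_i$. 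I expect this cancellation --- equivalently, that $\mf{g}_1$ is as an $\mf{sp}(1)$-module a sum of standard modules $\C^2$ whose brackets into $\mf{k}$ are given by the symplectic pairing --- to be the technical heart of the argument; once it is in place, the mixed and pure-$\mf{k}$ cases, together with (B), complete the verification that $(g,\xi_i,\eta_i,\varphi_i)_{i=1,2,3}$ is a homogeneous $3$-Sasakian structure.
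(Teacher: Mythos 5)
Your proposal is correct, and it is essentially the argument of the source this result is taken from: the paper does not prove \autoref{dop} itself but imports it from \cite{DOP}, and the Nomizu-operator reduction you use is exactly the route taken there --- indeed the connection values your formula for $U$ produces ($0$ on $\mf{k}\times\mf{g}_1$, $\tfrac12[X,Y]_{\mf{m}}$ on $\mf{k}\times\mf{k}$ and on $\mf{g}_1\times\mf{g}_1$, and $[X,Y]_{\mf{m}}$ on $\mf{g}_1\times\mf{k}$) are precisely the ones this paper quotes from Theorem 4.2 of \cite{DOP} in \Cref{AutM}. All the computations you indicate do check out: the trace identity $B(X_i,X_j)=-4(n+2)\delta_{ij}$, the relation $\nabla\xi_i=-\varphi_i$ hinging on $\lambda=2\mu$, the quaternion relations on $\mf{g}_1$ coming from the $\C^2\otimes W$ module structure, and the $\varepsilon$-cancellation in the $\mf{g}_1\times\mf{g}_1$ case of the curvature identity.
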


Furthermore, they conducted a case-by-case study to show that every compact simple Lie algebra admits a 3-Sasakian datum, thus providing a detailed analysis of one homogeneous 3-Sasakian structure (it is, at this point, not clear if there could be more than one such structure on a given space) on each of the diffeomorphism types discovered by \textsc{Boyer}, \textsc{Galicki} and \textsc{Mann}. \\

We finish this section by giving an overview of the structure of our proof of \autoref{goal}: In \Cref{const}, we first describe a way to construct a simply connected homogeneous 3-Sasakian manifold from a simple complex Lie algebra $ \mf{u} $ and a maximal root $ \alpha $ of $ \mf{u} $. More precisely, we first utilize the theory of root systems to generate a complexified version $ (\mf{u}, \mf{v}) $ of a 3-Sasakian datum. We then pass to the compact real forms $ (\mf{g},\mf{h}) $ to obtain a ``real'' 3-Sasakian datum in the sense of \autoref{3Sd} and apply \autoref{dop}. \\
For the converse argument in Sections \ref{univ} and \ref{G2}, we start with a simply connected homogeneous 3-Sasakian manifold $ M = G/H $, where $ G $ is a compact simply connected Lie group acting almost effectively and transitively on $ M $ via 3-Sasakian automorphisms. We prove that the Lie algebra $ \mf{g} $ and its complexification $ \mf{u} = \mf{g}^\C $ are simple and that the 3-Sasakian structure gives rise to a maximal root $ \alpha $ of $ \mf{u} $. We can therefore apply the previous construction and then show that this yields the same 3-Sasakian structure that we started with. \\
\Cref{AutM} completes the proof of \autoref{goal} by showing that no subgroup of $ \mathrm{Aut}_0(M) $ can act transitively. In particular, this proves that any two homogeneous $3$-Sasakian manifolds $M=G/H$, $M^\prime=G^\prime/H^\prime$ associated with two different simple complex Lie algebras $\mf{g}^\mathbb{C}\neq (\mf{g}^\prime)^\mathbb{C}$ are not isomorphic.

\section{Root System Preliminaries} \label{roots}

Later on, we will need certain basic facts about root systems, which we decided to collect in this section: Let $ \mf{u} $ be a (finite-dimensional) semisimple complex Lie algebra. Its Killing form is non-degenerate and thus gives rise to an isomorphism $ \mf{u} \to \mf{u}^* $ and a non-degenerate, symmetric bilinear form $\langle \cdot , \cdot \rangle $ on $ \mf{u}^* $. We fix a Cartan subalgebra $ \mf{c} \subset \mf{u} $ and denote the corresponding root system and root spaces by $ \Phi \subset \mf{c}^* $ and $ \mf{u}_\alpha \subset \mf{u} $ for $ \alpha \in \mf{c}^* $, respectively. \\
Each root $ \alpha \in \Phi $ has an associated coroot $ H_\alpha \in \mf{c} $ defined as the unique element of $ [\mf{u}_\alpha, \mf{u}_{-\alpha}] $ satisfying $ \alpha(H_\alpha) = 2 $. Furthermore, $ \mf{s}_\alpha := \mf{u}_\alpha \oplus \mf{u}_{-\alpha} \oplus [\mf{u}_\alpha, \mf{u}_{-\alpha}] $ is a subalgebra of $ \mf{u} $ which is isomorphic to $ \mf{sl}(2,\C) $. This isomorphism can be made explicit by choosing an \emph{$ \mf{sl}_2 $-triple}, i.e.~vectors $ X_\alpha \in \mf{u}_\alpha, Y_\alpha \in \mf{u}_{-\alpha} $ satisfying the commutation relations
\[ [H_\alpha,X_\alpha] = 2X_\alpha \, , \quad [H_\alpha,Y_\alpha] = -2Y_\alpha \, , \quad [X_\alpha,Y_\alpha] = H_\alpha \, . \]
Moreover, it can be shown that for any root $ \alpha \in \Phi $ and any linear form $ \beta \in \mf{c}^* $:
\[ c_{\alpha\beta} := \beta(H_\alpha) = \frac{2\langle \beta, \alpha\rangle}{\langle \alpha,\alpha \rangle} \, . \]
In particular, $ c_{\alpha\beta} = 0 $ if and only if $ \alpha $ and $ \beta $ are perpendicular to each other (with respect to $ \langle \cdot, \cdot \rangle $). In case $ \beta $ is also a root, $ c_{\alpha\beta} $ is an integer, which we will call the \emph{Cartan number of $ \beta $ with respect to $ \alpha $}. Fixing a root $ \alpha \in \Phi $, we can therefore decompose
\[ \mf{u} = \bigoplus_{k \in \Z} \mf{u}^{(k)} \, , \quad \textnormal{where} \quad \mf{u}^{(k)} := \bigoplus_{\substack{\beta \in \mf{c}^* \, , \\ c_{\alpha\beta} = k}} \mf{u}_\beta \, . \]
Since $ c_{\alpha\beta} $ is linear in $ \beta $, this decomposition is in fact a $ \Z $-grading, i.e.~$ [\mf{u}^{(k)}, \mf{u}^{(\ell)}] \subset \mf{u}^{(k+\ell)} $. We also note that $ \mf{u}^{(k)} $ is precisely the $ k $-eigenspace of $ \ad(H_\alpha) $. One can visualize this gra- ding using parallel copies of hyperplanes perpendicular to $ \alpha $, \mbox{e.g.~for the root system $ A_2 $:}

\begin{figure}[H]
\begin{center}
\begin{tikzpicture}[scale=2.2]
\draw[line width=1.5pt] (90:1cm)--(0,0)-- (270:1cm);
\draw (30:1cm)--(0,0)--(210:1cm);
\draw (150:1cm)--(0,0)--(330:1cm);

\fill  (0,0) circle (1pt);
\fill  (30:1cm) circle (1pt);
\fill  (90:1cm) circle (1pt);
\fill  (150:1cm) circle (1pt);
\fill  (210:1cm) circle (1pt);
\fill  (270:1cm) circle (1pt);
\fill  (330:1cm) circle (1pt);

\draw[dashed] (30:1cm)--(150:1cm);
\node at (82:1cm) {$\alpha$};
\node at (100:1.05cm) {$\mf{u}^{(2)}$};
\node at (152:1.175cm) {$\mf{u}^{(1)}$};
\node at (180:0.3cm) {$\mf{u}^{(0)}$};
\node at (202:1.1cm) {$\mf{u}^{(-1)}$};
\draw[dashed] (210:1cm)--(330:1cm);
\node at (256:1cm) {$\mf{u}^{(-2)}$};
\end{tikzpicture}
\end{center}
\vfill
\end{figure}

The structure of this grading is related to the notion of \emph{maximality} of the root $ \alpha $: \mbox{Assuming} we have chosen a set $ \Delta \subset \Phi $ of simple roots, we may introduce a partial order $ \leq $ on $ \Phi $ by stipulating that $ \alpha \leq \beta $ if and only if $ \beta - \alpha $ is a linear combination of roots in $ \Delta $ with non-negative coefficients. A root $ \alpha \in \Phi $ is called \emph{maximal} if there is a choice of simple roots such that there is no strictly larger root than $ \alpha $ with respect to the induced partial order. The following lemma was adapted from \cite[Theorem 4.2]{Wolf}:

\begin{lemma} \label{maxroot}
For any root $ \alpha \in \Phi $, the following statements are equivalent:
\begin{enumerate}
\item[i)] $ \alpha $ is maximal.
\item[ii)] $ |c_{\alpha\beta}| \leq 2 $ for all roots $ \beta \in \Phi $ and $ c_{\alpha\beta} = \pm 2 $ if and only if $ \beta = \pm \alpha $.
\end{enumerate}
\end{lemma}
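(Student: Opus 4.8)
The plan is to prove both implications with nothing more than root strings, Weyl reflections, and the partial order, after first reducing to the irreducible case. I would begin by noting that both conditions localize to the irreducible component $\Phi_0 \subset \Phi$ containing $\alpha$: any root $\beta$ lying in another component is perpendicular to $\alpha$, hence $c_{\alpha\beta} = 0$, and is incomparable to $\alpha$ in every partial order, so it affects neither maximality nor condition ii). Thus I may assume $\Phi$ irreducible. The two ingredients I would rely on are: (a) for all roots $\alpha,\beta$ the reflected element $s_\alpha(\beta) = \beta - c_{\alpha\beta}\alpha$ is again a root; and (b) if $\alpha$ is the highest root of a positive system $\Phi^+$, then $-\alpha \le \delta \le \alpha$ for every root $\delta$, since $-\alpha$ is then the lowest root.

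For i) $\Rightarrow$ ii), assume $\alpha$ is maximal, so that $\alpha$ is the highest root for some choice $\Delta$ of simple roots. Fix a root $\beta$ and write $c = c_{\alpha\beta}$. Applying (b) to the root $s_\alpha(\beta) = \beta - c\alpha$ gives $\beta - c\alpha \ge -\alpha$, i.e. $\beta \ge (c-1)\alpha$, while (b) applied to $\beta$ itself gives $\beta \le \alpha$. If $c \ge 2$, then $(c-1)\alpha \ge \alpha$ because $\alpha > 0$, so the chain $\alpha \ge \beta \ge (c-1)\alpha \ge \alpha$ forces equality throughout; antisymmetry of $\le$ then yields $(c-2)\alpha = 0$, hence $c = 2$ and $\beta = \alpha$. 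Replacing $\beta$ by $-\beta$ disposes of the case $c \le -2$. Together these say precisely $|c_{\alpha\beta}| \le 2$, with equality only for $\beta = \pm\alpha$, which is statement ii).

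For the converse ii) $\Rightarrow$ i), I would choose a positive system $\Phi^+$ for which $\alpha$ is dominant; such a choice exists because $\alpha$ lies in the closure of some Weyl chamber. Dominance yields $\langle \alpha, \alpha_i \rangle \ge 0$, hence $c_{\alpha\alpha_i} \ge 0$, for every simple root $\alpha_i$, and also forces $\alpha \in \Phi^+$. I then claim $\alpha + \alpha_i \notin \Phi$ for all $i$. Indeed, if $\gamma := \alpha + \alpha_i$ were a root (the case $\alpha_i = \alpha$ being trivial since $2\alpha \notin \Phi$), then by linearity $c_{\alpha\gamma} = 2 + c_{\alpha\alpha_i}$; by ii) we have $c_{\alpha\alpha_i} \in \{0,1\}$, so $c_{\alpha\gamma} \in \{2,3\}$, and both are impossible: $c_{\alpha\gamma} = 3$ violates the bound $|c_{\alpha\gamma}| \le 2$, while $c_{\alpha\gamma} = 2$ would force $\gamma = \pm\alpha$ by ii), i.e. $\alpha_i \in \{0, -2\alpha\}$. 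Finally, the standard fact that in an irreducible root system the only positive root annihilated by the addition of every simple root is the unique highest root lets me conclude that $\alpha$ is highest, hence maximal.

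The forward direction is essentially immediate once the order bounds in (b) are recorded; the real content sits in the converse. I expect the main obstacle there to be the very last step: converting the purely numerical no-addition condition $\alpha + \alpha_i \notin \Phi$ into the genuinely order-theoretic conclusion that $\alpha$ dominates every root. This is exactly where irreducibility is indispensable, since one argues that a root killed by adding each simple root is a maximal element of the partial order and that an irreducible system has a \emph{unique} such element — two maximal roots $\beta, \beta'$ would satisfy $\langle \beta, \beta' \rangle > 0$, producing a root $\beta - \beta'$ that compares them and contradicts maximality. I would either cite this uniqueness as standard (e.g. Humphreys, Lemma 10.4.A) or reprove it in the one line just indicated.
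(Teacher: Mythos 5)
Your direction i) $\Rightarrow$ ii) is correct but leans on heavier input than necessary: to pass from ``maximal'' to ``highest'' you need the full strength of the uniqueness-and-domination statement for irreducible systems (Humphreys, Lemma 10.4.A), after first reducing to an irreducible component. Granting that citation, the reflection argument $s_\alpha(\beta)=\beta-c_{\alpha\beta}\alpha$ together with the order bounds is fine. The paper gets this implication more cheaply from the root-string formula $c_{\alpha\beta}=p-q$: if $c_{\alpha\beta}\geq 2$ and $\beta\neq\alpha$, then $\alpha-\beta$ and $2\alpha-\beta$ are roots, maximality forces $\alpha-\beta$ to be positive, and then $2\alpha-\beta\geq\alpha$ forces $2\alpha-\beta=\alpha$, a contradiction; no domination fact and no irreducibility are needed.

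The genuine gap is in ii) $\Rightarrow$ i), at the step you yourself single out as the crux. You claim that a positive root $\alpha$ with $\alpha+\alpha_i\notin\Phi$ for every simple root $\alpha_i$ must be the highest root, and you propose to justify this by the uniqueness of maximal elements of the root poset. But uniqueness does not yield this implication: what is needed is precisely that ``no simple root can be added'' implies ``no root lies strictly above $\alpha$'', i.e.\ that the existence of a root $\beta>\alpha$ forces some $\alpha+\alpha_i$ to be a root. That is a chain lemma (between comparable roots one can ascend by adding one simple root at a time, staying inside $\Phi$), which neither your one-line uniqueness argument nor Humphreys 10.4.A establishes; it is true, but requires its own proof --- e.g.\ a minimal-height induction using that two roots with negative inner product sum to a root, or an appeal to the irreducibility of the adjoint representation. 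Note also that once this implication (A) is granted, you are already done, since ``maximal element for this choice of $\Delta$'' is exactly statement i); the uniqueness step (B) you do sketch is redundant, while the step (A) you skip is the entire content. The irony is that you never needed the detour: you have already established dominance, $c_{\alpha\alpha_i}\geq 0$ for all simple $\alpha_i$, and for any root $\beta\geq\alpha$, writing $\beta-\alpha=\sum_i\lambda_i\alpha_i$ with $\lambda_i\geq 0$, linearity gives $c_{\alpha\beta}=c_{\alpha\alpha}+\sum_i\lambda_i c_{\alpha\alpha_i}\geq 2$, whence hypothesis ii) forces $\beta=\alpha$ immediately. This is exactly the paper's proof of this direction; it needs no irreducibility and no highest-root theory, and it closes your gap in one line.
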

\begin{proof} $i) \implies ii)$: It is well-known that for $ \beta \in \Phi \setminus \{\pm\alpha\} $, the Cartan number is given by $ c_{\alpha\beta} = p-q $, where $ p,q \in \N_0 $ are the greatest non-negative integers such that $ \beta + r \alpha \in \Phi $ for every $ r \in \{-p, \ldots, q\} $ \cite[Proposition 2.29]{Knap}. Suppose there was some $ \beta \in \Phi \setminus \{\alpha\} $ such that $ c_{\alpha\beta} \geq 2 $. Then $ p \geq 2 $, so that \mbox{$ \beta - \alpha, \beta -2\alpha \in \Phi $} and their negatives $ \alpha - \beta, 2\alpha -\beta \in \Phi $ are roots. In fact, $ \alpha - \beta $ has to be a \emph{non-negative} linear combination of simple roots (for some choice of simple roots with respect to which $ \alpha $ is maximal), since otherwise $ \beta > \alpha $. But then $ 2\alpha -\beta \geq \alpha $ and maximality of $ \alpha $ would imply $ 2 \alpha - \beta = \alpha $, i.e.~$ \beta = \alpha $. For $ \beta \in \Phi \setminus \{ - \alpha\} $ such that $ c_{\alpha\beta} \leq -2 $, we apply this argument to $ -\beta $. \\
$ ii) \implies i)$: We may choose a set of simple roots $\Delta $ in such a way that $ c_{\alpha\beta} \geq 0 $ for all $ \beta \in \Delta $. This can be achieved by first choosing positive roots using a slight perturbation of the hyperplane perpendicular to $ \alpha $. Let $ \beta \in \Phi $ such that $ \beta \geq \alpha $, i.e.~$ \beta-\alpha = \sum_{i=1}^n \lambda_i \alpha_i $, where $ \lambda_i \geq 0 $ and $ \alpha_i \in \Delta $. Then,
\[ c_{\alpha\beta} = c_{\alpha\alpha} + c_{\alpha(\beta-\alpha)} = 2 + \sum_{i=1}^n \lambda_i \underbrace{c_{\alpha\alpha_i}}_{\geq 0} \geq 2 \, . \]
Hypothesis $ ii) $ then implies $ \beta = \alpha $, so that $ \alpha $ is maximal.
\end{proof}

Finally, we remark that in an \emph{irreducible} root system $ \Phi $, the maximal root is unique up to the action of the Weyl group: This follows because in an irreducible root system, the maximal root is uniquely determined after choosing simple roots, and any two choices of simple roots can be mapped to each other by the Weyl group.

\section{Constructing Homogeneous 3-Sasakian Manifolds from Simple Lie Algebras} \label{const}

Our goal in this section is the following construction:

\begin{theorem} \label{constthm}
Let $ \mf{u} $ be a simple complex Lie algebra, $ \alpha $ a maximal root in its root system, $ \mf{g} $ the compact real form of $ \mf{u} $ and $ \mf{k} \cong \mf{sp}(1) $ the compact real form of the subalgebra $ \mf{s}_\alpha = \mf{u}_\alpha \oplus \mf{u}_{-\alpha} \oplus [\mf{u}_\alpha,\mf{u}_{-\alpha}] \cong \mf{sl}(2,\C) $. Let $ G $ denote the simply connected Lie group with Lie algebra $ \mf{g} $, $ K $ the connected subgroup with Lie algebra $ \mf{k} $ and $ H = (C_G(K))_0 $ the identity component of the centralizer $ C_G(K) $ of $ K $ in $ G $. Then, the simply connected homogeneous space $ M = G/H $ admits a homogeneous 3-Sasakian structure whose tensors are given by \autoref{goal}. All possible choices of a maximal root lead to isomorphic 3-Sasakian manifolds.
\end{theorem}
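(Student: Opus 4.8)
The plan is to exhibit the pair of compact real forms $(\mf{g},\mf{h})$ as a $3$-Sasakian datum in the sense of \Cref{3Sd} and then invoke \Cref{dop}, which by its very statement produces the tensors recorded in \autoref{goal}. Everything hinges on the $\Z$-grading $\mf{u}=\bigoplus_k\mf{u}^{(k)}$ by the eigenvalues of $\ad(H_\alpha)$ from \Cref{roots}. Since $\alpha$ is maximal, \Cref{maxroot} gives $|c_{\alpha\beta}|\le 2$ for all roots $\beta$, with equality only for $\beta=\pm\alpha$; hence the grading is concentrated in degrees $-2,\dots,2$, the extreme pieces $\mf{u}^{(\pm2)}=\mf{u}_{\pm\alpha}$ are one-dimensional, and $\mf{s}_\alpha=\mf{u}^{(2)}\oplus\C H_\alpha\oplus\mf{u}^{(-2)}$. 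First I would identify $\mf{v}=\ker\alpha\oplus\bigoplus_{\beta\perp\alpha}\mf{u}_\beta$ with the centralizer $\mf{z}_\mf{u}(\mf{s}_\alpha)$: lying in $\mf{u}^{(0)}$ encodes commuting with $H_\alpha$, and the maximality bound forces $\alpha\pm\beta$ to fail to be a root for every $\beta\perp\alpha$, so the root vectors of $\mf{v}$ annihilate $X_\alpha$ and $Y_\alpha$. This yields $\mf{u}^{(0)}=\C H_\alpha\oplus\mf{v}$ and, more importantly, that $\mf{g}_0^\C:=\mf{u}^{(-2)}\oplus\mf{u}^{(0)}\oplus\mf{u}^{(2)}=\mf{s}_\alpha\oplus\mf{v}$ is a direct sum of two commuting subalgebras.

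Next I would pass to the compact real form in a grading-compatible way. The element $\sigma:=\Ad(\exp(\pi i H_\alpha))$ lies in $\Aut(\mf{g})$ because $iH_\alpha\in\mf{g}$, and it acts on $\mf{u}^{(k)}$ as $(-1)^k$; hence $\sigma$ is an involution whose fixed-point algebra and $(-1)$-eigenspace give a $\Z_2$-grading $\mf{g}=\mf{g}_0\oplus\mf{g}_1$ with $\mf{g}_0^\C=\mf{u}^{(-2)}\oplus\mf{u}^{(0)}\oplus\mf{u}^{(2)}$ and $\mf{g}_1^\C=\mf{u}^{(-1)}\oplus\mf{u}^{(1)}$. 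Intersecting with $\mf{g}$ turns the previous splitting into $\mf{g}_0=\mf{k}\oplus\mf{h}$ with $\mf{k}\cong\mf{sp}(1)$ and $\mf{h}$ commuting, which is condition (1). For condition (2) I would note that $\ad(H_\alpha)$ has only the eigenvalues $\pm1$ on $\mf{g}_1^\C$; by the representation theory of $\mf{s}_\alpha\cong\mf{sl}(2,\C)$ every irreducible summand must therefore be the standard module $\C^2$, so $\mf{g}_1^\C\cong\C^2\otimes W$ with multiplicity space $W\cong\mf{u}^{(1)}$, on which the commuting algebra $\mf{v}=\mf{h}^\C$ acts. Thus $(\mf{g},\mf{h})$ is a $3$-Sasakian datum.

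To apply \Cref{dop} I still need $H$ connected with Lie algebra $\mf{h}$: by construction $H=(C_G(K))_0$ is connected, and since commuting is a complex-linear condition, $\mathrm{Lie}(C_G(K))=\mf{z}_\mf{g}(\mf{k})=\mf{g}\cap\mf{z}_\mf{u}(\mf{s}_\alpha)=\mf{g}\cap\mf{v}=\mf{h}$. The $B$-orthogonality of distinct $\ad(H_\alpha)$-eigenspaces (unless the eigenvalues are opposite) shows $\mf{g}_1=(\mf{h}\oplus\mf{k})^{\perp_B}$ and $\mf{m}=\mf{k}\oplus\mf{g}_1=\mf{h}^{\perp_B}$, matching the reductive complement of both \Cref{dop} and \autoref{goal}. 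Applying \Cref{dop} then yields the claimed $3$-Sasakian structure, and the fibration $H\to G\to M$ together with $\pi_1(G)=\pi_0(H)=0$ shows that $M$ is simply connected.

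Finally, for independence of the choice of maximal root, recall that the root system of the simple algebra $\mf{u}$ is irreducible, so any two maximal roots $\alpha,\alpha'$ lie in a single Weyl orbit. A representative $n\in N_G(T)$ of the relevant Weyl element realizes this via the inner automorphism $\Ad(n)$ of $\mf{g}$, which carries $\mf{s}_\alpha,\mf{v}$ to $\mf{s}_{\alpha'},\mf{v}'$ and hence $K,H$ to $K',H'$; the induced diffeomorphism $G/H\to G/H'$ intertwines all structure tensors, since these are built $\Ad$-invariantly from $B$ and the grading (after transporting the standard basis of $\mf{k}$ by $\Ad(n)$), so it is a $3$-Sasakian isomorphism. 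I expect the main obstacle to be the second paragraph: pinning down the $\mf{sl}_2$-module type of $\mf{g}_1^\C$ and ensuring the compact real form is simultaneously compatible with the grading and with the splitting $\mf{s}_\alpha\oplus\mf{v}$, as this is exactly where the maximality of $\alpha$ is indispensable.
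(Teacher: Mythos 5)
Your proposal is correct, and its overall skeleton is the same as the paper's: both proofs reduce \autoref{constthm} to producing a 3-Sasakian datum and then citing \autoref{dop}, both use the maximality bound of \autoref{maxroot} to confine the grading to degrees $-2,\dots,2$ and to identify $\mf{v}$ with the centralizer of $\mf{s}_\alpha$ (whence $\mathrm{Lie}(C_G(K))=\mf{h}$), and your final Weyl-group paragraph is essentially the paper's closing lemma of \Cref{const}. Where you genuinely diverge is in verifying condition (2) of \autoref{c3Sd}: the paper writes down the explicit intertwiner $\Psi(X) = (1,0)\otimes X^{(1)} + (0,1)\otimes[X_\alpha,X^{(-1)}]$ onto $\C^2\otimes\mf{u}^{(1)}$ and checks equivariance under $\mf{v}$, $X_\alpha$, $Y_\alpha$, $H_\alpha$ by hand via the Jacobi identity, whereas you argue abstractly that $\ad(H_\alpha)$ has only the eigenvalues $\pm1$ on $\mf{u}_1$, so by $\mf{sl}_2$-weight theory every irreducible $\mf{s}_\alpha$-summand is the standard module and the isotypic decomposition $\mf{u}_1\cong\C^2\otimes\mathrm{Hom}_{\mf{s}_\alpha}(\C^2,\mf{u}_1)$ gives the tensor factorization, with the commuting algebra $\mf{v}$ acting on the multiplicity space $W\cong\mf{u}^{(1)}$. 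Your route is shorter and explains \emph{why} maximality forces the module structure; the paper's computation buys concrete formulas. In two places your write-up is in fact more careful than the paper: realizing $\mf{v}$ as $C_\mf{u}(\mf{s}_\alpha)$ makes its closure under brackets automatic (the paper checks this root by root), and the inner involution $\Ad(\exp(\pi i H_\alpha))$ proves that the $\Z_2$-grading and the splitting $\mf{u}_0=\mf{s}_\alpha\oplus\mf{v}$ are compatible with the compact real form, a compatibility that \autoref{constrem} asserts without argument. One small point to tighten: eigenvalue bookkeeping alone gives $B(\mf{u}^{(k)},\mf{u}^{(\ell)})=0$ only for $k+\ell\neq 0$, which suffices for $\mf{g}_1\perp_B(\mf{h}\oplus\mf{k})$ but not for the degree-zero pairing; to get $\mf{m}=\mf{h}^{\perp_B}$ you also need $B(H_\alpha,\ker\alpha)=0$, which follows from $B(H_\alpha,\cdot)|_{\mf{c}}$ being proportional to $\alpha$.
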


\begin{definition}\label{c3Sd}
A \emph{complex 3-Sasakian datum} is a pair $(\mf{u},\mf{v})$ of complex Lie algebras such that
\begin{enumerate}
\item $ \mf{u} = \mf{u}_0 \oplus \mf{u}_1 $ is a $ \Z_2 $-graded simple Lie algebra whose even part is a sum of two commuting subalgebras,
\[ \mf{u}_0=\mf{v}\oplus\mathfrak{sl}(2,\mathbb{C}) \, ; \]
\item there exists a $ \mf{v} $-module $ W $ such that $\mf{u}_1\cong\mathbb{C}^2\otimes W$ as $ \mf{u}_0 $-modules.
\end{enumerate}
\end{definition}

\begin{remark} \label{constrem}
We formulated the above definition in the given way because it allows us to branch off into two cases: Our primary interest in this article will be to consider the \emph{compact} real forms $ (\mf{g},\mf{h}) $ of $ (\mf{u}, \mf{v}) $ which then form a 3-Sasakian datum in the sense of \autoref{3Sd}. On the other hand, one may also look at the real form $(\mathfrak{g}^*,\mathfrak{h})$ of $ (\mf{u}, \mf{v}) $ given by $\mathfrak{g}^*=\mathfrak{h}\oplus\mathfrak{s}_\alpha\oplus i\mathfrak{g}_1$, to obtain a \emph{generalized 3-Sasakian datum} in the sense of \cite{ADS}. These give rise to homogeneous negative 3-$ (\alpha,\delta) $-Sasakian manifolds by a construction similar to \autoref{dop}, compare \cite[Theorem 3.1.1]{ADS}.
\end{remark}

\begin{proposition}\label{rootconst}
Let $ \mf{u} $ be a simple complex Lie algebra and $ \alpha \in \Phi $ a maximal root in its root system. Set $ \Phi_0 := \{\beta \in \Phi \, | \, c_{\alpha\beta} = 0\} $ as well as
\[ \mf{v} := \ker \alpha \oplus \bigoplus_{\beta \in \Phi_0} \mf{u}_\beta \, . \]
Then, $(\mf{u},\mf{v})$ is a complex 3-Sasakian datum.
\end{proposition}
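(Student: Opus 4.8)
The plan is to verify the two defining conditions of \Cref{c3Sd} directly using the $\Z$-grading $\mf{u} = \bigoplus_k \mf{u}^{(k)}$ induced by the maximal root $\alpha$, together with the characterization of maximality from \Cref{maxroot}. The key structural input is that, since $\alpha$ is maximal, \Cref{maxroot} gives $|c_{\alpha\beta}| \leq 2$ for all $\beta \in \Phi$, with $c_{\alpha\beta} = \pm 2$ precisely for $\beta = \pm\alpha$. Hence the grading collapses to just five pieces $\mf{u}^{(-2)}, \mf{u}^{(-1)}, \mf{u}^{(0)}, \mf{u}^{(1)}, \mf{u}^{(2)}$, where $\mf{u}^{(\pm 2)} = \mf{u}_{\pm\alpha}$ are one-dimensional.

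First I would set up the $\Z_2$-grading needed for condition~(1). The natural move is to define $\mf{u}_0 := \mf{u}^{(0)} \oplus \mf{u}^{(2)} \oplus \mf{u}^{(-2)}$ and $\mf{u}_1 := \mf{u}^{(1)} \oplus \mf{u}^{(-1)}$. Since the $\Z$-grading satisfies $[\mf{u}^{(k)},\mf{u}^{(\ell)}] \subset \mf{u}^{(k+\ell)}$ and all superscripts lie in $\{-2,\dots,2\}$, reducing the degree modulo $2$ shows this is a genuine $\Z_2$-grading (any bracket landing in degree $\pm 3, \pm 4$ automatically vanishes, which is exactly what forces the $\Z_2$-compatibility). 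Next I would identify $\mf{s}_\alpha = \mf{u}_\alpha \oplus \mf{u}_{-\alpha} \oplus [\mf{u}_\alpha,\mf{u}_{-\alpha}] = \mf{u}^{(2)} \oplus \mf{u}^{(-2)} \oplus \C H_\alpha \cong \mf{sl}(2,\C)$, which is standard. Then I would show $\mf{u}^{(0)} = \mf{v} \oplus \C H_\alpha$ where $\mf{v} = \ker\alpha \oplus \bigoplus_{\beta\in\Phi_0}\mf{u}_\beta$: indeed $\mf{u}^{(0)}$ is the sum of $\ker\alpha \subset \mf{c}$ together with all root spaces $\mf{u}_\beta$ for $c_{\alpha\beta}=0$, i.e.\ for $\beta \perp \alpha$, and $\C H_\alpha$ is the complement of $\ker\alpha$ inside $\mf{c}$. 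To get $\mf{u}_0 = \mf{v} \oplus \mf{sl}(2,\C)$ as a \emph{direct sum of commuting subalgebras}, I would check that $\mf{v}$ is a subalgebra (closed under bracket since $\Phi_0$ is closed: if $\beta,\gamma \perp \alpha$ and $\beta+\gamma$ is a root, then $\beta+\gamma \perp \alpha$) and that $[\mf{v},\mf{s}_\alpha]=0$. The commuting relation reduces to checking $[\ker\alpha \oplus \mf{u}_\beta, \, \mf{u}_{\pm\alpha} \oplus \C H_\alpha] = 0$ for $\beta \perp \alpha$; the $\ker\alpha$ part commutes with $H_\alpha$ and kills $\mf{u}_{\pm\alpha}$ (as $\alpha$ vanishes on $\ker\alpha$), while $[\mf{u}_\beta,\mf{u}_{\pm\alpha}] \subset \mf{u}_{\beta\pm\alpha}$ must vanish because $\beta\pm\alpha$ cannot be a root when $\beta\perp\alpha$ and $\alpha$ is maximal (the $\alpha$-string through $\beta$ has $p=q=0$, as $c_{\alpha\beta}=0$).

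For condition~(2), I would take $\mf{u}_1 = \mf{u}^{(1)} \oplus \mf{u}^{(-1)}$ as a module over $\mf{u}_0 = \mf{v} \oplus \mf{sl}(2,\C)$ and exhibit the factorization $\mf{u}_1 \cong \C^2 \otimes W$. The $\mf{sl}(2,\C)$-action is via $\ad$, and since $\ad(H_\alpha)$ acts as $+1$ on $\mf{u}^{(1)}$ and $-1$ on $\mf{u}^{(-1)}$, each irreducible $\mf{sl}(2,\C)$-submodule generated within $\mf{u}_1$ has highest weight $1$, hence is a copy of the standard module $\C^2$; this uses maximality again to exclude weights $\pm 2$ inside $\mf{u}_1$. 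I would then set $W := \mf{u}^{(1)}$ viewed as a $\mf{v}$-module (the bracket with $\mf{v}$ preserves the $\ad(H_\alpha)$-eigenspaces since $\mf{v}$ commutes with $H_\alpha$), and build the isomorphism $\C^2 \otimes W \to \mf{u}_1$ by sending the standard basis of $\C^2$ to the two weight lines and using $\ad(Y_\alpha)$ (lowering) to map $\mf{u}^{(1)}$ isomorphically onto $\mf{u}^{(-1)}$. Commutativity of the $\mf{v}$- and $\mf{sl}(2,\C)$-actions makes this a genuine $\mf{u}_0$-module map.

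I expect the main obstacle to be condition~(2): cleanly establishing that $\mf{u}_1$ really is the tensor product $\C^2 \otimes W$ as a module over the product algebra, rather than merely as a vector space. The delicate points are verifying that $\ad(Y_\alpha) : \mf{u}^{(1)} \to \mf{u}^{(-1)}$ is a $\mf{v}$-module isomorphism (which follows from $[\mf{v}, Y_\alpha]=0$, so $\ad(Y_\alpha)$ intertwines the $\mf{v}$-actions, and from injectivity of lowering on highest-weight vectors in $\mf{sl}(2,\C)$-representations of highest weight $1$), and confirming that no higher $\mf{sl}(2,\C)$-isotypic components appear—precisely the content of maximality. Everything else is bookkeeping with the grading, so the proof hinges on repeatedly invoking \Cref{maxroot} to control which sums $\beta \pm \alpha$ can be roots.
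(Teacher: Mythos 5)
Your plan is correct and follows essentially the same route as the paper's proof: the same $\Z_2$-grading built from the pieces $\mf{u}^{(k)}$, the same splitting $\mf{u}_0 = \mf{s}_\alpha \oplus \mf{v}$ with commutativity forced by maximality, and the same tensor factorization $\mf{u}_1 \cong \C^2 \otimes \mf{u}^{(1)}$ via a raising/lowering operator (you lower with $\ad_{Y_\alpha}$ where the paper raises with $\ad_{X_\alpha}$ --- equivalent up to relabeling). The only detail to add when executing the subalgebra check for $\mf{v}$ is the case $\beta + \gamma = 0$, where the bracket lands in the Cartan subalgebra rather than a root space: there $[\mf{u}_\beta,\mf{u}_{-\beta}] = \langle H_\beta \rangle \subset \ker\alpha$ because $\alpha(H_\beta) = c_{\beta\alpha} = 0$ for $\beta \in \Phi_0$.
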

\begin{proof}  \vspace{-.5cm} Using the $ \Z $-grading from \Cref{roots}, we let
\[ \mf{u}_0 := \mf{u}^{(-2)} \oplus \mf{u}^{(0)} \oplus \mf{u}^{(2)} \, , \quad \mf{u}_1 := \mf{u}^{(-1)} \oplus \mf{u}^{(1)} \, . \]
Since $ |c_{\alpha\beta}| \leq 2 $ for all $ \beta \in \Phi $ by \autoref{maxroot}, we have $ \mf{u} = \mf{u}_0 \oplus \mf{u}_1 $. Because $ \mf{u}_0 $ and $ \mf{u}_1 $ are comprised of the $ \mf{u}^{(k)} $ with even and odd $ k $ respectively, this decomposition is in fact a $ \Z_2 $-grading. We claim that
\[ \mf{u}_0 = \mf{s}_\alpha \oplus \mf{v} \]
as a direct sum of Lie algebras, where $ \mf{s}_\alpha = \mf{u}_\alpha \oplus \mf{u}_{-\alpha} \oplus [\mf{u}_\alpha, \mf{u}_{-\alpha}] $. Since $ c_{\alpha\beta} = \pm 2 $ if and only if $ \beta = \pm \alpha $, we have the following vector space decompositions:
\[ \mf{u}_0 = \mf{u}_\alpha \oplus \mf{u}_{-\alpha} \oplus \mf{c} \oplus \bigoplus_{\beta \in \Phi_0} \mf{u}_\beta \, , \quad \mf{c} = [\mf{u}_\alpha,\mf{u}_{-\alpha}] \oplus \ker \alpha \, . \]
In order to show that $ \mf{v} $ is indeed a sub\emph{algebra} of $ \mf{u} $, note that $ [\mf{u}_\beta, \mf{u}_\gamma] \subset \mf{u}_{\beta+\gamma} $ for any $ \beta, \gamma \in \Phi_0 $. Now if $ \beta + \gamma $ is a root, then $ \beta + \gamma \in \Phi_0 $, so $ \mf{u}_{\beta + \gamma} \subset \mf{v} $. If $ \beta + \gamma $ is not a root and not zero, then $ \mf{u}_{\beta+\gamma} = 0 \subset \mf{v} $. If $ \beta + \gamma = 0 $, then $ [\mf{u}_\beta, \mf{u}_{-\beta}] = \langle H_\beta \rangle \subset \ker \alpha $ because $ \beta \in \Phi_0 $. To check that $ \mf{s}_\alpha $ and $ \mf{v} $ commute, we recall that $ \mf{v} $ is a subset of $ \mf{u}^{(0)} = \ker \ad_{H_\alpha} $. For $ \beta \in \Phi_0 $, we have $ [\mf{u}_{\pm \alpha}, \mf{u}_\beta] \subset \mf{u}_{\pm \alpha+\beta} \subset \mf{u}^{(\pm 2)} = \mf{u}_{\pm \alpha} $, so $ \mf{u}_{\pm \alpha} $ and $ \mf{u}_\beta $ commute. \\
We now verify the second condition from \autoref{c3Sd} for the $ \mf{v} $-module $ W := \mf{u}^{(1)} $: We choose an $ \mf{sl}_2 $-triple $ (X_\alpha, Y_\alpha, H_\alpha) $ and consider the following linear map:
\begin{align*} \setlength{\tabcolsep}{2pt} \begin{tabular}{c c c c c c c} $ \Psi: $ & $ \mf{u}_1 $ & $=$ & $ \mf{u}^{(-1)} \oplus \mf{u}^{(1)} $ & $\to $ & $ \C^2 \otimes W $ & , \\
& $ X $ & $=$ & $ X^{(-1)} + X^{(1)} $ & $ \mapsto $ & $ (1,0) \otimes X^{(1)} + (0,1) \otimes [X_\alpha,X^{(-1)}] $ & .
\end{tabular} \end{align*}
If $ \beta \in \Phi $ such that $ c_{\alpha\beta} = -1 $, then $ \beta + \alpha $ must be a root and $ [\mf{u}_\alpha, \mf{u}_\beta] = \mf{u}_{\alpha+\beta} $. This shows that $ \ad_{X_\alpha}: \mf{u}_\beta \to \mf{u}_{\beta+\alpha} $ and, by extension, $ \Psi $ are linear isomorphisms. It remains to be shown that $ \Psi $ preserves the $ \mf{u}_0 $-module structure: \\
If $ Z \in \mf{v} \subset \mf{u}^{(0)} $, then $ \ad_Z $ preserves the decomposition \mbox{$ \mf{u}_1 = \mf{u}^{(-1)} \oplus \mf{u}^{(1)} $.} Since $ \mf{v} $ and $ \mf{s}_\alpha $ are commuting subalgebras of $ \mf{u} $, so are their respective adjoint subrepresentations,
\begin{align*}
\Psi ([Z,X]) &= (1,0) \otimes [Z,X^{(1)}] + (0,1) \otimes [X_\alpha, [Z,X^{(-1)}]] \\
&= (1,0) \otimes [Z,X^{(1)}] + (0,1) \otimes [Z, [X_\alpha,X^{(-1)}]] = Z \cdot \Psi(X) \, . 
\end{align*}
Here, $ \cdot $ denotes the adjoint representation of $ \mf{v} $ on $ W $, while in the following equations it will signify the standard representation of $ \mf{sl}(2,\C) $ on $ \C^2 $. Finally, we check the representation of the basis $ (X_\alpha, Y_\alpha, H_\alpha) $ of $ \mf{s}_\alpha $:
\[ \Psi([X_\alpha,X]) = \Psi([X_\alpha, X^{(-1)}]) = (1,0) \otimes [X_\alpha,X^{(-1)}] = X_\alpha \cdot \Psi(X) \, . \]
By the Jacobi identity,
\begin{align*}
\Psi([Y_\alpha,X]) &= \Psi([Y_\alpha, X^{(1)}]) = (0,1) \otimes [X_\alpha,[Y_\alpha,X^{(1)}]] \\
&= (0,1) \otimes \big([\underbrace{[X^{(1)},X_\alpha]}_{=0}, Y_\alpha] + [\underbrace{[X_\alpha, Y_\alpha]}_{=H_\alpha}, X^{(1)}]\big) \\
&= (0,1) \otimes X^{(1)} = Y_\alpha \cdot \Psi(X) \, .
\end{align*}
Ultimately,
\[ \Psi([H_\alpha, X]) = \Psi(X^{(1)} - X^{(-1)}) = (1,0) \otimes X^{(1)} + (0,-1) \otimes [X_\alpha,X^{(-1)}] = H_\alpha \cdot \Psi(X)\, . \qedhere \]
\end{proof}

\begin{proof}[Proof of \autoref{constthm}] Starting from a simple complex Lie algebra $ \mf{u} $ and a maximal root $ \alpha $, \autoref{rootconst} yields a complex 3-Sasakian datum $ (\mf{u},\mf{v}) $. As mentioned in \autoref{constrem}, the compact real forms $ (\mf{g}, \mf{h}) $ constitute a ``real'' 3-Sasakian datum in the sense of \autoref{3Sd} and \autoref{dop} endows $ M = G/H $ with a homogeneous 3-Sasakian structure. Since $ \mf{v} = C_\mf{u}(\mf{s}_\alpha) $ and thus $ \mf{h} = C_\mf{g}(\mf{k}) $, it follows that $ H = (C_G(K))_0 $.
\end{proof}

\begin{example}
Let us illustrate the construction using the special case $ \rk \mf{u} = 2 $: Here, the only simple Lie algebras are $ \mf{sl}(3,\C) $, $ \mf{sp}(4,\C) $ and $ \mf{g}_2 $, corresponding (in order) to the root systems $ A_2, C_2 $ and $ G_2 $. The following diagrams depict the subalgebras $ \mf{v} $ and $ \mf{s}_\alpha $ from the proposition in these three cases:

\begin{figure}[H]
\begin{center}
\begin{tikzpicture}[scale=2.1]
\draw[line width=1.5pt] (90:1cm)--(0,0)-- (270:1cm);
\draw (30:1cm)--(0,0)--(210:1cm);
\draw (150:1cm)--(0,0)--(330:1cm);

\fill  (0,0) circle (1pt);
\fill  (30:1cm) circle (1pt);
\fill  (90:1cm) circle (1pt);
\fill  (150:1cm) circle (1pt);
\fill  (210:1cm) circle (1pt);
\fill  (270:1cm) circle (1pt);
\fill  (330:1cm) circle (1pt);

\draw[dashed] (30:1cm)--(150:1cm);
\node at (10:0.3cm) {\large $\mathfrak{h}$};
\node at (82:1.1cm) {\large $\mathfrak{s}_\alpha$};
\node at (26:1.15cm) {\small $W$};
\draw[dashed] (210:1cm)--(330:1cm);
\draw (0:0) circle (0.2cm and 0.1cm);
\draw (0,0) circle (0.13cm and 1.1cm);
\node at (-0.8cm,1cm) {\Large $A_2$};
\end{tikzpicture}
\begin{tikzpicture}[scale=2.1]
\draw[line width=1.5pt] (0:1cm)--(0,0)--(180:1cm);
\draw[line width=1.5pt] (90:1cm)--(0,0)-- (270:1cm);
\draw (45:0.707cm)--(0,0)--(225:0.707cm);
\draw (135:0.707cm)--(0,0)--(315:0.707cm);

\fill  (0,0) circle (1pt);
\fill  (0:1cm) circle (1pt);
\fill  (45:0.707cm) circle (1pt);
\fill  (90:1cm) circle (1pt);
\fill  (135:0.707cm) circle (1pt);
\fill  (180:1cm) circle (1pt);
\fill  (225:0.707cm) circle (1pt);
\fill  (270:1cm) circle (1pt);
\fill  (315:0.707cm) circle (1pt);

\draw[dashed] (45:0.707cm)--(135:0.707cm);
\node at (5:1.15cm) {\large $\mathfrak{h}$};
\node at (82:1.1cm) {\large $\mathfrak{s}_\alpha$};
\node at (37:0.83cm) {\small $W$};
\draw[dashed] (225:0.707cm)--(315:0.707cm);
\draw (0:0) circle (1.1cm and 0.11cm);
\draw (0,0) circle (0.13cm and 1.1cm);
\node at (-0.8cm,1cm) {\Large $C_2$};
\end{tikzpicture}
\begin{tikzpicture}[scale=2.1]
\draw[line width=1.5pt] (0:0.577cm)--(0,0)--(180:0.577cm);
\draw[line width=1.5pt] (90:1cm)--(0,0)-- (270:1cm);
\draw (30:1cm)--(0,0)--(210:1cm);
\draw (60:0.577cm)--(0,0)--(240:0.577cm);
\draw (120:0.577cm)--(0,0)--(300:0.577cm);
\draw (150:1cm)--(0,0)--(330:1cm);

\fill  (0,0) circle (1pt);
\fill  (0:0.577cm) circle (1pt);
\fill  (60:0.577cm) circle (1pt);
\fill  (120:0.577cm) circle (1pt);
\fill  (180:0.577cm) circle (1pt);
\fill  (240:0.577cm) circle (1pt);
\fill  (300:0.577cm) circle (1pt);
\fill  (30:1cm) circle (1pt);
\fill  (90:1cm) circle (1pt);
\fill  (150:1cm) circle (1pt);
\fill  (210:1cm) circle (1pt);
\fill  (270:1cm) circle (1pt);
\fill  (330:1cm) circle (1pt);

\draw[dashed] (30:1cm)--(150:1cm);
\node at (10:0.7cm) {\large $\mathfrak{h}$};
\node at (82:1.1cm) {\large $\mathfrak{s}_\alpha$};
\node at (26:1.15cm) {\small $W$};
\draw[dashed] (210:1cm)--(330:1cm);
\draw (0:0) circle (0.7cm and 0.1cm);
\draw (0,0) circle (0.13cm and 1.1cm);
\node at (-0.8cm,1cm) {\Large $G_2$};
\end{tikzpicture}
\end{center}
\vfill
\end{figure}

The corresponding homogeneous 3-Sasakian manifolds are (in order) the Aloff-Wallach space $W^{1,1}=SU(3)/S^1$, the 7-sphere $S^7=Sp(2)/Sp(1)$ and the exceptional space $G_2/Sp(1)$.
\end{example}

We finish this section by showing that the maximal root is in fact an auxiliary choice:

\begin{lemma}
All possible choices of a maximal root in \cref{rootconst} lead to isomorphic 3-Sasakian manifolds.
\end{lemma}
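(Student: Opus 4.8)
The plan is to realise the passage from one maximal root to another by an automorphism of $\mf{g}$ that descends to a $3$-Sasakian isomorphism of the associated spaces. Write $\alpha,\alpha'$ for two maximal roots and decorate the objects of \autoref{constthm} accordingly, so that we must compare $M_\alpha = G/H_\alpha$ with $M_{\alpha'} = G/H_{\alpha'}$; note that these share the same total dimension $4n+3$, so the scaling constants in the metric of \autoref{goal} agree. Since $\mf{u}$ is simple, its root system $\Phi$ is irreducible, and by the remark closing \Cref{roots} there is a Weyl group element $w$ with $w(\alpha)=\alpha'$. The whole construction depends only on the subalgebra $\mf{s}_\alpha$ up to automorphism, so it suffices to produce an automorphism of $\mf{g}$ realising $w$ and to track how the derived subspaces transform.

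First I would lift $w$ to an automorphism preserving the compact real form. Fixing a maximal torus $T\subset G$ with $\mathrm{Lie}(T)=\mf{g}\cap\mf{c}$, the Weyl group is realised as $N_G(T)/T$, so a representative $n\in N_G(T)$ of $w$ yields $\psi:=\Ad(n)\in\Aut(\mf{g})$ whose complexification preserves $\mf{c}$ and induces $w$ on $\Phi$, hence maps $\mf{u}_\beta\to\mf{u}_{w(\beta)}$. Consequently $\psi(\mf{s}_\alpha)=\mf{s}_{\alpha'}$ and $\psi(\mf{k}_\alpha)=\mf{k}_{\alpha'}$. Because an automorphism carries centralizers to centralizers, $\psi(\mf{h}_\alpha)=\psi\big(C_\mf{g}(\mf{k}_\alpha)\big)=C_\mf{g}(\mf{k}_{\alpha'})=\mf{h}_{\alpha'}$ (using $\mf{h}=C_\mf{g}(\mf{k})$ from the proof of \autoref{constthm}), and since every automorphism preserves the Killing form $B$, it respects the $B$-orthogonal decomposition, so $\psi(\mf{g}_1^\alpha)=\mf{g}_1^{\alpha'}$ and $\psi(\mf{m}_\alpha)=\mf{m}_{\alpha'}$.

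Next I would integrate and descend. As $G$ is simply connected, $\psi$ integrates to $\Psi\in\Aut(G)$ with $\dd\Psi_e=\psi$; since $H_\alpha,H_{\alpha'}$ are the connected subgroups integrating $\mf{h}_\alpha,\mf{h}_{\alpha'}$ and $\psi(\mf{h}_\alpha)=\mf{h}_{\alpha'}$, we get $\Psi(H_\alpha)=H_{\alpha'}$, so $\Psi$ descends to a diffeomorphism $\wt{\Psi}:M_\alpha\to M_{\alpha'}$, $gH_\alpha\mapsto\Psi(g)H_{\alpha'}$, which is equivariant along $\Psi$. Under the Kobayashi–Nomizu identification $T_{eH}M\cong\mf{m}$ recalled in \Cref{hist}, the differential of $\wt{\Psi}$ at the base point is precisely $\psi|_{\mf{m}_\alpha}$.

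Finally I would verify that $\wt{\Psi}$ intertwines the $3$-Sasakian tensors. Choosing the standard basis of $\mf{k}_{\alpha'}$ to be $X_i':=\psi(X_i)$—which satisfies $[X_i',X_j']=\psi([X_i,X_j])=2\varepsilon_{ijk}X_k'$ since $\psi$ is a homomorphism—the metric $g$ is preserved because $\psi$ maps $\mf{k}_\alpha,\mf{g}_1^\alpha$ onto $\mf{k}_{\alpha'},\mf{g}_1^{\alpha'}$ preserving $B$ and the vanishing cross term, while the relation $\psi\circ\ad_{X_i}=\ad_{X_i'}\circ\psi$ shows that $\psi|_{\mf{m}_\alpha}$ sends $X_i\mapsto X_i'$ and conjugates $\varphi_i^\alpha$ into $\varphi_i^{\alpha'}$. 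As all four tensors are $G$-invariant, hence determined by the $\Ad(H)$-invariant data on $\mf{m}$, equivariance of $\wt{\Psi}$ together with base-point differential $\psi|_{\mf{m}_\alpha}$ yields $\wt{\Psi}_*\xi_i^\alpha=\xi_i^{\alpha'}$ and $\wt{\Psi}_*\circ\varphi_i^\alpha=\varphi_i^{\alpha'}\circ\wt{\Psi}_*$, so $\wt{\Psi}$ is a $3$-Sasakian isomorphism. I expect the main obstacle to be the second step: realising the Weyl element by an \emph{inner} automorphism that preserves the compact real form and then checking that the non-obvious subspaces $\mf{h}$ and $\mf{g}_1$—defined via centralizers and $B$-orthogonality rather than directly from roots—transform correctly; once this is in place, the lift, descent, and tensor matching are essentially bookkeeping.
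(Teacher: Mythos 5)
Your proposal is correct and follows essentially the same route as the paper: both use the fact that the maximal root is unique up to the Weyl group, realize the Weyl element as $\Ad$ of a representative in $N_G(T)$ (hence as conjugation on $G$), check that this carries $\mf{h}$, $\mf{k}$, $\mf{g}_1$ to their primed counterparts, and descend to a diffeomorphism of the quotients that intertwines the structure tensors. The only difference is that you spell out the tensor-matching step which the paper dismisses with ``one easily checks.''
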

\begin{proof}
Let $ \mf{u} $ be a simple complex Lie algebra, $ \mf{g} $ its compact real form, $ G $ the corresponding simply connected Lie group and $ T \subset G $ a maximal torus. Let $ \alpha, \wt{\alpha} $ denote two maximal roots in the root system $ \Phi $ of $ \mf{u} $ with respect to the Cartan subalgebra given by the complexification of the Lie algebra of $ T $. As mentioned at the end of \Cref{roots}, the maximal root of $ \Phi $ is unique up to the action of the Weyl group $ W(G) = N_G(T)/T $, so there is a representative $ w \in N_G(T) $ such that $ \Ad_w^\C(H_\alpha) = H_{\wt{\alpha}} $. \\
Because the Weyl group acts orthogonally on the root system, $ \Ad_w^\C:\mf{u} \to \mf{u} $ maps the $ \Z $-grading $ \mf{u}^{(k)} $ with respect to $ \alpha $ to the grading $ \wt{\mf{u}}^{(k)} $ with respect to $ \wt{\alpha} $. This implies that $ \Ad_w \, \mf{h} = \wt{\mf{h}} $, where $ \mf{h}, \wt{\mf{h}} \subset \mf{g} $ are the compact real forms of the subalgebras \mbox{$ \mf{v}, \wt{\mf{v}} \subset \mf{u} $} considered in \autoref{rootconst}. Consequently, $ wHw^{-1} =\wt{H} $ for the corresponding connected subgroups $ H, \wt{H} \subset G $ and we have a well-defined diffeomorphism $ G/H \to G/\wt{H} $, $ gH \mapsto wgw^{-1}\wt{H} $. One easily checks from the definitions in \autoref{goal} that this map is a 3-Sasakian isomorphism.
\end{proof}

\section{Deconstructing Homogeneous 3-Sasakian Manifolds} \label{univ}

This section is the centerpiece of the article, where we explain a crucial step in the proof of \autoref{goal}, namely:

\begin{theorem} \label{conv}
Every simply connected homogeneous 3-Sasakian manifold arises from the construction described in \Cref{const}.
\end{theorem}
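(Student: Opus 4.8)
The plan is to reverse-engineer the construction of \Cref{const} starting from an abstract simply connected homogeneous 3-Sasakian manifold $M = G/H$. By \Cref{Einstein}, $M$ is compact with finite fundamental group; since $M$ is simply connected we may take $G$ to be compact and simply connected, acting almost effectively by 3-Sasakian automorphisms, with $H$ connected. The first step is to produce the $\mf{sp}(1)$-subalgebra. The three Reeb vector fields $\xi_1,\xi_2,\xi_3$ satisfy $[\xi_i,\xi_j]=2\varepsilon_{ijk}\xi_k$ and lie in $\mf{aut}(M) = \mf{g}$, so they span a subalgebra $\mf{k}\cong\mf{sp}(1)$; one must check that evaluation at the base point $eH$ realizes $\mf{k}$ as a subspace of a reductive complement $\mf{m}$ transverse to $\mf{h}$. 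I would then set $\mf{g}_1 = (\mf{h}\oplus\mf{k})^{\perp_B}$ using the (negative definite) Killing form $B$ of the compact algebra $\mf{g}$, giving a reductive decomposition $\mf{g} = \mf{h}\oplus\mf{k}\oplus\mf{g}_1$ with $\mf{m}=\mf{k}\oplus\mf{g}_1$.

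The crux is to show that this splitting, together with the action of $\mf{k}$ via $\ad$, assembles into a complex 3-Sasakian datum after complexification, and in particular that $\mf{u}=\mf{g}^\C$ is \emph{simple} and that $\mf{k}^\C\cong\mf{sl}(2,\C)$ arises from a \emph{maximal} root. The key structural input should come from the compatibility identities in \Cref{3Sdef} and the curvature condition defining the Sasakian structures: these force $\ad_{X_i}$ to act on $\mf{g}_1$ the way $\varphi_i$ does, so that $\mf{g}_1$ carries an action of $\mf{k}\cong\mf{sp}(1)$ that is quaternionic, i.e.\ $\mf{g}_1^\C\cong\C^2\otimes W$ as an $\mf{sl}(2,\C)$-module for some $\mf{h}^\C$-module $W$. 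Establishing the bracket relations $[\mf{k},\mf{g}_1]\subset\mf{g}_1$, $[\mf{g}_1,\mf{g}_1]\subset\mf{h}\oplus\mf{k}$ and $[\mf{h},\mf{k}]=0$ (the latter from $\mathcal{L}_X\xi_i=0$ for $X\in\mf{h}$) produces the $\Z_2$-grading $\mf{g}=\mf{g}_0\oplus\mf{g}_1$ with $\mf{g}_0=\mf{h}\oplus\mf{k}$ demanded by \Cref{3Sd}. I expect \emph{this verification} --- that one genuinely gets a 3-Sasakian datum, and that $\mf{g}$ is simple rather than merely semisimple --- to be the main obstacle, since it requires translating the differential-geometric 3-Sasakian axioms into purely Lie-algebraic statements; effectiveness of the $G$-action and irreducibility arguments (or an appeal to the holonomy/Einstein properties via \Cref{Einstein}) will be needed to rule out decomposable $\mf{g}$.

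Once $(\mf{g},\mf{h})$ is known to be a 3-Sasakian datum with $\mf{g}$ compact simple, I pass to the complexification $\mf{u}=\mf{g}^\C$, which is simple. I would then locate $\alpha$: choosing a Cartan subalgebra of $\mf{u}$ containing a maximal torus of $\mf{k}^\C$, the element $H_\alpha$ should be (a multiple of) the coroot attached to $\mf{k}^\C=\mf{s}_\alpha$, and the $\Z_2$-grading $\mf{g}^\C=\mf{g}_0^\C\oplus\mf{g}_1^\C$ must coincide with the parity grading of the $\ad(H_\alpha)$-eigenspace decomposition. The condition $\mf{g}_1^\C\cong\C^2\otimes W$ forces $\ad(H_\alpha)$ to have eigenvalues only in $\{-2,-1,0,1,2\}$ with the extreme values attained solely on $\mf{u}_{\pm\alpha}$, which is exactly criterion (ii) of \Cref{maxroot}; hence $\alpha$ is a maximal root. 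This identifies $(\mf{u},\mf{v})$ with the complex 3-Sasakian datum of \Cref{rootconst}, where $\mf{v}=\mf{h}^\C$.

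Finally, I would close the loop by checking that the 3-Sasakian structure recovered from $(\mf{u},\alpha)$ via \Cref{constthm} agrees with the original one. Since both structures are $G$-invariant, it suffices to compare the defining tensors on the reductive complement $\mf{m}$ at $eH$. The Reeb fields $\xi_i$ and the basis $X_i$ of $\mf{k}$ match by construction, and the endomorphisms $\varphi_i=\frac12\ad_{X_i}$ on $\mf{k}$, $\ad_{X_i}$ on $\mf{g}_1$ are forced by $\varphi_i=-\nabla^g\xi_i$ together with the reductive-homogeneous formula for the Levi-Civita connection. The metric normalization $g|_{\mf{k}}=-\frac{1}{4(n+2)}B$, $g|_{\mf{g}_1}=-\frac{1}{8(n+2)}B$ is then pinned down by the requirements that $\xi_i$ have unit length and that $(g,\xi_i,\eta_i,\varphi_i)$ satisfy the Sasakian identities of \Cref{Sdef}; the relative factor of $2$ between the $\mf{k}$- and $\mf{g}_1$-components is exactly Bielawski's theorem and should emerge from the compatibility $d\eta_i(X,Y)=2g(X,\varphi_i Y)$ on $\mf{g}_1$. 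One caveat to flag: the exceptional algebra $\mf{g}_2$ requires separate treatment (relegated to \Cref{G2}), presumably because the argument identifying $W$ or the simplicity of the relevant representation degenerates in low rank, so the proof here should carry the blanket assumption $\mf{u}\not\cong\mf{g}_2$.
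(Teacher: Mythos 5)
Your opening move already fails: the Reeb fields $\xi_i$ do \emph{not} lie in $\mf{aut}(M)$. By definition $\mf{aut}(M)$ consists of Killing fields $X$ with $\mathcal{L}_X\xi_j=0$ for all $j$, whereas $\mathcal{L}_{\xi_i}\xi_j=[\xi_i,\xi_j]=2\varepsilon_{ijk}\xi_k\neq 0$; each $\xi_i$ preserves only its own Sasakian structure and rotates the other two. Consequently the $\xi_i$ need not be fundamental vector fields of \emph{any} group acting by 3-Sasakian automorphisms, and there is no a priori copy of $\mf{sp}(1)$ inside $\mf{g}$ spanned by ``the Reeb fields''. This is exactly the problem the paper's Boothby--Wang step is designed to solve: one pulls back the contact forms along the orbit map, $\alpha_i=\theta^*\eta_i$, takes their Killing-form duals, and normalizes to obtain elements $X_i\in\mf{g}$ with $C_\mf{g}(X_i)=\ker d\alpha_i=\mf{h}\oplus\langle X_i\rangle$; only then does one prove (\autoref{reeb}) that $\overline{X_i}$ agrees with $\xi_i$ \emph{at the single base point} $p$ (not globally) and that the $X_i$ satisfy the $\mf{sp}(1)$ commutator relations. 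Without this construction your $\mf{k}$, and hence your decomposition $\mf{g}=\mf{h}\oplus\mf{k}\oplus\mf{g}_1$, does not exist.

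Second, your route to maximality is circular, or at least rests on an unproven assertion that contradicts your own caveat. You propose to first show that the 3-Sasakian identities ``force'' $\ad_{X_i}$ to act on $\mf{g}_1$ as $\varphi_i$ does (equivalently $\mf{g}_1^\C\cong\C^2\otimes W$), and then read off maximality of $\alpha$ from \autoref{maxroot}. But this forcing is precisely the hard technical content, and it cannot be a formal consequence of the identities: if it were, then $\ad_{X_1}^2=-\id$ on $\mf{g}_1$ would automatically exclude the eigenvalues $\pm 3$ of $\ad_{H_\alpha}$, and the short-root-of-$\mf{g}_2$ case that you yourself flag for separate treatment could never arise. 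The paper obtains maximality \emph{without} the module structure: from $C_\mf{u}(H_\alpha)=\mf{v}\oplus\langle H_\alpha\rangle$ (Lemmata \ref{0comp} and \ref{pm2comp}) together with the general bound $|c_{\alpha\beta}|\leq 3$, the only obstruction left is $c_{\alpha\beta}=\pm 3$, i.e.\ the $\mf{g}_2$ short root, which is excluded by a genuinely geometric contradiction involving $\varphi_2$ in \Cref{G2}. Only \emph{afterwards} does the paper prove $L_i=\ad_{X_i}$ on $\mf{g}_1$, using the now-available fact that all relevant root spaces satisfy $c_{\alpha\beta}=\pm 1$ (so both endomorphisms act there as multiplication by $\pm i$), plus the $\ad(\mf{t})$-invariance of the metric and of $L_1$ (Lemmata \ref{adtinv} and \ref{commute}) and a positivity argument fixing the sign. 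Your plan reverses this logical order, and the step you correctly label ``the main obstacle'' is left without any mechanism; the same objection applies to your final claim that the reductive-homogeneous formula for $\nabla$ pins down $\varphi_i$ and the metric, since that formula presupposes the metric one is trying to identify -- the paper instead compares the structures through the contact forms and fundamental 2-forms.
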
 \vspace{-.125cm}

From now on, let $ (M^{4n+3},g,\xi_i,\eta_i,\varphi_i)_{i=1,2,3} $ denote a simply connected homogeneous 3-Sasakian manifold and let $ G $ be a compact simply connected Lie group acting almost effectively (i.e.~the kernel of the action is discrete and hence finite) and transitively on $ M $ by 3-Sasakian automorphisms. We will show that the Lie algebra $ \mf{g} $ of $ G $ and its complexification $ \mf{u} = \mf{g}^\C $ are simple and describe how the 3-Sasakian structure gives rise to a maximal root $ \alpha $ of $ \mf{u} $ with respect to a suitably chosen Cartan subalgebra. We can then apply the construction from \Cref{const} and prove that this yields the same 3-Sasakian structure that we started with. \\
The prototypical example to have in mind is where $ G $ is the universal cover of $ \Aut_0(M) $, the identity component of the 3-Sasakian automorphism group of $ M $. By \autoref{Einstein}, $ M $ is compact, so by the Myers-Steenrod theorem, the isometry group $ \Isom(M) $ of $ M $ is a compact Lie group. The subgroup $ \Aut(M) \subset \Isom(M) $ of 3-Sasakian automorphisms of $ M $ is clearly closed and thus also a compact Lie group. Since $ M $ is connected, the identity component $ \Aut_0(M) $ still acts transitively. The universal cover of $ \Aut_0(M) $ acts almost effectively, transitively and by 3-Sasakian automorphisms. It will follow from the results that we are about to prove that the universal cover of $ \Aut_0(M) $ is also compact. \\
Later on, we will show that, in fact, the effectively acting quotient of any group $ G $ satisfying the above assumptions is automatically the full identity component $ \Aut_0(M) $ of the automorphism group. \\
Since $ G $ is compact, its Lie algebra $ \mf{g} $ is reductive, i.e.~decomposes as a direct sum of a semisimple subalgebra and its center $ Z(\mf{g}) $. We first show that $ \mf{g} $ itself is semisimple.

\begin{lemma} \label{lieder}
For $ X, Y \in \mf{g} $, the fundamental vector fields satisfy the equation
\[ d\eta_i(\overline{X},\overline{Y}) = \eta_i ([\overline{X}, \overline{Y}]) \, . \]
Notably, evaluating the left-hand side at a point $ p \in M $ depends on $ \overline{X}, \overline{Y}$ only through their values at $ p $, while the right-hand side a priori depends on the values in a neighborhood of $ p $.
\end{lemma}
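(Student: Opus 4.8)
The plan is to exploit the invariance of the contact forms under the $G$-action together with the coordinate-free formula for the exterior derivative. Since $G$ acts by $3$-Sasakian automorphisms, the flow of each fundamental vector field $\overline{X}$ consists of such automorphisms, so $\mathcal{L}_{\overline{X}}\eta_i = 0$ for every $X \in \mf{g}$ and every $i = 1,2,3$. This is the only structural input; the rest is a short manipulation.

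First I would feed this invariance into Cartan's magic formula $\mathcal{L}_{\overline{X}}\eta_i = d(\iota_{\overline{X}}\eta_i) + \iota_{\overline{X}} d\eta_i = 0$. Evaluating on an arbitrary vector field $Z$ and using $\iota_{\overline{X}}\eta_i = \eta_i(\overline{X})$, this reads
\[ Z\big(\eta_i(\overline{X})\big) = -\,d\eta_i(\overline{X}, Z) = d\eta_i(Z, \overline{X}) \, . \]
Specializing $Z = \overline{Y}$, and then applying the same identity to $\overline{Y}$ in place of $\overline{X}$ with $Z = \overline{X}$ (which is legitimate since $\mathcal{L}_{\overline{Y}}\eta_i = 0$ as well), yields the two substitution identities $\overline{Y}\big(\eta_i(\overline{X})\big) = d\eta_i(\overline{Y}, \overline{X})$ and $\overline{X}\big(\eta_i(\overline{Y})\big) = d\eta_i(\overline{X}, \overline{Y})$.

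Then I would insert these into the standard invariant expression
\[ d\eta_i(\overline{X}, \overline{Y}) = \overline{X}\big(\eta_i(\overline{Y})\big) - \overline{Y}\big(\eta_i(\overline{X})\big) - \eta_i([\overline{X}, \overline{Y}]) \, . \]
The first two terms on the right become $d\eta_i(\overline{X},\overline{Y}) - d\eta_i(\overline{Y},\overline{X}) = 2\,d\eta_i(\overline{X},\overline{Y})$ by antisymmetry, so the equation collapses to $d\eta_i(\overline{X},\overline{Y}) = 2\,d\eta_i(\overline{X},\overline{Y}) - \eta_i([\overline{X},\overline{Y}])$, which rearranges to the claimed $d\eta_i(\overline{X},\overline{Y}) = \eta_i([\overline{X},\overline{Y}])$.

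There is no deep obstacle here; the argument is purely formal and rests entirely on invariance. The only points requiring care are sign conventions, both in Cartan's formula and in the chosen normalization of the fundamental vector field (whether $\overline{X}$ generates the flow of $\exp(tX)$ or of $\exp(-tX)$); but since we use only that each $\overline{X}$ preserves $\eta_i$, the conclusion is insensitive to that choice. The remark appended to the statement, namely that the left-hand side is tensorial and hence depends on $\overline{X},\overline{Y}$ only through their values at $p$ while the right-hand side a priori involves a neighborhood, is not a separate claim to be verified but rather the intended payoff: it will later let us compute $\eta_i([\overline{X},\overline{Y}])$ from pointwise data alone.
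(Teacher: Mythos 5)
Your proof is correct and takes essentially the same route as the paper: both arguments rest solely on the invariance $\mathcal{L}_{\overline{X}}\eta_i = 0$ fed into the three-term coordinate-free formula for $d\eta_i(\overline{X},\overline{Y})$. The only cosmetic difference is that you rewrite the directional-derivative terms via Cartan's magic formula (producing $d\eta_i$ terms that you then cancel by rearranging), whereas the paper rewrites them via the Leibniz rule $\overline{X}\big(\eta_i(\overline{Y})\big) = \big(\mathcal{L}_{\overline{X}}\eta_i\big)(\overline{Y}) + \eta_i\big([\overline{X},\overline{Y}]\big)$, turning them directly into bracket terms; the two manipulations are equivalent.
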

\begin{proof}
The standard formula for the exterior derivative reads
\[ d\eta_i(\overline{X},\overline{Y}) = \overline{X}\big(\eta_i(\overline{Y})\big) - \overline{Y}\big(\eta_i(\overline{X})\big) - \eta_i ([\overline{X},\overline{Y}]) \, . \]
The Leibniz rule for the Lie derivative implies
\[ \overline{X}\big(\eta_i(\overline{Y})\big) = \mathcal{L}_{\overline{X}} \big(\eta_i(\overline{Y})\big) = \big(\mathcal{L}_{\overline{X}}\eta_i \big) (\overline{Y}) + \eta_i\big(\mathcal{L}_{\overline{X}}\overline{Y}\big) = \eta_i ([\overline{X},\overline{Y}])  \, , \]
where $ \mathcal{L}_{\overline{X}}\eta_i = 0 $ because $ G $ acts by 3-Sasakian automorphisms. Applying the same reasoning to the second term yields $ \overline{Y}\big(\eta_i(\overline{X})\big) = - \eta_i([\overline{X},\overline{Y}]) $. 
\end{proof}

\begin{proposition}
The Lie algebra $ \mf{g} $ has trivial center and is therefore semisimple.
\end{proposition}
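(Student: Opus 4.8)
The plan is to prove that every central element of $\mf{g}$ induces the zero fundamental vector field, which by almost effectiveness forces the element itself to vanish. So I would start with an arbitrary $X \in Z(\mf{g})$. Since the infinitesimal action $\mf{g} \to \mf{X}(M)$, $Y \mapsto \overline{Y}$, is a Lie algebra (anti-)homomorphism, centrality of $X$ gives $[\overline{X},\overline{Y}] = 0$ for every $Y \in \mf{g}$. Plugging this into \autoref{lieder} immediately yields $d\eta_i(\overline{X},\overline{Y}) = \eta_i([\overline{X},\overline{Y}]) = 0$ for all $Y \in \mf{g}$ and all $i = 1,2,3$. This is exactly the point of \autoref{lieder}: it converts a global bracket condition into a pointwise statement about the two-forms $d\eta_i$.

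Next I would use transitivity to upgrade this. Because $G$ acts transitively, the fundamental vector fields $\overline{Y}$ span the tangent space $T_pM$ at every point $p$, so the vanishing above becomes $d\eta_i(\overline{X},V) = 0$ for all $V \in T_pM$. Invoking the identity $d\eta_i(\cdot,\cdot) = 2g(\cdot,\varphi_i\,\cdot)$ from \autoref{Sdef}, this reads $g(\overline{X},\varphi_i V) = 0$ for all $V$, i.e.\ $\overline{X}$ is $g$-orthogonal to the image of $\varphi_i$. From $\varphi_i^2 = -\id + \eta_i \otimes \xi_i$ together with $\eta_i \circ \varphi_i = 0$ one checks that the image of $\varphi_i$ is precisely $\ker\eta_i$, which, since $\eta_i = g(\xi_i,\cdot)$ with $\xi_i$ of unit length, is the hyperplane $\xi_i^{\perp}$. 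Hence $\overline{X}$ must be a pointwise multiple of $\xi_i$.

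Applying this conclusion for $i = 1,2,3$ at once, $\overline{X}$ lies in $\R\xi_1 \cap \R\xi_2 \cap \R\xi_3$ at every point; as $\xi_1,\xi_2,\xi_3$ are orthonormal and hence linearly independent, this intersection is trivial, so $\overline{X} \equiv 0$. Finally, almost effectiveness means the kernel of the $G$-action is discrete, so its Lie algebra is trivial and the map $X \mapsto \overline{X}$ is injective; therefore $X = 0$. This shows $Z(\mf{g}) = 0$, and since $\mf{g}$ is reductive, vanishing of the center forces $\mf{g}$ to be semisimple. I expect the only genuinely substantive step to be the middle one — extracting from the vanishing of $d\eta_i(\overline{X},\cdot)$ that $\overline{X}$ is collinear with each Reeb field $\xi_i$ via the structure tensors — after which the three 3-Sasakian Reeb directions collapse $\overline{X}$ to zero essentially for free. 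The translation from $\overline{X} = 0$ back to $X = 0$ is where the almost-effectiveness hypothesis is crucial and should be stated explicitly.
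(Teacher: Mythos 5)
Your proof is correct and is essentially the paper's own argument run in the contrapositive direction: both hinge on \autoref{lieder} together with the structure identities $d\eta_i(\cdot,\cdot) = 2g(\cdot,\varphi_i\,\cdot)$ and $\varphi_i^2 = -\id + \eta_i\otimes\xi_i$ (which say that $d\eta_i(\overline{X}_p,\cdot)$ vanishes exactly when $\overline{X}_p$ lies on the Reeb line), and then on almost effectiveness to pass from $\overline{X}\equiv 0$ to $X=0$. Where the paper takes a nonzero $X$ and exhibits a witness $Y$ with $\overline{Y}_p = \varphi_i\overline{X}_p$ so that $\eta_i(\overline{[X,Y]}_p) = 2\|\overline{X}_p - P_i\overline{X}_p\|^2 \neq 0$, you show directly that a central $X$ has $\overline{X}$ pointwise collinear with all three orthonormal Reeb fields and hence zero --- the same mechanism, merely packaged in the opposite logical direction.
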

\begin{proof}
Let $ X \in \mf{g} $ such that $ X \neq 0 $. Since $ G $ acts almost effectively, there is a point $ p \in M $ such that $ \overline{X}_p \neq 0 $ and thus an index $ i \in \{1,2,3\} $ such that $ \overline{X}_p $ is not proportional to $ (\xi_i)_p $. We show that there exists some $ Y \in \mf{g} $ satisfying $ \eta_i (\overline{[X,Y]}_p) \neq 0 $, which implies $ [X,Y] \neq 0 $: Because $ G $ acts transitively, we may choose some $ Y \in \mf{g} $ such that $ \overline{Y}_p = \varphi_i \overline{X}_p $. From the previous lemma, we have
\[ \eta_i (\overline{[X,Y]}_p ) = -d\eta_i(\overline{X}_p, \overline{Y}_p) = - d\eta_i(\overline{X}_p, \varphi_i \overline{X}_p) \, . \]
One of the Sasaki equations in \autoref{Sdef} reads $ \varphi_i^2 \overline{X}_p = - \overline{X}_p + P_i \overline{X}_p $, where $ P_i $ denotes the orthogonal projection to the line through $ (\xi_i)_p $. Hence,
\[\ \eta_i(\overline{[X,Y]}_p) = 2g_p\big(\overline{X}_p,\overline{X}_p - P_i\overline{X}_p\big) = 2 \big\|\overline{X}_p - P_i \overline{X}_p \big\|^2 \neq 0 \, .\qedhere \]
\end{proof}

\begin{remark}
The compactness assumption fails for homogeneous negative $3$-$(\alpha,\delta)$-Sasakian manifolds. Thus, unlike with the construction in the previous section, a classification cannot be achieved by the method described here. Indeed, in \cite{ADS} homogenoeus negative $3$-$(\alpha,\delta)$-Sasakian manifolds with a transitive action by a non-semisimple Lie group are constructed.
\end{remark}

Since $ \mf{g} $ is now both semisimple and the Lie algebra of a compact Lie group, its Killing form $ B $ is negative definite. We fix a point $ p \in M $ and let $ H := G_p $ denote its isotropy group. We write $ \theta: G \to M, \, g \mapsto g \cdot p $ for the orbit map, which has surjective differential $ d\theta_e: T_eG \cong \mf{g} \to T_pM, \, X \mapsto \overline{X}_p $. Let $ \alpha_i := \theta^*\eta_i $ denote the pullback of the contact form along the orbit map, which we may view - depending on the context - as either a linear form on $ \mf{g} $ or as a left-invariant differential one-form on $ G $. In their seminal 1958 article \cite{BW}, \textsc{Boothby} and \textsc{Wang} exhibited the following results:

\begin{lemma}[{\cite[Lemmata 3,4,5]{BW}}]
The one-form $ \alpha_i $ is $ \Ad(H) $-invariant, satisfies $ \alpha_i(\mf{h}) = 0 $ and $ d\alpha_i $ has rank $ 4n+2 $. Furthermore, the Lie algebra of the subgroup $ \{g \in G \; | \; \Ad_g^*\alpha_i = \alpha_i \} $ is given by $ \ker d\alpha_i $, contains $ \mf{h} $ and has dimension $ \dim \mf{h} + 1 $.
\end{lemma}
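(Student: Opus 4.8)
The plan is to derive all four assertions from the single structural observation that $\alpha_i=\theta^*\eta_i$ is a \emph{left-invariant} one-form on $G$. Left-invariance follows from $\theta\circ L_h=\ell_h\circ\theta$, where $\ell_h$ denotes the action of $h\in G$ on $M$, together with $\ell_h^*\eta_i=\eta_i$ (as $G$ acts by $3$-Sasakian automorphisms): indeed $L_h^*\alpha_i=\theta^*\ell_h^*\eta_i=\alpha_i$. Consequently $\alpha_i$ is determined by the linear form $X\mapsto(\eta_i)_p(\overline{X}_p)$ on $\mf{g}$, and its exterior derivative by the antisymmetric bilinear form $(d\alpha_i)_e(X,Y)=-\alpha_i([X,Y])$. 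The vanishing $\alpha_i(\mf{h})=0$ is then immediate: for $X\in\mf{h}=\ker d\theta_e$ we have $\overline{X}_p=0$, so $\alpha_i(X)=(\eta_i)_p(\overline{X}_p)=0$.

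For $\Ad(H)$-invariance I would invoke the standard identity $(\ell_h)_*\overline{X}_q=\overline{\Ad_h X}_{h\cdot q}$ for fundamental vector fields. Specializing to $q=p$ and $h\in H$ (so that $h\cdot p=p$) and using $\ell_h^*\eta_i=\eta_i$ gives
\[ \alpha_i(\Ad_h X)=(\eta_i)_p\big(\overline{\Ad_h X}_p\big)=(\eta_i)_p\big((\ell_h)_*\overline{X}_p\big)=(\ell_h^*\eta_i)_p(\overline{X}_p)=\alpha_i(X). \]
In particular every $h\in H$ lies in the subgroup $K_i:=\{g\in G:\Ad_g^*\alpha_i=\alpha_i\}$, so once we identify its Lie algebra with $\ker d\alpha_i$, the inclusion $\mf{h}\subset\ker d\alpha_i$ follows by differentiating $\exp(tX)\in K_i$ for $X\in\mf{h}$.

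For the rank statement I would use $d\alpha_i=\theta^*d\eta_i$, whence $(d\alpha_i)_e(X,Y)=(d\eta_i)_p(\overline{X}_p,\overline{Y}_p)$. Since $d\eta_i(\,\cdot\,,\,\cdot\,)=2g(\,\cdot\,,\varphi_i\,\cdot\,)$ and $\ker\varphi_i=\langle\xi_i\rangle$ (because $\varphi_i^2=-\id+\eta_i\otimes\xi_i$ from \autoref{Sdef} forces $\varphi_i$ to be invertible on $\xi_i^{\perp}$), the form $(d\eta_i)_p$ has one-dimensional radical $\langle(\xi_i)_p\rangle$ and rank $4n+2$. As $d\theta_e$ is surjective, $X$ lies in the radical of $(d\alpha_i)_e$ exactly when $\overline{X}_p\in\langle(\xi_i)_p\rangle$; thus $\ker d\alpha_i=(d\theta_e)^{-1}\langle(\xi_i)_p\rangle$ has dimension $\dim\mf{h}+1$, and $d\alpha_i$ has rank $\dim\mf{g}-(\dim\mf{h}+1)=\dim M-1=4n+2$.

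Finally, to identify $\ker d\alpha_i$ with $\mathrm{Lie}(K_i)$, I would differentiate the defining condition $\alpha_i\circ\Ad_{\exp(tZ)}=\alpha_i$ at $t=0$, obtaining $\alpha_i\circ\ad_Z=0$, i.e.\ $\alpha_i([Z,Y])=0$ for all $Y$, which by $(d\alpha_i)_e(Z,Y)=-\alpha_i([Z,Y])$ is precisely $Z\in\ker d\alpha_i$; conversely any such $Z$ exponentiates into $K_i$. Combined with the previous two paragraphs this yields $\mf{h}\subset\mathrm{Lie}(K_i)=\ker d\alpha_i$ and $\dim\mathrm{Lie}(K_i)=\dim\mf{h}+1$. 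Every step is computational rather than conceptual; the only point requiring care is keeping the three avatars of $d\alpha_i$ aligned --- the left-invariant two-form on $G$, the bilinear form $(X,Y)\mapsto-\alpha_i([X,Y])$ on $\mf{g}$, and the pullback $\theta^*d\eta_i$ --- and checking that the radical of this left-invariant two-form genuinely computes the Lie algebra of the stabilizer $K_i$.
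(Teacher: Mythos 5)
Your proof is correct, and all of its constituent steps check out. One important point of context: the paper does not prove this lemma at all --- it is imported verbatim from Boothby--Wang \cite{BW} (their Lemmata 3, 4, 5, proved there for general homogeneous contact manifolds), so there is no internal argument to compare yours against. What you have written is essentially the standard Boothby--Wang argument specialized to the present setting, and it would make this step of the paper self-contained. The three avatars of $d\alpha_i$ that you flag are indeed the crux, and you align them correctly: left-invariance of $\alpha_i$ via $\theta\circ L_h=\ell_h\circ\theta$ and $\ell_h^*\eta_i=\eta_i$; the Maurer--Cartan identity $(d\alpha_i)_e(X,Y)=-\alpha_i([X,Y])$ for left-invariant forms; and the pullback identity $(d\alpha_i)_e(X,Y)=(d\eta_i)_p(\overline{X}_p,\overline{Y}_p)$. (These two expressions for $(d\alpha_i)_e$ are consistent with the paper's own \autoref{lieder}, the sign being absorbed by the anti-homomorphism property $[\overline{X},\overline{Y}]=-\overline{[X,Y]}$ of fundamental vector fields of a left action.) Your rank computation correctly reduces to the fact that the radical of $(d\eta_i)_p$ is exactly $\langle(\xi_i)_p\rangle$, which is the defining property of the Reeb field recalled in \autoref{Sdef}, and then pulls back along the surjection $d\theta_e$ with kernel $\mf{h}$ to give $\dim\ker d\alpha_i=\dim\mf{h}+1$ and rank $4n+2$. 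Finally, your identification of $\ker d\alpha_i$ with the Lie algebra of $\{g\in G\,|\,\Ad_g^*\alpha_i=\alpha_i\}$ is sound in both directions: differentiation gives $\alpha_i\circ\ad_Z=0$, and conversely $\alpha_i\circ\ad_Z=0$ kills every term of $\alpha_i\circ e^{t\,\ad_Z}$ beyond the zeroth, so $Z$ exponentiates into the stabilizer.
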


We now let $ \widetilde{X_i} \in \mf{g} $ denote the Killing dual of $ \alpha_i $, i.e.~$ B(\widetilde{X_i}, \cdot) = \alpha_i $ and consider \mbox{$ X_i := \widetilde{X_i}/B(\widetilde{X_i}, \widetilde{X_i}) $.} $ \Ad $-invariance of $ B $ implies that $ \{g \in G \; | \; \Ad_g X_i = X_i \} $ and $ \{g \in G \; | \; \Ad_g^*\alpha_i = \alpha_i\} $ coincide, so
\[ C_\mf{g}(X_i) = \ker d\alpha_i = \mf{h} \oplus \langle X_i \rangle \, . \]

\begin{proposition} \label{reeb} \vspace{-.125cm}
The fundamental vector fields $ \overline{X_i} $ coincide with the Reeb vector fields $ \xi_i $ at the point $ p $ and obey the same commutator relations $ [X_i,X_j] = 2\varepsilon_{ijk}X_k $, where $(i,j,k)$ is a permutation of $(1,2,3)$.
\end{proposition}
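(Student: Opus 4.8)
The plan is to prove the two assertions in turn, reducing each to the behaviour of the fundamental vector fields at the single point $p$ and then propagating the resulting identities to all of $\mf{g}$.

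For the coincidence $\overline{X_i}_p=(\xi_i)_p$ I would exploit that $\alpha_i=\theta^*\eta_i$ translates the Reeb field into an algebraic object: for every $Y\in\mf{g}$ one has $\alpha_i(Y)=\eta_i(\overline{Y}_p)=g_p((\xi_i)_p,\overline{Y}_p)$. The decisive computation is to identify $\ker d\alpha_i$ geometrically. Since $d\alpha_i(Y,Z)=d\eta_i(\overline{Y}_p,\overline{Z}_p)=2g_p(\overline{Y}_p,\varphi_i\overline{Z}_p)=-2g_p(\varphi_i\overline{Y}_p,\overline{Z}_p)$ and $d\theta_e$ is surjective, $Y$ lies in $\ker d\alpha_i$ exactly when $\varphi_i\overline{Y}_p=0$; by the Sasaki identity $\varphi_i^2=-\id+\eta_i\otimes\xi_i$ the kernel of $\varphi_i$ on $T_pM$ is the line $\langle(\xi_i)_p\rangle$, so $\ker d\alpha_i=\{Y:\overline{Y}_p\in\langle(\xi_i)_p\rangle\}$. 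Comparing this with the already-established equality $\ker d\alpha_i=\mf{h}\oplus\langle X_i\rangle$ forces $\overline{X_i}_p=c\,(\xi_i)_p$ for some scalar $c$. To pin down $c=1$ I would evaluate $\alpha_i$ on $X_i$ in two ways: on one hand $\alpha_i(X_i)=B(\widetilde{X_i},X_i)=1$ by the normalisation of $X_i$, on the other $\alpha_i(X_i)=g_p((\xi_i)_p,\overline{X_i}_p)=c\,\|(\xi_i)_p\|^2=c$ since $\xi_i$ is a unit field.

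For the commutator relations I would first observe that each $\xi_i$ is a $G$-invariant vector field, since $G$ acts by $3$-Sasakian automorphisms and these preserve the Reeb fields. As $X_i$ centralises $\mf{h}$ (because $\mf{h}\subset C_\mf{g}(X_i)$), the right-translation construction $V^{X_i}_{gH}:=\frac{d}{dt}\big|_{0}\,g\exp(tX_i)H$ descends to a well-defined $G$-invariant field on $M$ that is $\theta$-related to the left-invariant field $X_i^L$ on $G$; in particular $V^{X_i}_p=\overline{X_i}_p=(\xi_i)_p$. Two $G$-invariant fields agreeing at one point of a transitive action coincide, so $\xi_i=V^{X_i}$ globally. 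The homomorphism property $[V^{X_i},V^{X_j}]=V^{[X_i,X_j]}$, inherited from $[X_i^L,X_j^L]=[X_i,X_j]^L$ via $\theta$-relatedness, then turns $[\xi_i,\xi_j]=2\varepsilon_{ijk}\xi_k$ into $V^{[X_i,X_j]}=V^{2\varepsilon_{ijk}X_k}$, whence $[X_i,X_j]-2\varepsilon_{ijk}X_k\in\mf{h}$.

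The final, and I expect the most delicate, step is to upgrade this congruence modulo $\mf{h}$ to an honest equality in $\mf{g}$; the obstruction is precisely that an equality of fundamental vectors at $p$ controls only the class in $\mf{g}/\mf{h}$. Here I would test the difference $D_{ij}:=[X_i,X_j]-2\varepsilon_{ijk}X_k\in\mf{h}$ against $\mf{h}$ with the Killing form. Invariance of $B$ together with $[\mf{h},X_j]=0$ gives $B([X_i,X_j],Y)=B(X_i,[X_j,Y])=0$ for $Y\in\mf{h}$, while $B(X_k,Y)=B(\widetilde{X_k},Y)/B(\widetilde{X_k},\widetilde{X_k})=\alpha_k(Y)/B(\widetilde{X_k},\widetilde{X_k})=0$ because $\alpha_k(\mf{h})=0$. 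Thus $D_{ij}$ is $B$-orthogonal to $\mf{h}$; since $B$ restricts to a negative-definite, hence non-degenerate, form on the subalgebra $\mf{h}$ and $D_{ij}\in\mf{h}$, this yields $D_{ij}=0$, i.e.\ $[X_i,X_j]=2\varepsilon_{ijk}X_k$. The only point demanding genuine care beyond this upgrade is keeping the sign conventions straight, since the fundamental-field assignment $X\mapsto\overline{X}$ is an anti-homomorphism whereas the invariant-field assignment $X\mapsto V^{X}$ is a homomorphism.
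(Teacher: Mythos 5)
Your proposal is correct and follows essentially the same route as the paper: both identify $\overline{X_i}_p$ with $(\xi_i)_p$ via the Boothby--Wang data ($X_i \in \ker d\alpha_i$ together with the normalisation $\alpha_i(X_i)=1$ and the pointwise characterisation of the Reeb vector), then obtain $[X_i,X_j]\equiv 2\varepsilon_{ijk}X_k \bmod \mf{h}$ from $\theta$-relatedness of the left-invariant fields $X_i$ with the $G$-invariant fields $\xi_i$, and finally kill the $\mf{h}$-component by showing it is $B$-orthogonal to $\mf{h}$ using $\alpha_k(\mf{h})=0$, $[X_j,\mf{h}]=0$ and definiteness of $B$. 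Your explicit construction of the invariant fields $V^{X_i}$ is just an unpacking of the paper's phrase ``$X_i$ (viewed as a left-invariant vector field on $G$) and $\xi_i$ are $\theta$-related,'' so the two arguments coincide in substance.
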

\begin{proof} \vspace{-.25cm}
Clearly, $ X_i \in C_\mathfrak{g}(X_i) = \ker d\alpha_i $, so that $ (\overline{X_i})_p \in \ker (d\eta_i)_p $. Furthermore, we have $ 1 = \alpha_i( X_i) = (\eta_i)_p(\overline{X_i})_p $. Thus, $ (\overline{X_i})_p $ statisfies the uniquely defining equations of the Reeb vector $ (\xi_i)_p $. Phrased differently, $ X_i $ (viewed as a left-invariant vector field on $ G $) and $ \xi_i $ are $ \theta $-related. Consequently, the Lie brackets $ [X_i,X_j] $ and $ [\xi_i, \xi_j] = 2\varepsilon_{ijk} \xi_k $ are also $ \theta $-related and in particular, $ \overline{[X_i, X_j]}_p = 2\varepsilon_{ijk} (\xi_k)_p = 2\varepsilon_{ijk} (\overline{X_k})_p $. Hence, $ [X_i,X_j] $ and $ 2\varepsilon_{ijk} X_k $ could only differ by an element of $ \mf{h} $. But $ B(X_k, \mf{h}) = \alpha_k(\mf{h}) = 0 $ and $ B([X_i,X_j], \mf{h}) = B(X_i,[X_j,\mf{h}]) = 0 $, so that also $ B([X_i,X_j]-2\varepsilon_{ijk}X_k, \mf{h}) = 0 $.
\end{proof}

Let $ \mf{s} $ be a maximal Abelian subalgebra of $ \mf{h} $. Since $ C_\mf{g}(X_1) = \mf{h} \oplus \langle X_1\rangle $, it follows that $ \mf{t} := \mf{s} \oplus \langle X_1 \rangle $ is a maximal Abelian subalgebra of $ \mf{g} $. In particular, $ \rk G = \rk H +1 $. The Riemannian metric $ g $ corresponds to an $ \Ad(H) $-invariant and thus also $ \ad(\mf{h}) $-invariant inner product on a reductive complement of our choice. The following lemma states that this inner product is even $ \ad(\mf{t}) $-invariant:

\begin{lemma} \label{adtinv}
For all $ Y,Z \in \mf{g} $, we have
\[ g_p(\overline{[X_i,Y]}_p,\overline{Z}_p) + g_p(\overline{Y}_p, \overline{[X_i,Z]}_p) = 0 \, . \]
\end{lemma}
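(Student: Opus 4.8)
The statement says exactly that the $\Ad(H)$-invariant inner product which $g$ induces on the reductive complement is, in addition, skew-invariant under $\ad(X_i)$. The plan is to read off both sides from a single scalar function $f:=g(\overline{Y},\overline{Z})$ and the Killing field $\overline{X_i}$, using the principle (already exploited in \autoref{lieder}) that a directional derivative $\overline{X_i}(f)\big|_p$ sees $\overline{X_i}$ only through its value at $p$, whereas a Lie bracket does not.

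First I would write down the infinitesimal isometry condition for $\overline{X_i}$. Since $G$ acts by isometries, $\overline{X_i}$ is Killing, so $\mathcal{L}_{\overline{X_i}}g=0$ gives, at every point,
\[ \overline{X_i}\big(g(\overline{Y},\overline{Z})\big) = g\big([\overline{X_i},\overline{Y}],\overline{Z}\big) + g\big(\overline{Y},[\overline{X_i},\overline{Z}]\big) \, . \]
Because the fundamental vector field map is an anti-homomorphism, $[\overline{X_i},\overline{Y}]=-\overline{[X_i,Y]}$ (compare the computation preceding \autoref{reeb}), the right-hand side at $p$ equals $-\big(g_p(\overline{[X_i,Y]}_p,\overline{Z}_p)+g_p(\overline{Y}_p,\overline{[X_i,Z]}_p)\big)$, i.e. the negative of the quantity to be shown to vanish. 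Thus it suffices to prove $\overline{X_i}(f)\big|_p=0$.

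The decisive move is to compute this derivative along $\xi_i$ rather than $\overline{X_i}$. By \autoref{reeb} we have $(\overline{X_i})_p=(\xi_i)_p$, and a directional derivative of a function depends only on the differentiating vector at the point, so $\overline{X_i}(f)\big|_p=\xi_i(f)\big|_p$. Now $\xi_i$ is itself Killing, whence $\xi_i(f)=g([\xi_i,\overline{Y}],\overline{Z})+g(\overline{Y},[\xi_i,\overline{Z}])$. At this stage I would invoke the defining feature of the \emph{automorphism} action: every $\overline{Y}\in\mf{g}$ satisfies $\mathcal{L}_{\overline{Y}}\xi_i=[\overline{Y},\xi_i]=0$, so both brackets vanish and $\xi_i(f)\equiv0$ on all of $M$. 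Combining the last two paragraphs yields $\overline{X_i}(f)\big|_p=0$ and hence the lemma.

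The one genuine obstacle is that a naive expansion is circular: the Killing identity for $\overline{X_i}$, or the Sasaki relations $\nabla\xi_i=-\varphi_i$ together with the skewness of $\varphi_i$, only re-express the target sum in terms of itself. The escape is that fundamental fields of the automorphism group commute with every Reeb field $\xi_i$ — a property strictly stronger than being Killing, and the feature that distinguishes this situation from a general homogeneous Riemannian manifold — which is precisely what forces $\xi_i(f)$, and with it $\overline{X_i}(f)\big|_p$, to be zero.
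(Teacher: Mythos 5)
Your argument is correct, and its first half is identical to the paper's: both convert the left-hand side into $-(\overline{X_i})_p\big(g(\overline{Y},\overline{Z})\big)$ using the Killing identity for $\overline{X_i}$ together with the anti-homomorphism property of fundamental vector fields, and both then replace $(\overline{X_i})_p$ by $(\xi_i)_p$ via \autoref{reeb}. The difference lies in how the derivative is shown to vanish. The paper works with the Levi-Civita connection: metric compatibility gives $(\xi_i)_p\big(g(\overline{Y},\overline{Z})\big)=g_p(\nabla_{(\xi_i)_p}\overline{Y},\overline{Z}_p)+g_p(\overline{Y}_p,\nabla_{(\xi_i)_p}\overline{Z})$; torsion-freeness together with $[\xi_i,\overline{Y}]=[\xi_i,\overline{Z}]=0$ trades these for $\nabla_{\overline{Y}_p}\xi_i$ and $\nabla_{\overline{Z}_p}\xi_i$; and then $\nabla\xi_i=-\varphi_i$ with the skew-symmetry of $g(\cdot,\varphi_i\,\cdot)$ finishes. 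You instead apply the Killing identity a second time, now to $\xi_i$ itself, whereupon the commutation $[\xi_i,\overline{Y}]=[\xi_i,\overline{Z}]=0$ annihilates both bracket terms outright. Your route is more elementary --- it never touches the connection or the tensors $\varphi_i$, using only that $\xi_i$ is Killing (part of the Sasakian definition) and that fundamental fields of an automorphism action commute with the Reeb fields --- and it yields the slightly stronger conclusion $\xi_i\big(g(\overline{Y},\overline{Z})\big)\equiv 0$ on all of $M$, not merely at $p$. One correction to your closing remark, though: the connection-based route via $\nabla\xi_i=-\varphi_i$ and skewness of $\varphi_i$ is not circular; it is precisely the paper's proof, and it closes because the commutation $[\xi_i,\overline{Y}]=0$ enters through the torsion-free swap $\nabla_{\xi_i}\overline{Y}=\nabla_{\overline{Y}}\xi_i$. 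What you correctly identify is that in either version this commutation --- a consequence of acting by automorphisms rather than mere isometries --- is the decisive input.
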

\begin{proof}
Since $ \overline{X_i} $ is a Killing vector field ($ G $ acts isometrically) that coincides with $ \xi_i $ at $ p $, we obtain
\begin{align*}
g_p(\overline{[X_i,Y]}_p,\overline{Z}_p) + g_p(\overline{Y}_p, \overline{[X_i,Z]}_p) &= - g_p([\overline{X_i},\overline{Y}]_p,\overline{Z}_p) - g_p(\overline{Y}_p, [\overline{X_i},\overline{Z}]_p) \\
&= - (\overline{X_i})_p\big(g(\overline{Y},\overline{Z})\big) =  - (\xi_i)_p\big(g(\overline{Y},\overline{Z})\big) \, .
\end{align*}
Because the Levi-Civita connection $ \nabla $ is metric and torsion free and all $ G $-fundamental fields commute with $ \xi_i $ ($ G $ acts by 3-Sasakian automorphisms), we have
\[ (\xi_i)_p\big(g(\overline{Y},\overline{Z})\big) = g_p(\nabla_{(\xi_i)_p} \overline{Y}, \overline{Z}_p) + g_p(\overline{Y}_p, \nabla_{(\xi_i)_p} \overline{Z}) = g_p(\nabla_{\overline{Y}_p} \xi_i,\overline{Z}_p) + g_p(\overline{Y}_p, \nabla_{\overline{Z}_p}\xi_i) \, . \]
Finally, $ \nabla \xi_i = - \varphi_i $ and $ g(\cdot, \varphi_i \cdot) $ is skew-symmetric.
\end{proof}

We now move on to the complex picture and let $ \mf{u} := \mf{g}^\C $, $ \mf{v} := \mf{h}^\C $, $ \mf{c} := \mf{t}^\C $ and $ \alpha := 2i \alpha_1^\C|_\mf{c} $. Let us consider the vectors $ H_\alpha,X_\alpha,Y_\alpha \in \mf{u} $ defined by
\[ H_\alpha := \frac{1}{i} X_1 \, , \quad X_\alpha := \frac{1}{2i}(X_2 - i X_3) \, , \quad Y_\alpha := \frac{1}{2i}(X_2 +iX_3) \, , \]
which satisfy the commutation relations
\[ [H_\alpha,X_\alpha] = 2X_\alpha \, , \quad [H_\alpha,Y_\alpha] = -2Y_\alpha \, , \quad [X_\alpha,Y_\alpha] = H_\alpha \, . \]

\begin{proposition} \label{rootprop}
The linear form $ \alpha $ is a root of $ \mf{u} $ with respect to $ \mf{c} $, whose root space is given by $ \mf{u}_\alpha = \langle X_\alpha \rangle $. Furthermore, $ \mf{u}_{-\alpha} = \langle Y_\alpha \rangle $ and $ H_\alpha $ is the coroot of $ \alpha $.
\end{proposition}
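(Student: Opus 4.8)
The plan is to show that $X_\alpha$ is a nonzero simultaneous eigenvector of $\ad(H)$ for every $H\in\mf{c}$, with eigenvalue $\alpha(H)$; the statements about $\mf{u}_{-\alpha}$ and $H_\alpha$ then drop out of the analogous computation and from standard root-space theory. As a preliminary I would record that $\mf{c}=\mf{t}^\C$ really is a Cartan subalgebra of $\mf{u}$: since $\mf{t}$ is a maximal Abelian subalgebra of the compact semisimple Lie algebra $\mf{g}$, it is self-centralizing and acts semisimply, so its complexification $\mf{t}^\C$ is a Cartan subalgebra of $\mf{g}^\C=\mf{u}$. Hence it is meaningful to ask whether $\alpha$ is a root.

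The one genuinely structural input is that $\mf{h}$ commutes with all three $X_i$. Indeed, $C_\mf{g}(X_i)=\ker d\alpha_i=\mf{h}\oplus\langle X_i\rangle$ holds for each $i=1,2,3$, so $\mf{h}\subset C_\mf{g}(X_i)$, i.e.\ $[\mf{h},X_i]=0$. In particular the maximal Abelian subalgebra $\mf{s}\subset\mf{h}$ annihilates $X_2$ and $X_3$, whence $[\mf{s}^\C,X_\alpha]=0$; and since $\alpha_1(\mf{h})=0$ we also have $\alpha|_{\mf{s}^\C}=0$. This settles the $\mf{s}^\C$-summand of the decomposition $\mf{c}=\mf{s}^\C\oplus\C X_1$.

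For the remaining direction I would use the $\mf{sp}(1)$-relations: $[X_1,X_2]=2X_3$ and $[X_1,X_3]=-2X_2$ give $[X_1,X_2-iX_3]=2i(X_2-iX_3)$, hence $[X_1,X_\alpha]=2i\,X_\alpha$. On the other hand $\alpha(X_1)=2i\,\alpha_1(X_1)=2i$, because $\alpha_1(X_1)=1$ (established in \autoref{reeb}), so $[X_1,X_\alpha]=\alpha(X_1)X_\alpha$ and in particular $\alpha\neq0$. Combining the two summands yields $[H,X_\alpha]=\alpha(H)X_\alpha$ for all $H\in\mf{c}$, so $\alpha$ is a root with $X_\alpha\in\mf{u}_\alpha$; since root spaces of a complex semisimple Lie algebra are one-dimensional, this forces $\mf{u}_\alpha=\langle X_\alpha\rangle$.

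The mirror computation $[X_1,X_2+iX_3]=-2i(X_2+iX_3)$ gives $[H,Y_\alpha]=-\alpha(H)Y_\alpha$, so $\mf{u}_{-\alpha}=\langle Y_\alpha\rangle$ by the same one-dimensionality. Finally the given relation $[X_\alpha,Y_\alpha]=H_\alpha$ places $H_\alpha$ in $[\mf{u}_\alpha,\mf{u}_{-\alpha}]$, and $\alpha(H_\alpha)=\tfrac1i\,\alpha(X_1)=2$, which is exactly the defining condition for the coroot. \textbf{The main obstacle} is really just recognizing the symmetric fact $[\mf{h},X_i]=0$ for all three indices (rather than only $i=1$), since it is what forces $\mf{s}^\C$ to act trivially on $X_\alpha$; after that the argument is bookkeeping together with the standard one-dimensionality of root spaces.
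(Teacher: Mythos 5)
Your proof is correct and follows essentially the same route as the paper: the key input in both is that $X_2,X_3$ commute with $\mf{h}$ (from $C_\mf{g}(X_i)=\ker d\alpha_i=\mf{h}\oplus\langle X_i\rangle$ for every $i$), so that $X_\alpha$ commutes with $\mf{s}^\C$ while $\alpha$ vanishes there, and the $\mf{sp}(1)$-relations handle the $X_1$-direction. You merely spell out details the paper leaves implicit (that $\mf{c}$ is a Cartan subalgebra, one-dimensionality of root spaces, the mirror computation for $Y_\alpha$, and the coroot normalization $\alpha(H_\alpha)=2$), which is fine.
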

\begin{proof}
Firstly, $ [H_\alpha,X_\alpha] = 2X_\alpha = \alpha(H_\alpha) X_\alpha $. Since $ X_2, X_3 $ commute with $ \mf{h} $, the vector $ X_\alpha $ commutes with $ \mf{v} $ and in particular with $ \mf{s}^\C $. Likewise, $ \mf{\alpha}_1 $ vanishes on $ \mf{h} $, so that $ \alpha $ vanishes on $ \mf{v} $ and in particular on $ \mf{s}^\C $.
\end{proof}

Let $ \Phi \subset \mf{c}^*$ denote the root system of $ \mf{u} $ with respect to $ \mf{c} $. We consider the $ \Z $-grading of $ \mf{u} $ introduced in \Cref{roots}, viz.
\[ \mf{u}^{(k)} := \bigoplus_{\substack{\beta \in \mf{c}^* \, , \\ c_{\alpha\beta} = k}} \mf{u}_\beta \, . \]

\begin{lemma} \label{0comp}
The $ 0 $-component of the grading is given by $ \mf{u}^{(0)} = \mf{v} \oplus \langle H_\alpha \rangle $.
\end{lemma}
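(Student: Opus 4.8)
The plan is to identify $\mf{u}^{(0)}$ with the complexification of the centralizer of $X_1$ in $\mf{g}$ and then feed in the Boothby--Wang description of that centralizer. First I would recall from \Cref{roots} that $\mf{u}^{(0)}$ is by definition the $0$-eigenspace of $\ad(H_\alpha)$, that is $\mf{u}^{(0)} = \ker \ad_\mf{u}(H_\alpha) = C_\mf{u}(H_\alpha)$. Since $H_\alpha = \tfrac{1}{i} X_1 = -iX_1$ is a nonzero scalar multiple of $X_1$, the operators $\ad(H_\alpha)$ and $\ad(X_1)$ have the same kernel, so already $\mf{u}^{(0)} = C_\mf{u}(X_1)$.

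Next I would check that the centralizer commutes with complexification. Because $X_1 \in \mf{g}$, the operator $\ad_\mf{u}(X_1)$ is just the $\C$-linear extension of the real operator $\ad_\mf{g}(X_1)$ under the decomposition $\mf{u} = \mf{g} \oplus i\mf{g}$. Writing a general element as $Z = A + iB$ with $A,B \in \mf{g}$, one has $\ad(X_1)Z = [X_1,A] + i[X_1,B]$, which vanishes precisely when $A,B \in C_\mf{g}(X_1)$. Hence $C_\mf{u}(X_1) = C_\mf{g}(X_1) \oplus i\,C_\mf{g}(X_1) = C_\mf{g}(X_1)^\C$.

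Finally I would substitute the identity $C_\mf{g}(X_1) = \ker d\alpha_1 = \mf{h} \oplus \langle X_1 \rangle$ recorded in the text just before \autoref{reeb}. Complexifying gives $\mf{u}^{(0)} = (\mf{h} \oplus \langle X_1 \rangle)^\C = \mf{h}^\C \oplus \C X_1 = \mf{v} \oplus \langle H_\alpha \rangle$, where I use $\mf{v} = \mf{h}^\C$ and $\langle H_\alpha \rangle = \C H_\alpha = \C X_1$. The sum remains direct after complexification because $X_1 \notin \mf{h}$: indeed $\alpha_1$ vanishes on $\mf{h}$ while $\alpha_1(X_1) = B(\wt{X_1},X_1) = 1$, so $\mf{h} \cap \R X_1 = 0$ and therefore $\mf{v} \cap \langle H_\alpha \rangle = 0$ as well.

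The argument is essentially a bookkeeping identification, so I do not expect a serious obstacle. The only point requiring a moment's care is the interchange of centralizer and complexification, which I would justify through the explicit $\mf{u} = \mf{g} \oplus i\mf{g}$ computation above rather than treat as folklore; everything else is immediate from the normalizations fixed in the preceding paragraphs.
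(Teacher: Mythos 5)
Your proposal is correct and follows exactly the paper's own route: the paper's proof is the single chain of equalities $\mf{u}^{(0)} = \ker \ad_{H_\alpha} = C_\mf{u}(H_\alpha) = \mf{v} \oplus \langle H_\alpha \rangle$, which compresses precisely the steps you spell out (passing from $H_\alpha$ to $X_1$, commuting centralizer with complexification, and invoking $C_\mf{g}(X_1) = \ker d\alpha_1 = \mf{h} \oplus \langle X_1 \rangle$). Your version merely makes explicit the bookkeeping the paper leaves implicit.
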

\begin{proof} $ \mf{u}^{(0)} = \ker \ad_{H_\alpha} = C_\mf{u}(H_\alpha) = \mf{v} \oplus \langle H_\alpha \rangle $.
\end{proof}

\begin{lemma} \label{pm2comp}
The $ \pm 2 $-component of the grading is given by $ \mf{u}^{(\pm 2)} = \mf{u}_{\pm \alpha} $.
\end{lemma}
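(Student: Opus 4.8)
The plan is to avoid the metric entirely at this stage and instead exploit the $\mathfrak{sl}(2,\C)$-representation theory of $\mf{s}_\alpha = \langle X_\alpha, Y_\alpha, H_\alpha\rangle$ acting on $\mf{u}$ by the adjoint representation. Since $\mf{u}^{(k)}$ is exactly the $k$-eigenspace of $\ad_{H_\alpha}$, these eigenspaces are the weight spaces of this $\mathfrak{sl}(2,\C)$-module, and the inclusions $\mf{u}_\alpha = \langle X_\alpha\rangle \subseteq \mf{u}^{(2)}$ and $\mf{u}_{-\alpha} = \langle Y_\alpha\rangle \subseteq \mf{u}^{(-2)}$ are immediate from $[H_\alpha,X_\alpha]=2X_\alpha$ and $[H_\alpha,Y_\alpha]=-2Y_\alpha$. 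It therefore suffices to show $\dim\mf{u}^{(\pm2)}=1$; by symmetry I treat only $\mf{u}^{(2)}$.

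First I would decompose $\mf{u} = \bigoplus_m V_m^{\oplus a_m}$ into irreducible $\mf{s}_\alpha$-modules, where $V_m$ has highest weight $m$ (so $\dim V_m = m+1$, with weights $m,m-2,\dots,-m$, each of multiplicity one). In such a module the weight $0$ occurs precisely when $m$ is even, and the weight $2$ occurs precisely when $m$ is even and $m\ge2$; hence $\dim\mf{u}^{(0)} = \sum_{m\ \mathrm{even}} a_m$ and $\dim\mf{u}^{(2)} = \sum_{m\ \mathrm{even},\ m\ge2} a_m$, so that $\dim\mf{u}^{(2)} = \dim\mf{u}^{(0)} - a_0$, where $a_0$ is the multiplicity of the trivial module.

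The two inputs that pin down $a_0$ are \autoref{0comp} and the fact that $\mf{v}$ commutes with $\mf{s}_\alpha$. Indeed, $\mf{s}_\alpha\subseteq\mf{k}^\C$ while $\mf{v}=\mf{h}^\C$, and $\mf{h}$ centralises $\mf{k}$; thus $\mf{v}$ is a sum of $\dim\mf{v}$ trivial $\mf{s}_\alpha$-modules, giving $a_0\ge\dim\mf{v}$. On the other hand $H_\alpha=[X_\alpha,Y_\alpha]$ is the weight-$0$ vector of the adjoint copy $\mf{s}_\alpha = V_2$ and is not annihilated by $\ad_{Y_\alpha}$, so it is not a trivial summand. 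Together with \autoref{0comp}, which gives $\dim\mf{u}^{(0)}=\dim\mf{v}+1$, the identity $\dim\mf{u}^{(0)} = a_0 + a_2 + \sum_{m\ \mathrm{even},\ m\ge4} a_m$ combined with $a_0\ge\dim\mf{v}$ and $a_2\ge1$ forces $a_0=\dim\mf{v}$, $a_2=1$ and $a_m=0$ for every even $m\ge4$. Consequently $\dim\mf{u}^{(2)}=a_2=1$, whence $\mf{u}^{(2)}=\mf{u}_\alpha$, and symmetrically $\mf{u}^{(-2)}=\mf{u}_{-\alpha}$.

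The only delicate point is the weight-multiplicity bookkeeping in the last step: one must verify that the $\dim\mf{v}$ trivial summands coming from $\mf{v}$ and the single weight-$0$ vector $H_\alpha$ of the adjoint copy already exhaust $\mf{u}^{(0)}$, leaving no room for any further even irreducible $V_m$ with $m\ge2$ beyond the adjoint itself. Everything else is routine $\mathfrak{sl}(2,\C)$-theory. I note that a more geometric route is possible—identifying $\ad_{X_1}|_{\mf{g}_1}$ with the horizontal part of $\varphi_1$ via \autoref{lieder} and the identity $d\eta_1(\cdot,\cdot)=2g(\cdot,\varphi_1\cdot)$, so that $\varphi_1^2=-\id$ yields $\ad_{X_1}^2=-\id$ on $\mf{g}_1$ and hence eigenvalues $\pm1$—but this forces one to control the relative normalisation of the metric on $\mf{k}$ and $\mf{g}_1$, which the representation-theoretic argument sidesteps entirely.
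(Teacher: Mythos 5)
Your proof is correct, and it takes a genuinely different route from the paper's. The paper argues by contradiction purely inside the root system: if $ \beta \neq \alpha $ were a root with $ c_{\alpha\beta} = 2 $, then $ \langle \beta,\alpha\rangle > 0 $ forces $ \beta - \alpha \in \Phi $ with $ c_{\alpha(\beta-\alpha)} = 0 $, so $ \mf{u}_{\beta-\alpha} \subset \mf{u}^{(0)} = \mf{v} \oplus \langle H_\alpha \rangle $ by \autoref{0comp}; but then $ [\mf{u}_\alpha, \mf{u}_{\beta-\alpha}] = \mf{u}_\beta $ (the standard fact for roots $ \gamma, \delta, \gamma+\delta $) is incompatible with $ [\mf{u}_\alpha,\mf{v}] = 0 $ and $ [\mf{u}_\alpha, H_\alpha] = \mf{u}_\alpha $. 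You instead run everything through $ \mf{sl}(2,\C) $-module theory: complete reducibility of $ \mf{u} $ under $ \ad(\mf{s}_\alpha) $, the weight pattern of the irreducibles $ V_m $, and the count $ \dim \mf{u}^{(2)} = \dim \mf{u}^{(0)} - a_0 $, pinned down by $ a_0 \geq \dim \mf{v} $ (from $ [\mf{s}_\alpha,\mf{v}] = 0 $), $ a_2 \geq 1 $ (the adjoint copy), and $ \dim \mf{u}^{(0)} = \dim \mf{v} + 1 $ from \autoref{0comp}. Notice that both proofs consume exactly the same two geometric inputs, namely \autoref{0comp} and the commutativity of $ \mf{v} $ with $ \mf{s}_\alpha $; the difference is the processing. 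The paper's argument is shorter but leans on two root-system facts (existence of $ \beta - \alpha $ as a root, and surjectivity of the bracket onto $ \mf{u}_\beta $), which your argument avoids entirely at the cost of invoking Weyl's theorem and $ \mf{sl}_2 $ weight theory. Your version also proves slightly more as a by-product: it determines the full even isotypic structure of $ \mf{u} $ as an $ \mf{s}_\alpha $-module ($ a_0 = \dim\mf{v} $, $ a_2 = 1 $, no even $ V_m $ with $ m \geq 4 $), which is in the same spirit as the condition $ \mf{u}_1 \cong \C^2 \otimes W $ of \autoref{c3Sd} governing the odd part. Your final bookkeeping is sound: the chain $ \dim\mf{v} + 1 = a_0 + a_2 + \sum_{m \geq 4} a_m \geq \dim\mf{v} + 1 $ forces all three equalities simultaneously, and the identification $ \mf{u}^{(\pm 2)} = \mf{u}_{\pm\alpha} $ then follows from \autoref{rootprop} since both sides are one-dimensional.
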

\begin{proof} Suppose there was a root $ \beta \neq \alpha $ such that $ c_{\alpha\beta} = 2 $. Then $ \langle \beta, \alpha \rangle > 0 $ and $ \beta - \alpha $ was a root satisfying $ c_{\alpha(\beta - \alpha)} = 0 $. We would need to have $ [\mf{u}_{\alpha},\mf{u}_{\beta - \alpha}] = \mf{u}_\beta $, but the previous lemma implies $ \mf{u}_{\beta-\alpha} \subset \mf{u}^{(0)} = \mf{v} \oplus \langle H_\alpha \rangle $ and $ [\mf{u}_\alpha,\mf{v}] = 0 $, $ [\mf{u}_\alpha, H_\alpha] = \mf{u}_\alpha $.
\end{proof}
\begin{proposition}\label{gusimple}
The Lie algebras $ \mf{g} $ and $ \mf{u} $ are simple.
\end{proposition}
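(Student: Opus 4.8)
The plan is to prove simplicity of $\mf{u}$ (whence $\mf{g} = \mf{u} \cap$ compact form is simple) by exploiting the $\Z$-grading attached to $\alpha$, together with the facts already established: $\mf{u}^{(\pm 2)} = \mf{u}_{\pm\alpha}$ are one-dimensional (\Cref{pm2comp}), $\mf{u}^{(0)} = \mf{v} \oplus \langle H_\alpha \rangle$ (\Cref{0comp}), and $\alpha$ is a genuine root (\Cref{rootprop}). A semisimple complex Lie algebra is simple if and only if its root system is irreducible, so I would argue that $\Phi$ cannot decompose as an orthogonal disjoint union $\Phi = \Phi' \sqcup \Phi''$ with $\langle \Phi', \Phi''\rangle = 0$.

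\emph{First I would} reduce simplicity to a connectivity statement about $\alpha$. Suppose for contradiction that $\Phi = \Phi' \sqcup \Phi''$ is a nontrivial orthogonal decomposition, and say $\alpha \in \Phi'$. Every $\beta \in \Phi''$ is then perpendicular to $\alpha$, so $c_{\alpha\beta} = 0$ and hence $\mf{u}_\beta \subset \mf{u}^{(0)} = \mf{v} \oplus \langle H_\alpha \rangle$; in fact $\mf{u}_\beta \subset \mf{v}$ since $\beta \neq 0$. Thus the entire subalgebra generated by $\Phi''$ lives inside $\mf{v} = \mf{h}^\C$. The crux is to derive a contradiction from this, and for that I would bring in the geometry rather than pure root combinatorics.

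\emph{The key geometric input} is that $M = G/H$ is 3-Sasakian of dimension $4n+3$, and the decomposition $\mf{u}_1 = \C^2 \otimes W$ forces the isotropy representation on $\mf{g}_1$ (the orthogonal complement) to be nonzero whenever $n \geq 1$, with $\mf{k} \cong \mf{sp}(1)$ acting as the standard $\C^2$. Concretely, the reductive complement $\mf{m} = \mf{k} \oplus \mf{g}_1$ satisfies $\dim_\R \mf{m} = 4n+3$, so $\dim_\C \mf{u}_1 = 4n$ and $W \neq 0$ precisely when $n \geq 1$; the sphere case $n=0$ (where $\mf{u} = \mf{sl}(2,\C)$ is visibly simple) is handled separately. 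For $n \geq 1$, an orthogonal splitting of $\Phi$ that confines $\Phi''$ to $\mf{v}$ would make $\mf{u}$ a direct sum of ideals, one containing $\mf{s}_\alpha$ together with its module $W$ and the other sitting inside $\mf{v}$; but $W$ is a nontrivial irreducible $\mf{v}$-module on which $\mf{v}$ acts faithfully in the relevant factor, so any ideal decomposition of $\mf{v}$ would desynchronize the tensor-product structure $\mf{u}_1 \cong \C^2 \otimes W$ and contradict that $\mf{u}_1$ is an irreducible or at least indecomposable $\mf{u}_0$-module paired with the single copy of $\mf{s}_\alpha$. More cleanly, I would argue that $\mf{u}$ is generated as a Lie algebra by $\mf{s}_\alpha$ and $\mf{u}_1$ together with $\mf{v}$: since $[\mf{u}_\alpha, \mf{u}^{(-1)}] = \mf{u}^{(1)}$ surjects and $W = \mf{u}^{(1)}$ is a faithful $\mf{v}$-module, any ideal containing $\mf{s}_\alpha$ must contain all of $\mf{u}_1$ and, through the brackets $[\mf{u}^{(1)}, \mf{u}^{(-1)}] \supset \mf{v}$-generating elements, all of $\mf{v}$ as well, hence all of $\mf{u}$.

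\emph{The hard part} will be establishing that the brackets $[\mf{u}^{(1)}, \mf{u}^{(-1)}]$ actually generate $\mf{v}$ (equivalently, that $W$ is a faithful $\mf{v}$-module with $[W, W^*] = \mf{v}$), since this is what rules out a stray orthogonal root component hiding in $\mf{v}$. I would prove this by showing that the center $Z(\mf{v})$ and any would-be ideal of $\mf{v}$ must act nontrivially on $W$: if some ideal $\mf{a} \subset \mf{v}$ acted trivially on $W = \mf{u}^{(1)}$, then $\mf{a}$ would commute with $\mf{u}^{(1)}$, with $\mf{u}^{(-1)} = [\mf{u}_{-\alpha}, \mf{u}^{(1)}]$ by \Cref{pm2comp}-type arguments, and with $\mf{s}_\alpha$, hence with all of $\mf{u}$, forcing $\mf{a} \subset Z(\mf{u}) = 0$. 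This faithfulness, combined with the surjectivity $[\mf{u}^{(1)}, \mf{u}^{(-1)}] = \mf{v} \oplus \langle H_\alpha\rangle \cap \mf{v}$ coming from the $\mf{sl}_2$-module structure, closes the argument: $\mf{u}$ has no proper nonzero ideal, so $\mf{u}$ is simple, and therefore so is its compact real form $\mf{g}$.
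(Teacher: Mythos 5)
Your opening reduction is sound and parallels the paper: if $\Phi$ decomposed orthogonally as $\Phi' \sqcup \Phi''$ with $\alpha \in \Phi'$, then every root space of $\Phi''$ lies in $\mf{u}^{(0)} = \mf{v} \oplus \langle H_\alpha \rangle$ (\autoref{0comp}) and hence in $\mf{v}$, so the corresponding ideal of $\mf{g}$ sits inside the isotropy algebra $\mf{h}$. The genuine gap is in how you dispose of such an ideal. Your contradiction rests on the claim that an ideal $\mf{a} \subset \mf{v}$ acting trivially on $\mf{u}^{(1)}$ ``commutes with all of $\mf{u}$, forcing $\mf{a} \subset Z(\mf{u}) = 0$.'' This fails twice over. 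First, $\mf{a}$ need not commute with $\mf{v}$ itself (it is only an ideal there), so the most you can conclude is that $\mf{a}$ is an ideal of $\mf{u}$ --- and ruling out proper nonzero ideals is exactly what is to be proven, so the argument is circular. Second, the decomposition $\mf{u} = \mf{s}_\alpha \oplus \mf{v} \oplus \mf{u}^{(1)} \oplus \mf{u}^{(-1)}$ that you implicitly use, like the tensor structure $\mf{u}_1 \cong \C^2 \otimes W$ and the bound $|c_{\alpha\beta}| \leq 2$, is not available at this stage of the deconstruction: these follow from maximality of $\alpha$ (\autoref{maxroot}), which the paper establishes only \emph{after} this proposition, and the possible $\mf{u}^{(\pm 3)}$-components (the short-root-of-$\mf{g}_2$ scenario) require all of \Cref{G2} to exclude. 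Indeed, simplicity is an input to those later steps, so they cannot be inputs here.

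More fundamentally, no purely algebraic argument from the facts proved so far can succeed. The data ``$\alpha$ is a root, $\mf{u}^{(0)} = \mf{v} \oplus \langle H_\alpha \rangle$, $\mf{u}^{(\pm 2)} = \mf{u}_{\pm\alpha}$'' are all satisfied by $\mf{u} = \mf{sl}(2,\C) \oplus \mf{sl}(2,\C)$ with $\alpha$ a root of the first summand and $\mf{v}$ the second summand: there $\mf{u}^{(\pm 1)} = 0$, every ideal of $\mf{v}$ acts trivially on $W = 0$, and $\mf{u}$ is not simple. What rules this out is geometry, and this is the paper's key step, which your proposal lacks: an ideal $\mf{g}_j \subset \mf{h} = \mf{g}_p$ is $\Ad(G)$-invariant, hence contained in $\Ad_g(\mf{g}_p) = \mf{g}_{g \cdot p}$ for every $g \in G$; its fundamental vector fields therefore vanish identically on $M$, and almost effectiveness of the action forces $\mf{g}_j = 0$. (Your aside that the case $n = 0$ has ``visibly'' $\mf{u} = \mf{sl}(2,\C)$ hides the same gap: to see $\mf{v} = 0$ there you again need effectiveness.) If you replace your faithfulness claim by this effectiveness argument, your root-system framing becomes essentially the paper's proof, which runs the same two steps --- ideals away from $\alpha$ land in $\mf{h}$ (the paper gets this from $\mf{g}_j$ commuting with $X_2, X_3$ and $C_\mf{g}(X_i) = \mf{h} \oplus \langle X_i \rangle$, you get it from the grading; both are fine), then effectiveness kills them.
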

\begin{proof}
The semisimple Lie algebra $ \mf{g} $ decomposes as a direct sum $ \mf{g} = \mf{g}_1 \oplus \ldots \oplus \mf{g}_m $ of simple ideals. Since the Killing form of $ \mf{g} $ is negative definite, the same applies to the ideals $ \mf{g}_i $, which thus cannot be the realification of a complex Lie algebra. Therefore, their complexifications $ \mf{u}_i := \mf{g}_i^\C $ are also simple and yield a decomposition $ \mf{u} = \mf{u}_1 \oplus \ldots \oplus \mf{u}_m $ into simple ideals \cite[Theorem 6.94]{Knap}. Accordingly, the root system is a disjoint union $ \Phi = \Phi_1 \sqcup \ldots \sqcup \Phi_m $. We claim that $ \mf{g} = \mf{g}_i $ (and hence $ \mf{u} = \mf{u}_i $), where $ i $ is the unique index such that $ \alpha \in \Phi_i $. \\
For $ j \neq i $, the ideal $ \mf{g}_j $ commutes with $ \mf{g}_i \supset (\mf{u}_\alpha \oplus \mf{u}_{-\alpha}) \cap \mf{g} \ni X_2,X_3 $, so $ \mf{g}_j \subset \mf{h} = \mf{g}_p $. Since $ \mf{g}_j $ is an ideal and $ G $ is connected, it follows that $ \mf{g}_j = \Ad_g(\mf{g}_j) \subset \Ad_g(\mf{g}_p) = \mf{g}_{g \cdot p} $ for all $ g \in G $. Because the $ G $-action is almost effective, we must have $ \mf{g}_j = 0 $.
\end{proof}

It is well-known that for any root system $ \Phi $ and any roots $ \alpha,\beta \in \Phi $, the Cartan numbers are bounded by $ |c_{\alpha\beta}| \leq 3 $. Furthermore, the only \emph{irreducible} case where $ |c_{\alpha\beta}| = 3 $ occurs is when $ \mf{g} = \mf{g}_2 $, $ \alpha $ is one of the short roots and $ \beta $ is the long root that forms an angle of 150 (210) degrees with $ \alpha $. We relegate the proof that this case cannot actually occur in our situation to the next section. \\
In all the remaining cases, we have therefore shown that $ \alpha $ is a maximal root (cf. \autoref{maxroot}), so we may carry out the construction from \Cref{const}. We now prove that the 3-Sasakian structure obtained this way indeed coincides with the original one we started with. We simplify the analysis by studying the reductive complement $ \mf{m} := \mf{h}^{\perp_B} $.

\begin{lemma} \label{decomp} The reductive complement $ \mf{m} $ decomposes $ B $-orthogonally as
\[ \mf{m} = \langle X_1,X_2,X_3 \rangle \oplus \bigoplus_{\substack{\beta \in \Phi \, , \\ c_{\alpha\beta} = 1}} (\mf{u}_\beta \oplus \mf{u}_{-\beta}) \cap \mf{g} =: \mf{k} \oplus \mf{g}_1 \, . \]
\end{lemma}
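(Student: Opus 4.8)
The plan is to complexify the statement, reduce it to a short computation with the $\Z$-grading of \Cref{roots}, and then descend to the compact real form $\mf{g}$ via conjugation. Extend $B$ $\C$-bilinearly to $\mf{u}$. Since $B$ is nondegenerate and $\mf{h}$ is a real subspace, taking orthogonal complements commutes with complexification, so $\mf{m}^\C = (\mf{h}^{\perp_B})^\C = \mf{v}^{\perp_B}$ with $\mf{v} = \mf{h}^\C$. The first ingredient is that the grading is $B$-orthogonal in the sense that $B(\mf{u}^{(k)},\mf{u}^{(\ell)}) = 0$ unless $k+\ell = 0$: this follows because $B(\mf{u}_\beta,\mf{u}_\gamma) = 0$ unless $\beta+\gamma = 0$, together with $c_{\alpha,-\beta} = -c_{\alpha\beta}$. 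In particular $\mf{u}^{(\pm 1)}$ and $\mf{u}^{(\pm 2)}$ are $B$-orthogonal to $\mf{u}^{(0)} \supseteq \mf{v}$ and hence automatically lie in $\mf{v}^{\perp_B}$.

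Next I would pin down the remaining degree-zero direction. By \autoref{0comp} we have $\mf{u}^{(0)} = \mf{v}\oplus\langle H_\alpha\rangle$, and the key point is that this splitting is itself $B$-orthogonal, i.e.\ $H_\alpha \perp_B \mf{v}$. To see this, pick $t_\alpha \in \mf{c}$ with $B(t_\alpha,\cdot) = \alpha$ on $\mf{c}$; then $B(H_\alpha,Z)$ is a nonzero multiple of $\alpha(Z)$ for every $Z\in\mf{c}$. Since $\mf{c} = \mf{s}^\C \oplus \langle H_\alpha\rangle$ with $\mf{s}^\C = \ker\alpha\cap\mf{c}$ (here $\alpha(H_\alpha)=2\neq 0$ and $\alpha|_{\mf{s}^\C}=0$, cf.\ \autoref{rootprop}), and since $\mf{v}\cap\mf{c} = \mf{s}^\C$, the Cartan part of any $v\in\mf{v}$ is $\alpha$-annihilated, while its root-space part lies in root spaces $\mf{u}_\beta$ ($\beta\neq 0$) and is $B$-orthogonal to $H_\alpha\in\mf{c}$. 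Hence $B(H_\alpha,\mf{v})=0$. Combining this with the previous paragraph, and noting $\dim\mf{u}^{(0)} = \dim\mf{v}+1$ so that $\langle H_\alpha\rangle$ exhausts $\mf{v}^{\perp_B}\cap\mf{u}^{(0)}$, I obtain the $B$-orthogonal decomposition $\mf{m}^\C = \langle H_\alpha\rangle \oplus \mf{u}^{(\pm 1)}\oplus\mf{u}^{(\pm 2)}$. By \autoref{pm2comp}, $\mf{u}^{(\pm 2)} = \mf{u}_{\pm\alpha}$, so $\langle H_\alpha\rangle\oplus\mf{u}^{(2)}\oplus\mf{u}^{(-2)} = \mf{s}_\alpha = \mf{k}^\C$, whereas $\mf{u}^{(1)}\oplus\mf{u}^{(-1)} = \bigoplus_{c_{\alpha\beta}=1}(\mf{u}_\beta\oplus\mf{u}_{-\beta}) =: \mf{g}_1^\C$. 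Thus $\mf{m}^\C = \mf{k}^\C\oplus\mf{g}_1^\C$, $B$-orthogonally.

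It remains to descend to $\mf{g}$. Let $\sigma$ be the conjugation of $\mf{u}$ relative to $\mf{g}$; because $\mf{t}\subset\mf{g}$ and the roots take purely imaginary values on $\mf{t}$ (the Killing form being negative definite on the compact $\mf{g}$), one checks $\sigma(\mf{u}_\beta) = \mf{u}_{-\beta}$ for every $\beta\in\Phi$. Since $\mf{k}$ is a real subspace, $\mf{k}^\C$ is $\sigma$-stable; and since the index set $\{\beta : c_{\alpha\beta}=1\}\cup\{\beta : c_{\alpha\beta}=-1\}$ is invariant under $\beta\mapsto-\beta$, so is $\mf{g}_1^\C$. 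Intersecting the $\sigma$-stable orthogonal decomposition of $\mf{m}^\C$ with $\mf{g}$ then yields $\mf{m} = (\mf{k}^\C\cap\mf{g})\oplus(\mf{g}_1^\C\cap\mf{g}) = \mf{k}\oplus\mf{g}_1$ (the first intersection being all of $\mf{k}$ by a dimension count), and the $B$-orthogonality descends by restriction. The main obstacle is the single genuine computation $H_\alpha\perp_B\mf{v}$, equivalently the identification of the degree-zero line $\mf{v}^{\perp_B}\cap\mf{u}^{(0)}$ with $\langle H_\alpha\rangle$; everything else is grading bookkeeping and the standard behaviour of the compact-form conjugation on root spaces.
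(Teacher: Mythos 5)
Your proof is correct and follows essentially the same route as the paper: both arguments rest on the $B$-orthogonality of root spaces ($\mf{u}_\beta \perp_B \mf{u}_\gamma$ whenever $\beta+\gamma\neq 0$) combined with a dimension count via Lemmata \ref{0comp} and \ref{pm2comp}. The differences are only bookkeeping: you complexify and descend to $\mf{g}$ via the conjugation at the end (the paper works with the real intersections throughout), and you re-derive $H_\alpha\perp_B\mf{v}$ from the coroot--Killing-dual relation, where the paper simply invokes the previously established fact that the forms $\alpha_i = B(X_i,\cdot)/B(X_i,X_i)$ vanish on $\mf{h}$.
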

\begin{proof}
$ \supset $: Clearly, $ X_1,X_2,X_3 $ are $ B $-orthogonal to $ \mf{h} $. For $ \beta, \gamma \in \mf{c}^* $ with $ \beta + \gamma \neq 0 $, the subspaces $ \mf{u}_\beta $ and $ \mf{u}_\gamma $ are $ B^\C $-orthogonal. This implies that for all $ \beta \in \Phi $ with $ c_{\alpha\beta} =1 $, the subspaces $ \mf{u}_{\pm\beta} $ are also $ B^\C $-orthogonal to $ \mf{h} $. \\
$ \subset $: By Lemmata \ref{0comp} and \ref{pm2comp}, both sides of the equation have dimension $ 4n+3 $.
\end{proof}

We can compare the structure tensors of the two 3-Sasakian structures in question via the isomorphism $ \psi :\mf{m} \to T_pM , \, X \mapsto \overline{X}_p $. \autoref{reeb} has already shown that the vectors $ X_i $ correspond to the Reeb vector fields $ \xi_i $. Looking back at \autoref{goal}, equality of the contact forms is equivalent to the following

\begin{lemma}
$ \alpha_i = -B(X_i, \cdot)/4(n+2) $.
\end{lemma}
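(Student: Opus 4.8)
The plan is to reduce the identity to a single trace computation. Since $\wt{X_i}$ is the Killing dual of $\alpha_i$ and $X_i = \wt{X_i}/B(\wt{X_i},\wt{X_i})$, we have $\wt{X_i} = B(\wt{X_i},\wt{X_i})\,X_i$ and hence $\alpha_i = B(\wt{X_i},\cdot) = B(\wt{X_i},\wt{X_i})\,B(X_i,\cdot)$. The proportionality constant $B(\wt{X_i},\wt{X_i})$ equals $1/B(X_i,X_i)$; indeed, by \autoref{reeb} we have $\alpha_i(X_i) = \eta_i((\xi_i)_p) = g_p((\xi_i)_p,(\xi_i)_p) = 1$, so evaluating $\alpha_i = B(\wt{X_i},\wt{X_i})\,B(X_i,\cdot)$ on $X_i$ forces $B(\wt{X_i},\wt{X_i})\,B(X_i,X_i) = 1$. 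It therefore suffices to show $B(X_i,X_i) = -4(n+2)$.

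First I would reduce to the case $i=1$. The vectors $X_1,X_2,X_3$ span the subalgebra $\mf{k}\cong\mf{sp}(1)$, and $B|_{\mf{k}}$ is an $\ad(\mf{k})$-invariant symmetric bilinear form on the simple Lie algebra $\mf{k}$; any such form is a multiple of the Killing form of $\mf{k}$, which, by the relations $[X_i,X_j]=2\varepsilon_{ijk}X_k$, is diagonal in the basis $X_1,X_2,X_3$ with equal diagonal entries. Hence $B(X_i,X_i)$ is independent of $i$, and it is enough to compute $B(X_1,X_1)$.

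Next I would pass to the complexification. Since $\mf{u}=\mf{g}^\C$, the Killing form $B$ extends $\C$-bilinearly to the Killing form $B^\C$ of $\mf{u}$, and $X_1 = iH_\alpha$ gives $B(X_1,X_1) = -B^\C(H_\alpha,H_\alpha)$. Now $B^\C(H_\alpha,H_\alpha) = \tr_{\mf{u}}\big(\ad(H_\alpha)^2\big)$, and since $\ad(H_\alpha)$ acts on each root space $\mf{u}_\beta$ by the scalar $c_{\alpha\beta}$ and vanishes on $\mf{c}$, this trace equals $\sum_{\beta\in\Phi} c_{\alpha\beta}^2 = \sum_k k^2\,\dim_\C\mf{u}^{(k)}$. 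Because $\alpha$ is maximal, \autoref{maxroot} confines $k$ to $\{-2,-1,0,1,2\}$; by \autoref{pm2comp} the $k=\pm 2$ spaces are the one-dimensional $\mf{u}_{\pm\alpha}$, contributing $2\cdot 4 = 8$, while the $k=0$ part contributes nothing, so that $B^\C(H_\alpha,H_\alpha) = 8 + \dim_\C(\mf{u}^{(1)}\oplus\mf{u}^{(-1)})$.

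Finally I would close with a dimension count. By \autoref{decomp}, $\mf{u}^{(1)}\oplus\mf{u}^{(-1)}$ is the complexification of $\mf{g}_1$, so $\dim_\C(\mf{u}^{(1)}\oplus\mf{u}^{(-1)}) = \dim_\R\mf{g}_1 = (4n+3)-3 = 4n$. Thus $B^\C(H_\alpha,H_\alpha) = 8+4n = 4(n+2)$ and $B(X_1,X_1) = -4(n+2)$, yielding $\alpha_i = -B(X_i,\cdot)/4(n+2)$ for all $i$. I expect no serious obstacle: the only points requiring care are the bookkeeping of the $\Z$-grading (that the $k=\pm 1$ spaces account exactly for $\mf{g}_1$ via \autoref{decomp} and \autoref{0comp}) and the sign produced by $X_1 = iH_\alpha$ when $B$ is complexified.
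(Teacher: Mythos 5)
Your proposal is correct and follows essentially the same route as the paper: the identity is reduced, via the definition of $X_i$, to computing $B(X_i,X_i)$, and then $B(X_1,X_1) = -B^\C(H_\alpha,H_\alpha) = -\tr \ad_{H_\alpha}^2 = -(4\cdot 2 + 4n) = -4(n+2)$ using the $\Z$-grading, maximality of $\alpha$ (with \autoref{pm2comp}), and the dimension count coming from \autoref{decomp}. The only deviation is the treatment of $i=2,3$: the paper simply remarks that the same argument applies to a maximal torus of the form $\mf{s}\oplus\langle X_2\rangle$, whereas you deduce that $B(X_i,X_i)$ is independent of $i$ from the uniqueness, up to scale, of $\ad$-invariant symmetric bilinear forms on the simple algebra $\mf{k}\cong\mf{sp}(1)$ --- a slightly more self-contained alternative that avoids rerunning the root-space setup for a second torus.
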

\begin{proof}
By definition, $ \alpha_i = B(X_i, \cdot) /B(X_i,X_i) $. We have
\begin{align*}
B(X_1,X_1) &= B^\C(iH_\alpha,iH_\alpha) = - B^\C(H_\alpha,H_\alpha) = -\tr \ad_{H_\alpha}^2 \\
&= - 4 \cdot (\dim \mf{u}^{(2)} + \dim \mf{u}^{(-2)}) - 1 \cdot (\dim \mf{u}^{(1)} + \dim \mf{u}^{(-1)}) = -4(n+2) \, . 
\end{align*}
We also have $ B(X_2,X_2) = B(X_3,X_3) = - 4(n+2) $, since we could have used the same arguments for a maximal torus of e.g.~the form $ \mf{s} \oplus \langle X_2\rangle $.
\end{proof}

Because the contact forms coincide, so do their differentials, which are the fundamental 2-forms. Since the Riemannian metrics are determined by the fundamental 2-forms together with the almost complex structures, it suffices to show that the latter coincide. Let $ L_i : \mf{m} \to \mf{m} $ denote the $ \Ad(H) $-invariant endomorphism of $ \mf{m} $ corresponding to the $ G $-invariant endomorphism field $ \varphi_i $, i.e.~$ L_i = \psi^{-1} \circ (\varphi_i)_p \circ \psi $. Looking back at \autoref{goal}, the claim reduces to showing that
\[ L_i|_\mf{k} = \frac{1}{2} \ad_{X_i} \, , \qquad L_i|_{\mf{g}_1} = \ad_{X_i} \, . \]
The first equation is clear from \autoref{reeb} and the 3-Sasaki equations in \autoref{3Sdef}. The following lemma shows that $ L_1 $ is not only $ \ad(\mf{h}) $-invariant, but even $ \ad(\mf{t}) $-invariant:

\begin{lemma} \label{commute}
The endomorphisms $ L_1 $ and $ \ad_{X_1} $ commute on $ \mf{g}_1 $.
\end{lemma}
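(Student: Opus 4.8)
The plan is to exploit that the fundamental vector field $\overline{X_1}$ is simultaneously a Killing field and an infinitesimal $3$-Sasakian automorphism, so that $\mathcal{L}_{\overline{X_1}}\varphi_1 = 0$, and to convert this identity into the desired commutation at $p$. The difficulty, which was absent in the $\ad(\mf{h})$-invariance of $L_1$, is that $\overline{X_1}$ does \emph{not} vanish at $p$ (indeed $\overline{X_1}_p = (\xi_1)_p$ by \autoref{reeb}), so the Lie-derivative condition at $p$ couples the value of the skew endomorphism $A := (\nabla \overline{X_1})_p$ with a covariant-derivative term. First I would record the standard identity for a $(1,1)$-tensor and a vector field,
\[ (\mathcal{L}_{\overline{X_1}} \varphi_1)(V) = (\nabla_{\overline{X_1}} \varphi_1)(V) + \varphi_1(\nabla_V \overline{X_1}) - \nabla_{\varphi_1 V} \overline{X_1} \, , \]
and evaluate it at $p$. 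Writing $A := (\nabla \overline{X_1})_p$ and $L_1 = (\varphi_1)_p$, the vanishing of the left-hand side reads $(\nabla_{\overline{X_1}} \varphi_1)_p = [A, L_1]$. Since $(\nabla_V \varphi_1)_p$ depends on $V$ only through $V_p$ and $\overline{X_1}_p = (\xi_1)_p$, the term $(\nabla_{\overline{X_1}} \varphi_1)_p$ equals $(\nabla_{\xi_1} \varphi_1)_p$, which vanishes: this is the standard Sasakian identity $\nabla_{\xi_1} \varphi_1 = 0$ (equivalently, $\mathcal{L}_{\xi_1} \varphi_1 = \nabla_{\xi_1} \varphi_1$ together with the fact that the Reeb flow preserves its own Sasakian structure). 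Hence $[A, L_1] = 0$ on all of $T_pM \cong \mf{m}$.

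The decisive second step is to identify $A|_{\mf{g}_1}$ with a multiple of $\ad_{X_1}|_{\mf{g}_1}$. I would insert the Nomizu formula for the Levi-Civita connection of the invariant metric, which expresses $2g_p(\nabla_{\overline{V}} \overline{X_1}, \overline{Z})$ for $V, Z \in \mf{m}$ as a sum of three bracket terms, and then apply \autoref{adtinv}: the $\ad_{X_1}$-invariance of $g_p$ forces the two terms containing a bracket with $X_1$ to cancel, leaving only the $U$-type term $g_p([Z,V]_{\mf{m}}, X_1)$. For $V, Z \in \mf{g}_1$ the $\Z_2$-grading gives $[\mf{g}_1, \mf{g}_1] \subset \mf{k} \oplus \mf{h}$, so $[Z,V]_{\mf{m}} = [Z,V]_{\mf{k}}$; the explicit normalizations $g_p|_{\mf{k}} = -\tfrac{1}{4(n+2)} B$ and $g_p|_{\mf{g}_1} = -\tfrac{1}{8(n+2)} B$ from \autoref{goal}, combined with $\ad$-invariance of $B$, then collapse the right-hand side to $\pm 2 g_p([X_1,V], Z)$. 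This yields $A|_{\mf{g}_1} = \pm \ad_{X_1}|_{\mf{g}_1}$; the same computation with $Z \in \mf{k}$ uses $[\mf{k}, \mf{g}_1] \subset \mf{g}_1 \perp_{g_p} X_1$ to confirm that $A$ preserves $\mf{g}_1$.

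Finally, since both $L_1$ and $\ad_{X_1}$ preserve $\mf{g}_1$ (for $L_1$ this follows from skew-symmetry of $\varphi_1$ together with $\varphi_1(\mf{k}) \subset \mf{k}$, for $\ad_{X_1}$ from the grading), I would restrict the identity $[A, L_1] = 0$ to $\mf{g}_1$ and substitute $A|_{\mf{g}_1} = \pm \ad_{X_1}|_{\mf{g}_1}$ to obtain $[\ad_{X_1}, L_1] = 0$ on $\mf{g}_1$, as claimed. I expect the main obstacle to be the middle step: correctly accounting for the non-vanishing of $\overline{X_1}$ at $p$ through the covariant-derivative term and recognizing that it is annihilated by $\nabla_{\xi_1} \varphi_1 = 0$. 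The remaining work — fixing the sign convention in the Nomizu formula and tracking the normalization constants through the cancellation — is routine and does not affect the commutation conclusion.
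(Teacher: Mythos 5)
Your first step is sound, and it is a genuinely different mechanism from the paper's: from $\mathcal{L}_{\overline{X_1}}\varphi_1 = 0$, the standard formula for the Lie derivative of a $(1,1)$-tensor along a vector field, and the Sasakian identity $\nabla_{\xi_1}\varphi_1 = 0$, the skew endomorphism $A := (\nabla\overline{X_1})_p$ commutes with $L_1$ on all of $T_pM$. The paper never considers $A$; it instead uses that $\ad_{X_1}$ is an almost complex structure on $\mf{g}_1$ compatible with the fundamental $2$-form $d\eta_1$, together with \autoref{adtinv}, to obtain $L_1 = -\ad_{X_1}\circ L_1\circ\ad_{X_1}$ on $\mf{g}_1$, whence commutativity follows from $\ad_{X_1}^2 = -\id$.

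Your middle step, however, contains a genuine gap. First, it is circular: at this point of \Cref{univ} the restriction of $g_p$ to $\mf{g}_1$ is precisely what is still unknown --- the paper has reduced the equality of the two metrics to the equality of the almost complex structures, and \autoref{commute} is a step towards the latter --- so the normalization $g|_{\mf{g}_1\times\mf{g}_1} = -\tfrac{1}{8(n+2)}B$ from \autoref{goal} may not be invoked; the only available facts are the $\Ad(H)$- and (by \autoref{adtinv}) $\ad_{X_1}$-invariance of $g_p$, the relation $d\eta_1 = 2g(\cdot,\varphi_1\cdot)$, and \autoref{lieder}. Second, the computation does not come out as you describe: in the Koszul formula for the three Killing fields $\overline{V},\overline{X_1},\overline{Z}$ with $V,Z\in\mf{g}_1$, \autoref{adtinv} makes the two terms containing a bracket with $X_1$ \emph{combine} to $2g_p(\overline{[X_1,V]}_p,\overline{Z}_p)$ rather than cancel, while the remaining term, pairing $[V,Z]_{\mf{m}}\in\mf{k}$ against $X_1$, equals by \autoref{lieder} a multiple of $d\eta_1(\overline{V}_p,\overline{Z}_p) = 2g_p(\overline{V}_p,\varphi_1\overline{Z}_p)$; it produces $L_1$, not $\ad_{X_1}$ (converting it into $\ad_{X_1}$ is exactly where the unproven proportionality of $g_p|_{\mf{g}_1}$ to $B$ would be needed). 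Carrying this out with the paper's conventions gives the correct identity $A|_{\mf{g}_1} = \ad_{X_1} - L_1$, and your claimed $A|_{\mf{g}_1} = \pm\ad_{X_1}$ is false: on $S^{4n+3} = Sp(n+1)/Sp(n)$ with its standard structure, where $\overline{X_1}$ is the linear field $x\mapsto \diag(i,0_n)\,x$, the derivative $\nabla\overline{X_1}$ vanishes on the horizontal space at $p$ (consistent with $\ad_{X_1}-L_1 = 0$ there), whereas $\pm\ad_{X_1}$ is invertible on $\mf{g}_1$. Indeed, under your own normalization assumption the computation returns $A|_{\mf{g}_1}=0$, so your final substitution would be vacuous.

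The strategy does survive the correction, and in fact becomes shorter: both $A$ and $L_1$ preserve $\psi(\mf{g}_1)$, as you argue, so restricting your first identity to $\mf{g}_1$ and inserting $A|_{\mf{g}_1} = \ad_{X_1}-L_1$ yields $0 = [A,L_1] = [\ad_{X_1}-L_1,\,L_1] = [\ad_{X_1},L_1]$. More generally, any identity $A|_{\mf{g}_1} = c_1\ad_{X_1} + c_2 L_1$ with $c_1\neq 0$ would suffice, so the sign conventions are immaterial; but the identification of $A$ with $\pm\ad_{X_1}$, as written, is both circular and incorrect, and without the repair the proof does not close.
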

\begin{proof}
For all $ Y, Z \in \mf{g}_1 $, we have
\begin{align*}
2g_p(\overline{Y}_p, \overline{L_1Z}_p) &= d\eta_1(\overline{Y}_p,\overline{Z}_p) \\
&= d\eta_1(\overline{[X_1,Y]}_p, \overline{[X_1,Z]}_p) \\
&= 2g_p(\overline{[X_1,Y]}_p, \overline{L_1[X_1,Z]}_p) \\
&= - 2 g_p(\overline{Y}_p, \overline{[X_1,L_1[X_1,Z]}_p) \, .
\end{align*}
In the second equation, we used that $ \ad_{X_1} $ corresponds to an almost complex structure on $ \mf{g}_1 $ which is compatible with the common fundamental 2-form $ d\eta_1 $. The last equation follows from \autoref{adtinv}. This shows that $ L_1 = - \ad_{X_1} \circ L_1 \circ \ad_{X_1} $ on $ \mf{g}_1 $ and consequently, $ \ad_{X_1} \circ L_1 = - \ad_{X_1}^2 \circ L_1 \circ \ad_{X_1} = L_1 \circ \ad_{X_1} $.
\end{proof}

\begin{proposition}
The almost complex structures of the two 3-Sasakian structures in question coincide.
\end{proposition}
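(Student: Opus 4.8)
The plan is to transport everything to the reductive complement via $\psi\colon\mf{m}\to T_pM$ and verify the algebraic identities characterising the constructed endomorphisms in \autoref{goal}. As observed above, this reduces to $L_i|_{\mf{k}}=\tfrac12\ad_{X_i}$, which is immediate from \autoref{reeb} and the relations in \autoref{3Sdef}, together with $L_i|_{\mf{g}_1}=\ad_{X_i}$. Since the spaces $\mf{k}$ and $\mf{g}_1=(\mf{h}\oplus\mf{k})^{\perp_B}$ do not depend on the chosen index, and since all preceding arguments go through verbatim with the maximal torus $\mf{s}\oplus\langle X_i\rangle$ in place of $\mf{s}\oplus\langle X_1\rangle$, it suffices to prove $L_1=\ad_{X_1}$ on $\mf{g}_1$.

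First I would observe that $\ad_{X_1}$ is itself an almost complex structure on $\mf{g}_1$: writing $X_1=iH_\alpha$, the operator $\ad_{X_1}=i\,\ad_{H_\alpha}$ acts as multiplication by $\pm i$ on $\mf{u}^{(\pm1)}$, so $\ad_{X_1}^2=-\id$ on $\mf{g}_1^\C=\mf{u}^{(1)}\oplus\mf{u}^{(-1)}$ and hence on $\mf{g}_1$. As $L_1$ likewise satisfies $L_1^2=-\id$ and commutes with $\ad_{X_1}$ by \autoref{commute}, the product $P:=L_1\circ\ad_{X_1}$ is an involution. Consequently $\mf{g}_1$ decomposes into the $\pm1$-eigenspaces $V_{\pm}$ of $P$, with $L_1=-\ad_{X_1}$ on $V_+$ and $L_1=\ad_{X_1}$ on $V_-$. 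The entire task is thus to show $V_+=0$, and this is the step I expect to be the main obstacle, since it is here that the sign information distinguishing the two candidate structures must be extracted.

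To rule out $V_+$ I would exploit positivity of the metric. Combining \autoref{lieder} with the identity $\alpha_1=-B(X_1,\cdot)/4(n+2)$ established above yields, for $Y,Z\in\mf{g}_1$, the formula $d\eta_1(\overline{Y}_p,\overline{Z}_p)=\tfrac{1}{4(n+2)}B(\ad_{X_1}Y,Z)$. On the other hand the Sasaki relation gives $d\eta_1(\overline{Y}_p,\overline{Z}_p)=2g_p(\overline{Y}_p,\overline{L_1Z}_p)$, so substituting $Z=-L_1Y$ and invoking $g_p(\overline{Y}_p,\overline{Y}_p)>0$ produces $B(\ad_{X_1}Y,L_1Y)<0$ for every nonzero $Y\in\mf{g}_1$. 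Now if some $0\neq Y$ lay in $V_+$, then $L_1Y=-\ad_{X_1}Y$, and negative definiteness of $B$ together with invertibility of $\ad_{X_1}$ would force $B(\ad_{X_1}Y,L_1Y)=-B(\ad_{X_1}Y,\ad_{X_1}Y)>0$, a contradiction. Hence $V_+=0$ and $L_1=\ad_{X_1}$ on all of $\mf{g}_1$.

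Assembling the pieces, $L_i=\tfrac12\ad_{X_i}$ on $\mf{k}$ and $L_i=\ad_{X_i}$ on $\mf{g}_1$ for each $i$, so the endomorphism fields $\varphi_i$ agree with those prescribed in \autoref{goal}. The delicate points to watch are the sign bookkeeping just described and the verification that $L_1$ indeed preserves $\mf{g}_1$ — the latter following from $\overline{\mf{g}_1}_p$ being the $g$-orthogonal complement of the Reeb directions, which one checks using $\eta_i(\overline{Y}_p)=-B(X_i,Y)/4(n+2)=0$ for $Y\in\mf{g}_1$.
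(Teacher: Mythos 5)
Your proof is correct and shares the paper's overall skeleton---reduce to proving $L_1=\ad_{X_1}$ on $\mf{g}_1$, invoke \autoref{commute}, and fix the sign by a positivity computation resting on \autoref{lieder} and the identity $\alpha_1=-B(X_1,\cdot)/4(n+2)$---but you carry out the middle step by a genuinely different decomposition. The paper complexifies and works root space by root space: $L_1^\C$ commutes with $\ad(\mf{c})$, hence preserves each one-dimensional $\mf{u}_\beta$, on which it and $\ad_{X_1}^\C$ both act as $\pm i$; equivariance under conjugation then yields $L_1=\pm\ad_{X_1}$ on each real block $(\mf{u}_\beta\oplus\mf{u}_{-\beta})\cap\mf{g}$, and essentially the same positivity computation you use (there applied to $d\eta_1(\overline{[X_1,Y]}_p,\overline{Y}_p)$) excludes the minus sign block by block. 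You instead stay over $\R$: from $L_1^2=\ad_{X_1}^2=-\id$ and commutativity, the composite $P=L_1\circ\ad_{X_1}$ is an involution, and your estimate $B(\ad_{X_1}Y,L_1Y)<0$ together with negative definiteness of $B$ kills its $+1$-eigenspace. Root theory enters your argument only through $\mf{g}_1^\C=\mf{u}^{(1)}\oplus\mf{u}^{(-1)}$ (which gives $\ad_{X_1}^2=-\id$), so you avoid both the one-dimensionality of the root spaces and the conjugation step; this makes your version slightly more elementary and coordinate-free, while the paper's version records exactly where the sign ambiguity sits. A further merit of your write-up is that you make explicit two facts the paper uses tacitly: that $L_1$ preserves $\mf{g}_1$ (because $\psi(\mf{g}_1)$ equals the horizontal space at $p$, by $B$-orthogonality of $\mf{g}_1$ to $\mf{k}$ and a dimension count) and that $L_1^2=-\id$ there. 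Your reduction of the cases $i=2,3$ to $i=1$ via the maximal torus $\mf{s}\oplus\langle X_i\rangle$ and the index-independence of $\mf{k}$ and $\mf{g}_1$ matches the paper's closing remark.
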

\begin{proof}
Let $ \beta $ be a root such that $ c_{\alpha\beta} = 1 $. Since $ \ad_{H_\alpha} $ leaves $ \mf{u}_\beta $ invariant, so does $ \ad_{X_1}^\C $. Because $ L_1 $ is $ \ad(\mf{t}) $-invariant by \autoref{commute}, $ L_1^\C $ is $ \ad(\mf{c}) $-invariant and thus also leaves $ \mf{u}_\beta $ invariant. Now, $ \ad_{X_1}^\C $ and $ L_1^\C $ are $ \C $-linear maps on the one-dimensional subspace $ \mf{u}_\beta $ which square to $ -\id $, so they must be given by multiplication with $ \pm i $. Since both endomorphisms commute with complex conjugation, they act on $ \mf{u}_{-\beta} = \overline{\mf{u}_\beta} $ by multiplication with $ \mp i $. Therefore, $ L_1 $ and $ \ad_{X_1} $ coincide on $ (\mf{u}_\beta \oplus \mf{u}_{-\beta}) \cap \mf{g} $ up to sign. We finish the proof that $ L_1 = \ad_{X_1} $ on $ \mf{g}_1  $ by observing that for $ Y \in \mf{g}_1, Y \neq 0 $, \autoref{lieder} implies
\begin{align*}
2g_p(\overline{[X_1,Y]}_p, \overline{L_1Y}_p) &= d\eta_1(\overline{[X_1,Y]}_p,\overline{Y}_p) \\
&= \eta_1([\overline{[X_1,Y]},\overline{Y}]_p) = - \alpha_1([[X_1,Y],Y]) \\
&= \frac{B(X_1,[[X_1,Y],Y])}{4(n+2)} = - \frac{B([X_1,Y],[X_1,Y])}{4(n+2)} > 0 \, .
\end{align*}
Again, we could have repeated the arguments for a maximal torus of e.g.~the form $ \mf{s} \oplus \langle X_2 \rangle $. Even though the root spaces would have looked differently then, the subalgebra $ \mf{g}_1 $ would have still been the same because it can be defined independently of the maximal torus as the $ B $-orthogonal complement of $ \mf{k} $ in $ \mf{m} $ by virtue of \autoref{decomp}.
\end{proof}

\begin{remark}
In later sections, instead of working with the simply connected, almost effectively acting Lie group $ G $ with Lie algebra $ \mf{g} $, we may sometimes turn to a non-simply connected (possibly effectively acting) group $ \widetilde{G} $ with Lie algebra $ \mf{g} $. For $ \mf{g} = \mf{so}(k) $, using $ \widetilde{G} = SO(k) $ instead of $ G = Spin(k) $ allows us to describe the corresponding coset space more explicitly via matrices. If we consider a description $ \widetilde{G}/\widetilde{H} $, then the isotropy group of the $ G $-action on $ \widetilde{G}/\widetilde{H} $ is given by the connected subgroup $ H \subset G $ whose Lie algebra coincides with that of $ \widetilde{H} $. This follows from the fact that $ M $ is simply connected via the long exact sequence of homotopy groups. Hence, $ \widetilde{G}/\widetilde{H} $ and $ G/H $ are governed by the same Lie algebraic data and are therefore isomorphic homogeneous 3-Sasakian manifolds.
\end{remark}

\section{Why the Short Root of $ \mf{g}_2 $ Cannot Occur} \label{G2}

We need to fill the final gap left in the proof of \autoref{conv} in the previous section:

\begin{proposition} \label{g2}
Even in the case of a homogeneous 3-Sasakian manifold with automorphism algebra $\mf{g}_2 $, the root described in \Cref{univ} is maximal.
\end{proposition}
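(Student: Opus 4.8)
Since every root of $G_2$ is either long or short and \autoref{pm2comp} gives $\mf u^{(\pm2)}=\mf u_{\pm\alpha}$ irrespective of maximality, by \autoref{maxroot} it suffices to exclude the case that $\alpha$ is a short root. Assuming it is, a direct computation of Cartan integers in the $G_2$ root system shows that the grading acquires the extra pieces $\mf u^{(\pm3)}$, with $\dim\mf u^{(\pm3)}=\dim\mf u^{(\pm1)}=2$. Thus the complexified horizontal space $\mf g_1^\C=\mf u^{(1)}\oplus\mf u^{(-1)}\oplus\mf u^{(3)}\oplus\mf u^{(-3)}$ carries $\ad_{H_\alpha}$-eigenvalues $\pm1,\pm3$, each of multiplicity two; equivalently, as a module over $\mf k^\C=\mf s_\alpha\cong\mf{sl}(2,\C)$ it would be the sum of two copies of the $4$-dimensional irreducible module (highest weight $3$). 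The plan is to show this is incompatible with the $3$-Sasakian structure.

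On $\mf g_1$ I compare two $\mf{sp}(1)$-actions: the adjoint one $\rho_i:=\ad_{X_i}|_{\mf g_1}$ and the one by the horizontal almost complex structures $L_i:=\varphi_i|_{\mf g_1}$, which satisfy $L_i^2=-\id$ and $L_iL_j=\varepsilon_{ijk}L_k$ by \autoref{3Sdef}. Both are skew-symmetric — for $\rho_i$ this is \autoref{adtinv} — and both obey $\mf{sp}(1)$-relations with the same structure constants. The crucial input is the tensorial $3$-Sasakian identity $\mathcal L_{\xi_i}\varphi_j=2\varepsilon_{ijk}\varphi_k$, so in particular $\mathcal L_{\xi_i}\varphi_i=0$. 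Because the $\xi_i$ and $\varphi_j$ are $G$-invariant, hence parallel for the canonical connection of $M$ whose torsion is $-[\cdot,\cdot]_{\mf m}$, this identity translates at $p$ into $[\rho_i,L_j]=2\varepsilon_{ijk}L_k$ on $\mf g_1$; I double-check the constant $2$ by evaluating the same bracket on $\mf k$ from $\varphi_i\xi_j=\xi_k$ and $[X_i,X_j]=2\varepsilon_{ijk}X_k$. It follows that $Q_i:=\rho_i-L_i$ commute with all $L_j$ and satisfy $[Q_i,Q_j]=2\varepsilon_{ijk}Q_k$, so $\mf g_1$ becomes a module over two commuting copies of $\mf{sl}(2,\C)$ — the standard $L$-action and the $Q$-action — with $\rho=L+Q$ the diagonal. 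Writing $\mf g_1^\C\cong\C^2\otimes U$, where $\C^2$ is the standard $L$-module and $U$ carries the $Q$-action, the $\rho$-module is $\C^2\otimes U$ with $\mf{sl}(2,\C)$ acting diagonally.

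A Clebsch–Gordan count then finishes the argument. Decomposing $U=\bigoplus_q n_qU_q$ into $Q$-irreducibles $U_q$ of highest weight $q$, the diagonal $\rho$-module is $\bigoplus_q n_q(U_{q+1}\oplus U_{q-1})$, with the convention $U_{-1}=0$. For this to equal two copies of $U_3$ and nothing else, the multiplicity of $U_1$, namely $n_0+n_2$, must vanish, forcing $n_2=0$; but then the multiplicity of $U_3$, namely $n_2+n_4$, equals $n_4$, so $n_4=2$, which produces a nonzero $U_5$-component, i.e.\ an $\ad_{H_\alpha}$-eigenvalue $\pm5$ absent from $\mf g_1$. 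This contradiction shows $\mf g_1$ cannot be two copies of the highest-weight-$3$ module, so $\mf u^{(\pm3)}=0$ and $\alpha$ is maximal. The main obstacle is the translation of $\mathcal L_{\xi_i}\varphi_j=2\varepsilon_{ijk}\varphi_k$ into the pointwise relation $[\rho_i,L_j]=2\varepsilon_{ijk}L_k$ and the pinning of its constant, since the entire argument rests on $\rho=L+Q$ being the diagonal of two genuinely commuting copies of $\mf{sp}(1)$; once that is secured, the representation-theoretic count is immediate.
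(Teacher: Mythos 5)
Your proof is correct, and it takes a genuinely different route from the paper's. The paper stays inside $G_2$ root combinatorics: assuming $\alpha$ is short, it labels concrete roots $\beta,\gamma,\delta$, shows the induced decomposition of $T_pM$ into the subspaces $V_\beta$ is $g_p$-orthogonal (\autoref{gorthog}), characterizes orthogonality of $\varphi_i$-images through the Killing form (\autoref{gBorthog}), proves the shift property $\varphi_2V_\beta\subset V_{\beta+\alpha}\oplus V_{\beta-\alpha}$ (\autoref{phi2}), and then contradicts $\varphi_2^2=-\id$ by exhibiting, via the explicit choice $[X_\gamma,X_\delta^*]=X_\alpha$, a vector in $V_\gamma$ whose $\varphi_2$-image has a nonzero $V_\delta$-component. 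You instead argue structurally: from $[\rho_i,L_j]=2\varepsilon_{ijk}L_k$ you split the adjoint action as $\rho=L+Q$ with $Q$ an $\mathfrak{sp}(1)$-action commuting with the quaternionic structure $L$, identify $\mathfrak{g}_1^{\C}\cong\C^2\otimes U$ with $\mathfrak{k}^{\C}$ acting diagonally, and let Clebsch--Gordan exclude the module $2U_3$ that a short root would force; I checked the weight count ($\pm1,\pm3$, each of multiplicity two), the relations $[Q_i,L_j]=0$ and $[Q_i,Q_j]=2\varepsilon_{ijk}Q_k$, and the multiplicity argument, and they are all right. The one step you should write out in full is the "crucial input": the identity $\mathcal{L}_{\xi_i}\varphi_j=2\varepsilon_{ijk}\varphi_k$ is not among those listed in \autoref{3Sdef}, so it deserves its short derivation (from $(\nabla_X\varphi_j)Y=g(X,Y)\xi_j-\eta_j(Y)X$ and the quaternionic relations), and the translation to the algebraic bracket should be displayed: since $\xi_i$ and $\varphi_j$ are $G$-invariant, they are parallel for the canonical connection, whence $(\mathcal{L}_{\xi_i}\varphi_j)_p Y=[X_i,L_jY]_{\mathfrak{m}}-L_j[X_i,Y]_{\mathfrak{m}}=[\rho_i,L_j]Y$ for $Y\in\mathfrak{g}_1$, using that both $\ad_{X_i}$ and $L_j$ preserve $\mathfrak{g}_1$. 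As for the trade-off: the paper's proof is self-contained, reusing only Lemmata \ref{lieder} and \ref{adtinv} and nothing beyond the identities it has already stated; yours needs the extra (standard) structure identity and the canonical-connection formalism, but in exchange it uses no $G_2$-specific data beyond the target module and proves something more conceptual along the way --- for every homogeneous 3-Sasakian manifold the complexified horizontal module must have the form $\C^2\otimes U$ with diagonal $\mathfrak{k}^{\C}$-action, i.e.\ condition 2 of \autoref{3Sd} is recovered directly from the geometry, and the short-root module is excluded in one stroke.
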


For the sake of contradiction, let us assume that $ \alpha $ was one of the short roots of $ \mf{g}_2 $. Again, we consider the reductive complement $ \mf{m} := \mf{h}^{\perp_B} $ as well as the maps $ \psi: \mf{m}\to T_pM $, $ X \mapsto \overline{X}_p $ and $ L_i := \psi^{-1} \circ (\varphi_i)_p \circ \psi: \mf{m} \to \mf{m} $. Using the same arguments as in the proof of \autoref{decomp}, we obtain the $ B $-orthogonal decomposition
\[ \mf{m} = \langle X_1,X_2,X_3 \rangle \oplus \bigoplus_{\substack{\beta \in \Phi \, , \\ c_{\alpha\beta} \in \{1,3\}}} (\mf{u}_\beta \oplus \mf{u}_{-\beta}) \cap \mf{g} \, . \]
Under the isomorphism $ \psi: \mf{m} \to T_pM $, this induces a decomposition \mbox{of the tangent space:}
\[ T_pM = \langle \xi_1, \xi_2,\xi_3 \rangle \oplus \bigoplus_{\substack{\beta \in \Phi \, , \\ c_{\alpha\beta} \in \{1,3\}}} V_\beta \, , \]
where $ V_\beta := \psi((\mf{u}_\beta \oplus \mf{u}_{-\beta}) \cap \mf{g}) $.

\begin{lemma} \label{gorthog}
The above decomposition of $ T_pM $ is $ g_p $-orthogonal.
\end{lemma}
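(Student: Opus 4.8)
The plan is to transport everything to the reductive complement $\mf{m}$ via the isomorphism $\psi: \mf{m} \to T_pM$ and to exploit that the resulting inner product $\langle\,\cdot\,,\cdot\,\rangle := \psi^* g_p$ is $\ad(\mf{t})$-invariant. Indeed, \autoref{adtinv} shows that $\ad_{X_1}$ is skew-symmetric with respect to $\langle\,\cdot\,,\cdot\,\rangle$, while $\Ad(H)$-invariance of the metric gives the same for every $Z \in \mf{h}$, and in particular for the maximal Abelian subalgebra $\mf{s} \subset \mf{h}$. Since $\mf{t} = \mf{s} \oplus \langle X_1 \rangle$, I conclude that $\ad_Z$ is skew-symmetric for all $Z \in \mf{t}$; here one notes that $\ad_\mf{t}$ genuinely preserves $\mf{m}$, since $\mf{m}$ is both $\Ad(H)$-invariant and $\ad_{X_1}$-invariant (as $X_1 \in \mf{k}$ and $\mf{m}^\C = \mf{s}_\alpha \oplus \mf{g}_1^\C$ is $\mf{s}_\alpha$-stable). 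Complexifying, $\ad_Z^\C$ is skew-symmetric with respect to the $\C$-bilinear extension $\langle\,\cdot\,,\cdot\,\rangle^\C$ on $\mf{m}^\C$ for every $Z \in \mf{c}$.

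Next I would invoke the standard eigenspace argument for a skew-symmetric bilinear form: if $v \in \mf{u}_\gamma$ and $w \in \mf{u}_\delta$, then $\ad_Z^\C v = \gamma(Z)\,v$ and $\ad_Z^\C w = \delta(Z)\,w$, so skew-symmetry forces $(\gamma(Z) + \delta(Z))\,\langle v,w\rangle^\C = 0$ for all $Z \in \mf{c}$. Whenever $\gamma + \delta \neq 0$ one may pick $Z \in \mf{c}$ with $(\gamma + \delta)(Z) \neq 0$, which yields $\langle v,w\rangle^\C = 0$; likewise $\langle H_\alpha \rangle \perp \mf{u}_\gamma$ for every root $\gamma$, since $H_\alpha$ lies in the kernel of every $\ad_Z^\C$. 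Hence the only root spaces that can fail to be $\langle\,\cdot\,,\cdot\,\rangle^\C$-orthogonal are the conjugate pairs $\mf{u}_\gamma, \mf{u}_{-\gamma}$.

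To finish, I would identify the complexifications of the summands. By \autoref{reeb} the space $\langle \xi_1,\xi_2,\xi_3 \rangle$ corresponds under $\psi$ to $\mf{k} = \langle X_1,X_2,X_3 \rangle$, whose complexification is $\mf{s}_\alpha = \langle H_\alpha \rangle \oplus \mf{u}_\alpha \oplus \mf{u}_{-\alpha}$ by \autoref{rootprop}, while $V_\beta^\C = \mf{u}_\beta \oplus \mf{u}_{-\beta}$. The summands are indexed by the disjoint sign-pairs $\{\pm\alpha\}$ and $\{\pm\beta\}$ (one per root $\beta$ with $c_{\alpha\beta} \in \{1,3\}$), so any two root spaces drawn from two \emph{distinct} summands carry roots $\gamma, \delta$ with $\gamma \neq \pm\delta$, whence $\gamma + \delta \neq 0$ and they are orthogonal by the previous step. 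Therefore the complexified decomposition of $\mf{m}^\C$ is $\langle\,\cdot\,,\cdot\,\rangle^\C$-orthogonal, and restricting to the real forms gives the claimed $g_p$-orthogonality of the decomposition of $T_pM$.

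The crux — and the reason the Reeb direction alone does not suffice — is that $\ad_{X_1}$ acts on $\mf{u}_\gamma$ by the scalar $i\,c_{\alpha\gamma}$, so by itself it can only separate root spaces with $c_{\alpha\gamma} + c_{\alpha\delta} \neq 0$. In $\mf{g}_2$ with $\alpha$ short there are two distinct summands containing roots $\gamma, \delta$ with $c_{\alpha\gamma} = 1$, $c_{\alpha\delta} = -1$ yet $\gamma + \delta \neq 0$; such pairs are invisible to $\ad_{X_1}$ and are separated only by the extra Cartan directions coming from $\mf{s} \subset \mf{h}$. Thus the essential point is to use the full $\ad(\mf{t})$-invariance rather than merely the invariance under $\ad_{X_1}$; once that is in hand, the remainder is the routine skew-form eigenspace computation above.
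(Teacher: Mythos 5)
Your proof is correct and takes essentially the same route as the paper: complexify the $\ad(\mf{t})$-invariance supplied by \autoref{adtinv} (together with the $\Ad(H)$-invariance) and separate root spaces by their weights under $\mf{c}$, using that distinct summands carry disjoint sign-pairs of roots. The only cosmetic difference is that the paper handles orthogonality of the $V_\beta$ to the Reeb span by the direct computation $g_p((\xi_i)_p,\overline{Y}_p)=\eta_i(\overline{Y}_p)=\alpha_i(Y)\propto B(X_i,Y)=0$, whereas you obtain it from the same weight argument by viewing $\mf{k}^\C=\mf{s}_\alpha=\langle H_\alpha\rangle\oplus\mf{u}_\alpha\oplus\mf{u}_{-\alpha}$ as a sum of weight spaces.
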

\begin{proof}
If $ Y  \in (\mf{u}_\beta \oplus \mf{u}_{-\beta}) \cap \mf{g} $, then
\[ g_p((\xi_i)_p, \overline{Y}_p) = \eta_i(\overline{Y}_p)  = \alpha_i(Y) = B(X_i,Y)/B(X_i,X_i) = 0 \, , \]
Hence, each $ V_\beta $ is $ g_p $-orthogonal to $ \langle \xi_1,\xi_2,\xi_3\rangle $. If $ \beta_1, \beta_2 $ are roots such that $ \beta_1 \neq -\beta_2 $, then there exists some $ X \in \mf{c} $ such that $ \beta_1(X) \neq -\beta_2(X) $. We extend $ \psi $ and $ g_p $ complex (bi-)linearly, let $ Y \in \mf{u}_{\beta_1} $, $ Z \in \mf{u}_{\beta_2} $ and complexify \autoref{adtinv} to obtain
\[ \beta_1(X) g_p(\psi Y, \psi Z) = g_p(\psi[X,Y], \psi Z) = -g_p(\psi Y, \psi[X,Z]) = -\beta_2(X) g_p(\psi Y, \psi Z) \, . \]
Since $ \beta_1(X) \neq -\beta_2(X) $, it follows that $ \psi \mf{u}_{\beta_1} $ and $ \psi \mf{u}_{\beta_2} $ are $ g_p $-orthogonal. This implies that for $ \beta \neq \pm \gamma $, the subspaces $ V_\beta $ and $ V_\gamma $ are $ g_p $-orthogonal.
\end{proof}

\begin{lemma} \label{gBorthog}
For all $ Y, Z \in \mf{g} $, we have
\[ g_p(\overline{Y}_p, \overline{L_iZ}_p) = 0 \iff B(X_i,[Y,Z]) = 0 \, . \]
\end{lemma}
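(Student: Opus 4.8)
The plan is to collapse the left-hand side, through the standard Sasaki identities, into a fixed nonzero multiple of $B(X_i,[Y,Z])$, after which the equivalence is immediate. First I would unwind the definitions: since $\psi(X)=\overline{X}_p$ and $L_i=\psi^{-1}\circ(\varphi_i)_p\circ\psi$, the vector $\overline{L_iZ}_p$ is nothing but $(\varphi_i)_p\overline{Z}_p$. The Sasaki identity $d\eta_i(\cdot,\cdot)=2g(\cdot,\varphi_i\cdot)$ from \autoref{Sdef}, followed by \autoref{lieder}, then yields
\[ g_p(\overline{Y}_p,\overline{L_iZ}_p)=g_p\big(\overline{Y}_p,(\varphi_i)_p\overline{Z}_p\big)=\tfrac{1}{2}\,d\eta_i(\overline{Y}_p,\overline{Z}_p)=\tfrac{1}{2}\,\eta_i\big([\overline{Y},\overline{Z}]_p\big), \]
so the whole problem is reduced to evaluating $\eta_i$ on a single fundamental vector field.

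Next I would push the Lie bracket through the fundamental-field correspondence. The map $X\mapsto\overline{X}$ intertwines the two brackets up to an overall sign, so $[\overline{Y},\overline{Z}]_p=\pm\,\overline{[Y,Z]}_p$, and this sign is harmless for an equivalence. Recalling that $\eta_i(\overline{W}_p)=(\theta^*\eta_i)(W)=\alpha_i(W)$ and that $\alpha_i=B(X_i,\cdot)/B(X_i,X_i)$, I arrive at
\[ g_p(\overline{Y}_p,\overline{L_iZ}_p)=\pm\,\frac{1}{2\,B(X_i,X_i)}\,B(X_i,[Y,Z]). \]

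Finally, the prefactor is nonzero: $B$ is negative definite and $X_i\neq 0$, so $B(X_i,X_i)<0$ (note that I deliberately invoke only negative-definiteness here, not the explicit value $-4(n+2)$, since the latter was computed under the maximality assumption and need not hold in the $\mf{g}_2$ short-root scenario in which this lemma is applied). Hence the displayed quantity vanishes exactly when $B(X_i,[Y,Z])=0$, which is the claim. I expect no genuine obstacle; the only points requiring care are the bracket sign (irrelevant for an iff) and the fact that the statement is asserted for all $Y,Z\in\mf{g}$ rather than only on $\mf{m}$, so that $\overline{L_iZ}_p$ must be read as $(\varphi_i)_p\overline{Z}_p$. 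This extension is automatic, because when $Y$ or $Z$ lies in $\mf{h}$ both sides vanish: $\overline{Y}_p=0$ (resp.\ $\overline{Z}_p=0$) kills the left-hand side, while $[X_i,\mf{h}]=0$ (as $\mf{h}\subset C_\mf{g}(X_i)$) together with $\Ad$-invariance of $B$ forces $B(X_i,[Y,Z])=0$.
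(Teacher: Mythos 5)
Your proposal is correct and takes essentially the same route as the paper's proof: both collapse $g_p(\overline{Y}_p,\overline{L_iZ}_p)$ via the Sasaki identity $d\eta_i(\cdot,\cdot)=2g(\cdot,\varphi_i\cdot)$ and \autoref{lieder} into $\pm\tfrac{1}{2}\alpha_i([Y,Z])=\pm B(X_i,[Y,Z])/\bigl(2B(X_i,X_i)\bigr)$, from which the equivalence is immediate. Your extra care about the bracket sign, about invoking only negative-definiteness of $B$ (rather than the value $-4(n+2)$, which indeed relies on maximality), and about the interpretation of $\overline{L_iZ}_p$ for arguments in $\mf{h}$ are all sound refinements of the same argument.
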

\begin{proof} By virtue of \autoref{lieder},
\[ 2g_p(\overline{Y}_p,\overline{L_iZ}_p) = d\eta_i(\overline{Y}_p, \overline{Z}_p) = \eta_i([\overline{Y}, \overline{Z}]_p) = - \alpha_i([Y,Z]) = - \frac{B(X_i,[Y,Z])}{B(X_i,X_i)} \, .\qedhere \]
\end{proof}

\begin{lemma} \label{phi2}
For any root $ \beta \in \Phi $, we have $ \varphi_2 V_\beta \subset V_{\beta+\alpha} \oplus V_{\beta-\alpha} $.
\end{lemma}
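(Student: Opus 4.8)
The plan is to turn the geometric statement into a purely root-theoretic bracket computation by combining the two structural lemmas just proved. By \autoref{gorthog}, the decomposition $T_pM = \langle \xi_1,\xi_2,\xi_3\rangle \oplus \bigoplus_\gamma V_\gamma$ is $g_p$-orthogonal, so it suffices to show that $\varphi_2 \overline{Y}_p = \overline{L_2 Y}_p$ (for $Y$ real with $\overline{Y}_p \in V_\beta$) is $g_p$-orthogonal to $\langle \xi_1,\xi_2,\xi_3\rangle$ and to every summand $V_\gamma$ with $\gamma \notin \{\pm(\beta+\alpha),\pm(\beta-\alpha)\}$. The crucial device is \autoref{gBorthog}: applied with the first slot carrying $Z$, it gives $g_p(\overline{Z}_p,\varphi_2\overline{Y}_p)=0 \iff B(X_2,[Z,Y])=0$, so the whole lemma reduces to locating the roots $\gamma$ for which the bracket condition can fail.

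The key observation is that $X_2^\C = i(X_\alpha + Y_\alpha) \in \mf{u}_\alpha \oplus \mf{u}_{-\alpha}$, by the definitions of the $\mf{sl}_2$-triple. Since $B^\C(\mf{u}_\delta,\mf{u}_{\delta'})=0$ unless $\delta+\delta'=0$, the quantity $B(X_2,[Z,Y])$ only sees the $\mf{u}_\alpha \oplus \mf{u}_{-\alpha}$ part of $[Z,Y]$; hence $B(X_2,[Z,Y]) \neq 0$ forces $[Z,Y]$ to have a nonzero component of weight $\pm\alpha$. For $Z$ real spanning a summand $V_\gamma$ we have $Z^\C \in \mf{u}_\gamma \oplus \mf{u}_{-\gamma}$ and $Y^\C \in \mf{u}_\beta \oplus \mf{u}_{-\beta}$, so the weights occurring in $[Z,Y]$ are $\pm\gamma\pm\beta$. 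Requiring one of these to equal $\pm\alpha$ yields exactly $\gamma \in \{\pm(\beta+\alpha),\pm(\beta-\alpha)\}$, i.e.\ $V_\gamma \in \{V_{\beta+\alpha},V_{\beta-\alpha}\}$ (using $V_\gamma = V_{-\gamma}$, and reading $V_{\beta\pm\alpha}=0$ when $\beta\pm\alpha$ is not a root). This already disposes of all the generic summands.

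The care-points, which I expect to be the only real obstacle, are the two degenerate cases where the weight bookkeeping could spuriously produce a $\pm\alpha$ contribution: orthogonality to the $\langle\xi_i\rangle$ line and self-orthogonality to $V_\beta$. For $\langle\xi_i\rangle$ one uses $(\xi_i)_p = \overline{X_i}_p$ with $X_i^\C \in \mf{s}_\alpha = \mf{u}_\alpha\oplus\mf{u}_{-\alpha}\oplus\langle H_\alpha\rangle$; then $[X_i,Y]$ carries weights $\pm\beta$ (from the $H_\alpha$ part) and $\pm\alpha\pm\beta$, and demanding a weight $\pm\alpha$ forces $\beta=\pm\alpha$ or $\beta=\pm 2\alpha$, both impossible since $c_{\alpha\beta}\neq\pm2$ and $2\alpha$ is never a root, so $B(X_2,[X_i,Y])=0$. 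For the case $\gamma=\beta$ the bracket $[Z,Y]$ has weights $0$ and $\pm2\beta$; the $\mf{u}_0=\mf{c}$ component pairs trivially with $X_2^\C\in\mf{u}_{\pm\alpha}$ and $2\beta$ is not a root, so again $B(X_2,[Z,Y])=0$. Combining the three computations, $\varphi_2\overline{Y}_p$ is $g_p$-orthogonal to every summand other than $V_{\beta+\alpha}$ and $V_{\beta-\alpha}$, which by \autoref{gorthog} gives $\varphi_2 V_\beta \subset V_{\beta+\alpha}\oplus V_{\beta-\alpha}$.
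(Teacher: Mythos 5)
Your proof is correct and takes essentially the same route as the paper's: both reduce the claim via \autoref{gorthog} and \autoref{gBorthog} to the observation that, since $X_2 = i(X_\alpha + Y_\alpha) \in \mf{u}_\alpha \oplus \mf{u}_{-\alpha}$ and root spaces pair trivially under $B^\C$ unless their weights sum to zero, $B(X_2,[Z,Y])$ can be nonzero only if some weight $\pm\gamma\pm\beta$ of $[Z,Y]$ equals $\pm\alpha$, i.e.\ only if $\gamma \in \{\pm(\beta+\alpha),\pm(\beta-\alpha)\}$. The only difference is bookkeeping: you check orthogonality to $\langle\xi_1,\xi_2,\xi_3\rangle$ and to $V_\beta$ itself as separate cases (thereby making explicit a point the paper leaves implicit, namely that $\varphi_2$ preserves the horizontal space), whereas the paper's single condition $\gamma \neq \sigma\alpha+\tau\beta$ for all $\sigma,\tau\in\{\pm 1\}$ disposes of $\gamma=\beta$ together with all the other generic summands.
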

\begin{proof}
Let $ \gamma \in \Phi $ such that $ \gamma \neq \sigma \alpha + \tau \beta $ for all $ \sigma, \tau \in \{\pm 1\} $. Then, $ \sigma\beta+\tau\gamma \not\in \{\pm \alpha\} $ for all $ \sigma, \tau \in \{\pm1\} $. Consequently, the subspace $ [\mf{u}_\beta \oplus \mf{u}_{-\beta}, \mf{u}_\gamma \oplus \mf{u}_{-\gamma}] $ is $ B^\C $-orthogonal to $ \mf{u}_\alpha  \oplus \mf{u}_{-\alpha} \ni X_\alpha, Y_\alpha $ and thus also to $ X_2  = i(X_\alpha +Y_\alpha) $ (see the equations above \autoref{rootprop}). The previous lemma now implies that $ \varphi_2 V_\beta $ is $ g_p $-orthogonal to $ V_\gamma $. The claim then follows from \autoref{gorthog}.
\end{proof}

\begin{proof}[Proof of \autoref{g2}]
Let us label some of the roots of $ \mf{g}_2 $ according to the following diagram:

\begin{figure}[H]
\begin{center}
\begin{tikzpicture}[scale=2.2]
\draw (0:1cm)--(0,0)--(180:1cm);
\draw (90:0.577cm)--(0,0)-- (270:0.577cm);
\draw (30:0.577cm)--(0,0)--(210:0.577cm);
\draw (60:1cm)--(0,0)--(240:1cm);
\draw (120:1cm)--(0,0)--(300:1cm);
\draw (150:0.577cm)--(0,0)--(330:0.577cm);

\fill  (0,0) circle (1pt);
\fill  (0:1cm) circle (1pt);
\fill  (60:1cm) circle (1pt);
\fill  (120:1cm) circle (1pt);
\fill  (180:1cm) circle (1pt);
\fill  (240:1cm) circle (1pt);
\fill  (300:1cm) circle (1pt);
\fill  (30:0.577cm) circle (1pt);
\fill  (90:0.577cm) circle (1pt);
\fill  (150:0.577cm) circle (1pt);
\fill  (210:0.577cm) circle (1pt);
\fill  (270:0.577cm) circle (1pt);
\fill  (330:0.577cm) circle (1pt);

\node at (82:0.7cm) {\large $\alpha$};
\node at (53:1.1cm) {\large $\beta$};
\node at (25:0.7cm) {\large $\gamma$};
\node at (335:0.7cm) {\large $\delta$};
\end{tikzpicture}
\end{center}
\vfill
\end{figure}

\autoref{phi2} implies that $ \varphi_2V_\beta \subset V_\gamma $. Since $ \varphi_2 $ is injective on the horizontal space, we in fact have $ \varphi_2V_\beta = V_\gamma $. Another application of \autoref{phi2} yields $ \varphi_2 V_\gamma \subset V_\beta \oplus V_\delta $. If we can show that there exists some $ Y \in V_\gamma $ such that $ \varphi_2Y $ has a non-trivial $ V_\delta $-component, then we arrive at a contradiction to the fact that $ \varphi_2^2 = -\id $ on $ V_\beta $. \\
Let $ Z^* $ denote the complex conjugate of a vector $ Z \in \mf{u} $. We can choose $ X_\gamma \in \mf{u}_\gamma $, $ X_\delta \in \mf{u}_\delta $ in such a way that
\[ [X_\gamma, X_\delta^*] = X_\alpha =  \frac{1}{2i} (X_2 - iX_3) \, . \]
We note that
\[ X_\alpha^* = - \frac{1}{2i}(X_2+iX_3) = -Y_\alpha \, , \quad i(X_\alpha - X_\alpha^*) = X_2 \, , \]
and
\[ B([X_\gamma + X_\gamma^*, i(X_\delta^* - X_\delta)],X_2) = B(i(X_\alpha - X_\alpha^*), X_2) = B(X_2,X_2) \neq 0 \, . \]
\autoref{gBorthog} finally implies that $ \varphi_2Y $ has a $ V_\delta $-component for $ Y := (\overline{X_\gamma + X_\gamma^*})_p $.
\end{proof}

\section{Why no Proper Subgroup of $ \Aut_0(M) $ Acts Transitively}\label{AutM}

We have shown that any simply connected homogeneous $3$-Sasakian manifold is obtained from a complex $3$-Sasakian datum as explained in \Cref{const}. It remains to be shown that there is a unique such description. Therefore, let us now assume that $ \widetilde{G} $ is a connected Lie group with Lie algebra $ \mf{g} $ that acts effectively and transitively on $M$, so that $\widetilde{G}\subset \mathrm{Aut}_0(M)$. We show that then $\widetilde{G}=\mathrm{Aut}_0(M)$. In other words:

\begin{proposition}\label{GAut}
No proper subgroup of $\mathrm{Aut}_0(M)$ can act transitively on a simply connected homogeneous 3-Sasakian manifold $M$.
\end{proposition}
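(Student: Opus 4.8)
The plan is to descend to Lie algebras and prove that $\mf g$ already exhausts $\mf a := \mf{aut}(M)$. Since $\widetilde G \subset \Aut_0(M)$ is connected and acts by $3$-Sasakian automorphisms, passing to fundamental vector fields gives an inclusion $\mf g \subseteq \mf a$, and because both $\widetilde G$ and $\Aut_0(M)$ are connected it suffices to show $\mf g = \mf a$: a connected subgroup carrying the full Lie algebra of a connected group is the whole group. As $M$ is compact by \autoref{Einstein}, $\Isom(M)$ and its closed subgroup $\Aut_0(M)$ are compact by Myers--Steenrod, so $\mf a$ is the compact, hence reductive, Lie algebra of $\Aut_0(M)$.

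First I would show that $\mf a$ is simple. The group $\Aut_0(M)$ itself acts effectively, transitively and by $3$-Sasakian automorphisms, and it is compact; these are exactly the hypotheses used throughout \Cref{univ} and \Cref{G2}. None of those arguments (triviality of the center via \autoref{lieder}, the splitting into simple ideals in \autoref{gusimple}, and the exclusion of the short root of $\mf g_2$) used anything about the acting group beyond compactness, effectivity, transitivity and the automorphism property. Running them verbatim with $\Aut_0(M)$ in place of $G$ shows that $\mf a$ has trivial center and is simple, and produces the same Reeb data: the integer $n$ with $\dim M = 4n+3$ is unchanged, and the analogue of \autoref{reeb} yields Reeb generators $X_1',X_2',X_3' \in \mf a$ with $\overline{X_i'}_p = (\xi_i)_p$ and the normalization $\alpha_i = -B_{\mf a}(X_i',\cdot)/4(n+2)$, where $B_{\mf a}$ is the negative definite Killing form of $\mf a$.

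Next I would compare the two simple algebras $\mf g \subseteq \mf a$. Let $\mf p := \mf g^{\perp_{B_{\mf a}}}$, so $\mf a = \mf g \oplus \mf p$; since $B_{\mf a}$ is $\ad$-invariant and $\mf g$ is a subalgebra, $\mf p$ is an $\ad(\mf g)$-submodule. Because $\mf g$ already acts transitively, $\mf a = \mf g + \mf a_p$ for the isotropy subalgebra $\mf a_p$ at $p$, and $\mf g \cap \mf a_p = \mf h$. The decisive claim is that $\mf p \subseteq \mf a_p$, i.e.\ that the reductive complement $\mf a_p^{\perp_{B_{\mf a}}}$ of the big algebra already lies inside $\mf g$; equivalently, that $\Aut_0(M)$ has no infinitesimal isotropy beyond $\mf h$. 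Granting this, the proof finishes quickly: for $Y \in \mf p$ and any $X \in \mf g$ the bracket $[X,Y]$ lies in $\mf p \subseteq \mf a_p$, so $\overline{[X,Y]}_p = 0$; since $\overline Y_p = 0$ and fundamental fields of Killing fields satisfy $[\overline X,\overline Y]_p = \nabla_{\overline X_p}\overline Y$, this equals, up to sign, the infinitesimal isotropy rotation $A_Y := (\nabla \overline Y)_p$ applied to $\overline X_p$, whence $A_Y(\overline X_p) = 0$. As $\mf g$ is transitive, the vectors $\overline X_p$ exhaust $T_pM$, so $A_Y = 0$; faithfulness of the isotropy representation of the isometry group then forces $Y = 0$, and therefore $\mf p = 0$ and $\mf g = \mf a$.

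The main obstacle is thus the claim $\mf p \subseteq \mf a_p$. I expect to reach it by comparing the contact forms through the two Killing forms: restricting $\alpha_i = -B_{\mf a}(X_i',\cdot)/4(n+2)$ to $\mf g$ and matching it with the identity $\alpha_i = -B(X_i,\cdot)/4(n+2)$ from \Cref{univ} gives $B_{\mf a}(X_i',X) = B(X_i,X)$ for all $X \in \mf g$. The subtlety is that in general $X_i' \neq X_i$ and $B_{\mf a}|_{\mf g} \neq B$ (these would agree only if $\mf g$ were an ideal), so the comparison must be carried out intrinsically through the Reeb vector fields rather than by identifying the two constructions naively. Concretely, I would first argue that the Reeb generators $X_i'$ of $\mf a$ already lie in $\mf g$ modulo $\mf h$, which makes the vertical components $\eta_i(\overline Y_p) = \alpha_i(Y) = -B_{\mf a}((X_i')_{\mf p},Y)/4(n+2)$ vanish for $Y \in \mf p$; the horizontal components are then controlled by the $\ad(\mf k)$-module structure of $\mf p$ and the $\mf{sl}_2$-grading defined by $H_\alpha$, together with the positivity estimate $-B([X_1,Y],[X_1,Y])/4(n+2) > 0$ from the end of \Cref{univ}, which is incompatible with any nonzero horizontal contribution from $\mf p$.
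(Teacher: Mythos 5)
Your reduction to the Lie algebra statement $\mf{g}=\mf{a}:=\mf{aut}(M)$ is sound, and so is your endgame: if $\mf{p}:=\mf{g}^{\perp_{B_{\mf{a}}}}$ were contained in the isotropy algebra $\mf{a}_p$, then for $Y\in\mf{p}$ every bracket $[X,Y]$ with $X\in\mf{g}$ would again lie in $\mf{p}\subseteq\mf{a}_p$, transitivity of $\mf{g}$ would force $(\nabla\overline{Y})_p=0$, and a Killing field vanishing to first order at a point vanishes identically. The genuine gap is the pivotal claim $\mf{p}\subseteq\mf{a}_p$ itself, for which you offer a hope rather than a proof --- and note that, given your (easy) endgame, this claim is logically \emph{equivalent} to $\mf{p}=0$, i.e.\ to the full proposition, so nothing has been reduced. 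Both steps of your sketch for it are unsupported. First, you want the Reeb generators to satisfy $(X_i')_{\mf{p}}=0$. What the structure theory of \Cref{univ} actually yields is only $\overline{X_i'}_p=(\xi_i)_p=\overline{X_i}_p$, i.e.\ $X_i'\equiv X_i \pmod{\mf{a}_p}$; moreover, by simplicity of $\mf{g}$ one has $B_{\mf{a}}|_{\mf{g}\times\mf{g}}=c\,B$ for some $c>0$, and matching the two normalizations of $\alpha_i$ gives $(X_i')_{\mf{g}}=X_i/c$. Combining these, $(X_i')_{\mf{p}}=0$ would force $c=1$ and $X_i'=X_i$ exactly --- that is, the Killing form of $\mf{a}$ would have to restrict to the Killing form of $\mf{g}$ on the nose. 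For a proper embedding of compact simple algebras this generically fails (e.g.\ $B_{\mf{su}(3)}|_{\mf{su}(2)}=\tfrac32 B_{\mf{su}(2)}$), so proving $c=1$ is tantamount to proving $\mf{g}=\mf{a}$, and you provide no tool for it. Second, the assertion that the horizontal components of $\overline{Y}_p$, $Y\in\mf{p}$, are ``controlled'' by the $\ad(\mf{k})$-structure and the positivity estimate is not an argument: that estimate at the end of \Cref{univ} concerns $Y\in\mf{g}_1\subset\mf{g}$ and the endomorphism $L_1$, and no mechanism is given that applies it to elements of $\mf{p}$. (A smaller point: to ``run \Cref{univ} verbatim'' for $\Aut_0(M)$ you should pass to its universal cover, which acts almost effectively; this is harmless but worth saying.)

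What your scheme is missing is precisely the rigidity input the paper manufactures. The paper computes the Nomizu operator of $G/H$ and shows that every $Y\in\mf{m}=\mf{k}\oplus\mf{g}_1$, viewed as a Killing field, satisfies the explicit identity \eqref{nablaY}, whose right-hand side involves only the $3$-Sasakian tensors $\eta_i,\varphi_i,\xi_i$ and not the acting group. Since a Killing field is determined by its value and covariant derivative at $p$, and \eqref{nablaY} expresses the latter through the former, evaluation at $p$ identifies the set of Killing fields satisfying \eqref{nablaY} with $T_pM$; hence this set equals $\mf{m}$, and then $\mf{g}=[\mf{m},\mf{m}]+\mf{m}$ (an ideal, hence all of $\mf{g}$ by Proposition \ref{gusimple}) is intrinsically determined by $M$, independently of which transitive group one starts from. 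Applying this simultaneously to $\widetilde{G}$ and to $\Aut_0(M)$ gives $\mf{g}=\mf{a}$ at once. To salvage your orthogonal-complement approach you would need some comparable pointwise characterization of $\mf{g}$ (or of $\mf{m}$) inside $\mf{isom}(M)$; the Killing-form bookkeeping you propose does not supply it.
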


This will conclude the proof of \autoref{goal}.\\
We consider a simply-connected homogeneous $3$-Sasakian manifold $M=\widetilde{G}/\widetilde{H}$, with notation as in \autoref{goal}: In particular, we have a reductive decomposition $\mf{g} = \mf{h} \oplus \mf{m}$, where $\mf{m} = \mf{h}^\perp_B$, and $\mf{m} = \mf{k} \oplus \mf{g}_1$, where $\mf{k}$ corresponds to the Reeb vector fields $\xi_i$ and $\mf{g}_1= (\mf{h}\oplus \mf{k})^\perp_B$.  Note that we have the commutator relations (cf.~\autoref{rootconst})
\begin{align}\label{comm}\tag{$ \ast $}
[\mf{h},\mf{h}]\subset \mf{h},\, [\mf{h},\mf{k}]=0,\, [\mf{h},\mf{g}_1]\subset \mf{g}_1,\, [\mf{k},\mf{k}]\subset \mf{k},\, [\mf{k},\mf{g}_1]\subset \mf{g}_1,[\mf{g}_1,\mf{g}_1]\subset\mf{g}_0= \mf{h}\oplus \mf{k}.
\end{align}

Let $\nabla$ be the Levi-Civita connection on $M$, and $\alpha:\mf{m}\times \mf{m}\to \mf{m}$ the associated Nomizu operator defined by
\[
\alpha(X,Y) = \nabla_{X_{eH}} Y - [X,Y]_{eH}.
\]
It satisfies
\begin{align*}
\alpha(X,Y) = \begin{cases} 0 & X\in \mf{k}\text{ and } Y\in \mf{g}_1 \\ \frac12 [X,Y]_{\mf{m}} & X,Y\in \mf{k} \text{ or } X,Y\in \mf{g}_1 \\
[X,Y]_{\mf{m}} & X\in \mf{g}_1 \text{ and } Y\in \mf{k}, \end{cases}
\end{align*}
see \cite[Theorem 4.2]{DOP}. Thus, by definition of the Nomizu operator, we have
\[
\nabla_{X_{eH}}Y = L_Y(X),
\]
where $L_Y:\mf{m}\to \mf{m}\cong T_{eH}M$ is defined by
\begin{align*}
L_Y(X) = \begin{cases} \frac32 [X,Y]=-3\sum\eta_i(Y_{eH})\varphi_i X_{eH} & X\in \mf{k} \\  2[X,Y]=-2\sum\eta_i(Y_{eH})\varphi_iX_{eH} & X\in \mf{g}_1 \end{cases}
\end{align*}
for $Y\in \mf{k}$ and
\begin{align*}
L_Y(X) = \begin{cases} [X_i,Y] = \varphi_i(Y_{eH}) & X=X_i\in \mf{k} \\ \frac32 [X,Y]_{\mf{k}} = -\frac32 \sum_{i=1}^3 d\eta_i(X_{eH},Y_{eH})\xi_i& X\in \mf{g}_1 \end{cases}
\end{align*}
for $Y\in \mf{g}_1$, where we used Lemma \ref{lieder} in the last equation. Hence every vector from $\mf{m}=\mf{k}\oplus\mf{g}_1$ satisfies
\begin{align}\label{nablaY}
\nabla_{X_{eH}}Y&=-3\sum_{i,j=1}^3\eta_i(X_{eH})\eta_j(Y_{eH})\xi_k-2\sum_{j=1}^3\eta_j(Y_{eH})\varphi_j(X_{eH})_\mathcal{H}\tag{$\#$}\\
&\quad +\sum_{i=1}^3\eta_i(X_{eH})\varphi_i(Y_{eH})_\mathcal{H}-\frac 32 \sum_{i=1}^3d\eta_i((X_{eH})_{\mathcal{H}},(Y_{eH})_\mathcal{H})\xi_i\notag
\end{align}
for all $X\in T_{eH}M$ and where $X_\mathcal{H}$ denotes the projection to $\mathcal{H}=\bigcap \ker \eta_i$.

Consider the maps
\begin{align}\label{masKilling} \notag
\mf{m}\to \left\{Y\text{ Killing field on }M\mid Y \text{ satisfies \eqref{nablaY} for all } X\in T_{eH}M \right\}\to T_{eH}M,
\end{align}
where the first map is inclusion and the second is evaluation at $eH$. The evaluation map is injective, as for Killing fields $Y_1,Y_2$ in the middle space with $(Y_1)_{eH} = (Y_2)_{eH}$, also $(\nabla Y_1)_{eH} = (\nabla Y_2)_{eH}$, which implies that $Y_1 = Y_2$. Since $\mf{m}\cong T_{eH}M$ both maps are isomorphisms. In particular, we have shown that \begin{align*}
\mf{m}=\{Y\text{ Killing field on }M\mid Y \text{ satisfies \eqref{nablaY} for all } X\in T_{eH}M \}\subset\mf{isom}(M).
\end{align*}
Consider the subspace $[\mf{m},\mf{m}]+ \mf{m}\subset \mf{g}$. Using \eqref{comm} we find that this is an ideal in $\mf{g}$ and thus (Proposition \ref{gusimple}) already $\mf{g}$ itself. Hence, the knowledge of $\mf{m}$ as a subset of the Lie algebra of Killing vector fields on $M$ alone determines $\mf{g}$.

\begin{proof}[Proof of \autoref{GAut}]
By the above argument, every connected Lie group $ \widetilde{G} $ with Lie algebra $ \mf{g} $ acting effectively and transitively on $ M $ has the same Lie algebra, namely $ \mf{g} = \mf{aut}(M) $. The corresponding connected subgroup of $ \Aut(M) $ is therefore $ \widetilde{G} = \Aut_0(M) $.
\end{proof}

\section{Determining the Isotropy}\label{Isotropy}

Having proven \autoref{goal}, we now derive the precise list given in \autoref{3Slist}. By Theorems \ref{constthm} and \ref{conv} any simply-connected homogeneous $3$-Sasakian manifold can be written in the form $G/H$, where $G$ is a simply-connected simple Lie group and $H = (C_G(K))_0$, where $K\subset G$ is the connected subgroup with Lie algebra $\mf{k}\cong \mf{sp}(1)$ determined by a maximal root. In this section, we will determine the isotropy groups $H$, thereby proving Corollary \ref{3Slist} in the simply connected case. The classical cases are dealt with in the following
\begin{proposition}
For $ G = Sp(n+1), SU(m) $ and $ \widetilde{G} = SO(k) $, the isotropy groups are given by $ H = Sp(n), S(U(m-2) \times U(1)) $ and $ \widetilde{H} = SO(k-4) \times Sp(1) $, respectively.
\end{proposition}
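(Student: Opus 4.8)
The plan is to exploit the description $H=(C_G(K))_0$ furnished by \autoref{constthm}, which turns the problem into a concrete computation of the centralizer of the $Sp(1)$-subgroup $K$ (the one determined by a maximal root) inside each classical group, followed by extraction of the identity component. In each case I would first write down the $\mf{sl}_2$-triple $(X_\alpha,Y_\alpha,H_\alpha)$ of the maximal root explicitly in the standard matrix model, pass to the compact real form $\mf{k}\cong\mf{sp}(1)$, and read off how the resulting group $K$ acts on the defining representation ($\mathbb{H}^{n+1}$, $\C^m$, or $\R^k$). The centralizer $C_G(K)$ is then the intersection of $G$ with the commutant of this $K$-action, which I would determine by Schur's lemma over the appropriate division algebra.

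Concretely, for $G=Sp(n+1)$ the maximal root of $C_{n+1}$ is the long root $2e_1$, whose $\mf{sl}_2$ acts on a single quaternionic line, so $K=Sp(1)$ acts by multiplication on the first $\mathbb{H}$-coordinate and trivially on $\mathbb{H}^n$; the quaternionic commutant forces a real scalar on the first line and is unconstrained on the rest, giving $C_G(K)=\{\pm 1\}\times Sp(n)$ and hence $H=Sp(n)$. For $G=SU(m)$ the maximal root of $A_{m-1}$ is $e_1-e_m$, whose $\mf{sl}_2$ is the standard block in the coordinates $e_1,e_m$, so $K\cong SU(2)$ acts irreducibly on $\langle e_1,e_m\rangle$ and trivially on the remaining $m-2$ coordinates; complex Schur yields scalars $\lambda I_2$ on the plane and $U(m-2)$ on the complement, and intersecting with $SU(m)$ produces $\{\diag(B,\lambda I_2):B\in U(m-2),\,\lambda\in U(1),\,\lambda^2\det B=1\}=S(U(m-2)\times U(1))$. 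For $\widetilde{G}=SO(k)$ the maximal root of $B_\ell$ or $D_\ell$ is $e_1+e_2$, whose compact $\mf{sl}_2$ is one of the two $Sp(1)$-factors of $SO(4)=(Sp(1)\times Sp(1))/\Z_2$ acting on the first four real coordinates; identifying $\R^4\cong\mathbb{H}$ with $K$ acting by left multiplication, the real (quaternionic-type) commutant is right multiplication, so $C_{\widetilde{G}}(K)=Sp(1)\times SO(k-4)$.

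The final step in each case is to pass to the identity component, and this is where the only genuine subtlety lies. For $Sp(n+1)$ the centralizer has two components and one must discard the $\{\pm1\}$-factor to obtain $H=Sp(n)$; it is precisely this $\Z_2$ that later distinguishes $S^{4n+3}$ from $\R P^{4n+3}$. For $SU(m)$ one must verify that $\{\lambda^2\det B=1\}$ is already connected — this holds because a loop along which $\det B$ winds once around $U(1)$ forces $\lambda$ to change sign, so the two naive sheets of the double cover of $U(m-2)$ are joined — whereas for $SO(k)$ the product $Sp(1)\times SO(k-4)$ is connected outright. The main obstacle throughout is not the Schur-lemma step itself but the clean identification of the abstract maximal-root $\mf{sl}_2$ with a concrete matrix subalgebra and its action on the standard module; once this dictionary is fixed, the three centralizers and their identity components follow directly, completing \autoref{3Slist} in the classical simply connected cases.
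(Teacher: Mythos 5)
Your proposal is correct and follows essentially the same route as the paper: both identify $\mf{k}$ as an explicit block subalgebra via a convenient choice of maximal root, compute the matrix centralizer $C_G(K)$ in each classical group, and pass to the identity component (the paper placing the $\mf{sl}_2$-block in the last coordinates via \textsc{Tapp}'s conventions, you in the first). The only difference is that you spell out what the paper leaves implicit, namely the Schur-lemma justification of the commutants and the connectedness of the $SU(m)$-centralizer; these details are correct and consistent with the paper's asserted answers.
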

\begin{proof}
We use the explicit description of the root systems of the compact groups provided in \cite[Chapter 11]{Tapp}. \\
$ G = Sp(n+1) $: We may choose the maximal root $ \alpha $ such that \mbox{$ \mf{k} = \{\diag(0_n,\mf{sp}(1) ) \} $} (by letting $ \alpha = \gamma_{n+1} $ in \textsc{Tapp}'s notation). Accordingly,
\begin{align*}
K &= \{\diag(I_n,Sp(1)) \} \, , \\
C_G(K) &= \{\diag(Sp(n),\pm 1)\} \, , \\
H &= (C_G(K))_0 = \{\diag(Sp(n),1) \} \, .
\end{align*}
$ G = SU(m) $: We may choose $ \alpha $ such that $ \mf{k} = \{\diag(0_{m-2}, \mf{su}(2))\} $ (by letting $ \alpha = \alpha_{m-1,m} $ in \textsc{Tapp}'s notation). Accordingly,
\begin{align*}
K &= \{\diag(I_{m-2},SU(2)) \} \, , \\
C_G(K) &= H = \{\diag(SU(m-2),zI_2) \; | \; z \in U(1)\} \cap SU(m) \, .
\end{align*}
$ \widetilde{G} = SO(k) $: We recall that there are two embeddings $ Sp(1)^+, \, Sp(1)^- \subset SO(4) $, depending on whether $ Sp(1) $ is viewed as acting on $ \mathbb{H} \cong \R^4 $ by multiplication from the left or right, respectively. We may choose $ \alpha $ such that $ \mf{k} = \{\diag(0_{k-4}, \mf{sp}^-(1)) \} $ (by letting $ \alpha = \alpha_{[k/2]-1,[k/2]} $ in \textsc{Tapp}'s notation). Accordingly,
\begin{align*}
\widetilde{K} &= \{\diag(I_{k-4},Sp^-(1)) \} \, , \\
C_{\widetilde{G}}(\widetilde{K}) &= \widetilde{H} = \{\diag(SO(k-4),Sp^+(1))\} \, .
\end{align*}
\end{proof}

We now present a different method based on Borel-de Siebenthal theory, which allows us to first understand the isotopy algebra $ \mf{h} $ in the exceptional cases: \\
Using the same notation as before, we let $\mf{s}\subset \mf{h}$ be a maximal Abelian subalgebra and consider the maximal Abelian subalgebra $\mf{t}:=\mf{s} \oplus \langle X_1\rangle$ of $\mf{g}$. Let $\alpha$ denote the maximal root that vanishes on $\mf{s}^\C$. We fix a set of positive roots of $\mf{g}$ using a slight perturbation of the hyperplane perpendicular to the maximal root $\alpha$. By intersecting this hyperplane with $\mf{s}$, we also obtain a notion of positive root for $\mf{h}$. By the very definition of root spaces, as $\mf{h}$ commutes with $X_1$, any root of $\mf{h}$ becomes, by extending it by $0$ on $X_1$, a root of $\mf{g}$.

\begin{proposition} The simple $\mf{h}$-roots are precisely those simple $\mf{g}$-roots perpendicular to $\alpha$.
\end{proposition}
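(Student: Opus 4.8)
The plan is to reduce the statement to a purely combinatorial fact about the root system $\Phi$ of $\mf{u}$ and then read it off from the sign behaviour of the Cartan number $c_{\alpha\beta}$. First I would make the correspondence between roots of $\mf{h}$ and roots of $\mf{g}$ precise. Since $\mf{h}$ commutes with $X_1 = iH_\alpha$, every root of $\mf{h}$ with respect to $\mf{s}^\C$ extends by zero on $\langle H_\alpha\rangle$ to a linear form on $\mf{c} = \mf{s}^\C \oplus \langle H_\alpha\rangle$, and \autoref{0comp} identifies the root spaces inside $\mf{v} = \mf{h}^\C$ with those $\mf{u}_\beta$ for which $c_{\alpha\beta} = \beta(H_\alpha) = 0$, i.e. $\beta \perp \alpha$. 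As each such $\beta$ already vanishes on $H_\alpha$, it is determined by its restriction to $\mf{s}^\C$, so $\beta \mapsto \beta|_{\mf{s}^\C}$ is an addition-preserving bijection between $\Phi_0 := \Phi \cap \alpha^\perp$ and the roots of $\mf{h}$. Because the positivity on $\mf{h}$ is obtained by intersecting the very same perturbed hyperplane with $\mf{s}$, this bijection carries $\Phi_0^+ := \Phi_0 \cap \Phi^+$ onto the positive roots of $\mf{h}$. The assertion thus becomes the root-theoretic claim that the simple roots of $\Phi_0$ (with respect to $\Phi_0^+$) are exactly $\Delta \cap \alpha^\perp$, where $\Delta$ is the chosen simple system of $\Phi$.

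The fact I would isolate first is that every positive root satisfies $c_{\alpha\beta} \geq 0$. This is built into the choice of $\Phi^+$: as the positive system arises from a small perturbation of the hyperplane $\alpha^\perp$, a root with $\langle\beta,\alpha\rangle > 0$ remains positive, one with $\langle\beta,\alpha\rangle < 0$ remains negative, and the roots lying in $\alpha^\perp$ are assigned a sign by the perturbation; in every case $\beta \in \Phi^+$ forces $c_{\alpha\beta} \geq 0$.

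With this in hand the two inclusions are short. For $\Delta \cap \alpha^\perp \subseteq \{\text{simple roots of }\Phi_0\}$, a root that is simple in $\Phi$ is not a sum of two elements of $\Phi^+$, hence a fortiori not of two elements of $\Phi_0^+$, so it stays simple in $\Phi_0$. Conversely, let $\gamma$ be simple in $\Phi_0$ and suppose it were decomposable in $\Phi$ as $\gamma = \beta_1 + \beta_2$ with $\beta_1, \beta_2 \in \Phi^+$; then $c_{\alpha\beta_1} + c_{\alpha\beta_2} = c_{\alpha\gamma} = 0$ together with $c_{\alpha\beta_i} \geq 0$ forces $c_{\alpha\beta_1} = c_{\alpha\beta_2} = 0$, so $\beta_1, \beta_2 \in \Phi_0^+$, contradicting simplicity of $\gamma$ in $\Phi_0$. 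Hence $\gamma \in \Delta$, and $\gamma \perp \alpha$ yields $\gamma \in \Delta \cap \alpha^\perp$.

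I do not anticipate a genuine obstacle here; the one point requiring care is the bookkeeping that makes the positive systems of $\mf{g}$ and $\mf{h}$ literally compatible, so that ``sum of positive roots'' means the same thing on both sides, and that the bijection above respects positivity. Once the sign statement $c_{\alpha\beta} \geq 0$ for $\beta \in \Phi^+$ is in place, the decomposition argument closes at once; maximality of $\alpha$ (via \autoref{maxroot}) is needed only in the background to guarantee that $\Phi_0$ really is the root system of $\mf{h}$.
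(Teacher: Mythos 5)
Your proof is correct and takes essentially the same approach as the paper: the same perturbed-hyperplane positivity yielding $c_{\alpha\beta}\geq 0$ for all positive roots, and the same argument that a decomposition $\gamma=\beta_1+\beta_2$ of a root $\gamma\perp\alpha$ into positive roots forces $c_{\alpha\beta_1}=c_{\alpha\beta_2}=0$. You merely make explicit the positivity bookkeeping and the easy inclusion (a $\mf{g}$-simple root perpendicular to $\alpha$ stays $\mf{h}$-simple) that the paper leaves implicit.
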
 
\begin{proof} By our notions of positivity, any $\mf{h}$-simple root is also $\mf{g}$-simple: If an $\mf{h}$-root is the sum of two positive $\mf{g}$-roots, both of them have to lie in the hyperplane perpendicular to $\alpha$. Conversely, recall that by \autoref{rootconst} the roots of $\mf{h}$ are exactly those roots $\beta$ perpendicular to the maximal root $\alpha$.
\end{proof}

We can thus determine the isotropy type of $H$ by deleting the nodes in the Dynkin diagram of $G$ corresponding to simple roots that are not perpendicular to $\alpha$. For each simple $G$, these were determined by \textsc{Borel} and \textsc{de Siebenthal} in \cite{BdS}: In the table on p.\ 219 they draw the Dynkin diagrams for every simple $\mf{g}$, extended by the lowest root (denoted $P$ in their notation). In order to find the isomorphism type of $H$ one therefore only needs to erase this lowest root, as well as all roots connected to it. As an example, consider the Dynkin diagram of $E_6$: \\

\begin{figure}[H]
\begin{center}
\begin{tikzpicture}[scale=1.5]

\fill  (0,0) circle (2pt);
\fill  (-1,0) circle (2pt);
\fill  (-2,0) circle (2pt);
\fill  (1,0) circle (2pt);

\fill  (2,0) circle (2pt);
\fill  (0,-1) circle (2pt);
\fill  (0,-2) circle (2pt);

\draw (-2,0)--(2,0);
\draw (0,0)--(0,-2);
\node at (0.3,-2) {$\alpha$};

\draw (-.2,-1.2)-- (.2,-.8);
\draw (-.2,-.8)-- (.2,-1.2);

\draw (-.2,-2.2)-- (.2,-1.8);
\draw (-.2,-1.8)-- (.2,-2.2);

\end{tikzpicture}
\end{center}
\vfill
\end{figure}
Deleting $\alpha$ as well as the unique simple root connected to it results in the Dynkin diagram of $SU(6)$: The homogeneous $3$-Sasakian manifold corresponding to $E_6$ is $E_6/SU(6)$. 
\begin{remark} 
If one removes only the nodes in the Dynkin diagram of $G$ that are connected to $\alpha$, and not $\alpha$ itself, one obtains the Dynkin diagram of the normalizer $N_G(K)$, which then yield the Wolf spaces $G/N_G(K)$. Note that by the list in \cite{BdS}, in all cases except $G=SU(n)$ the maximal root $\alpha$ is connected to only one other node, which means that in these cases the groups $H$ and $N_G(K)$ are semisimple, whereas in the case $G=SU(n)$ the groups $H$ and $N_G(K)$ have a one-dimensional center. Furthermore, in the cases except $SU(n)$, the normalizer $N_G(K)$ is a maximal subgroup of maximal rank: the types of such groups are exactly those that were classified by \textsc{Borel} and \textsc{de Siebenthal} in \cite{BdS}: Given a simple compact Lie group $G$, one adds the lowest root to the Dynkin diagram and removes one other simple root from it. 
\end{remark}

Going through the list in \cite{BdS}, one obtains the Lie algebras of the isotropy groups of the homogeneous spaces occurring in \autoref{3Slist}. As we determined the isotropy groups in the classical cases above, in order to finish the proof of this corollary in the simply connected case, we only need to argue that in the exceptional cases the isotropy groups are simply connected. \textsc{Ishitoya} and \textsc{Toda} showed in \cite[Corollary 2.2]{IT} that in the cases $G=G_2, F_4, E_6, E_7, E_8$, we have $\pi_2(G/N_G(K)) = \Z_2$, which is, because $G$ is simply connected, equivalent to $\pi_1(N_G(K))=\Z_2$. (See also \cite[Remarque II, p.\ 220]{BdS} for how to compute the fundamental group of a maximal subgroup of $G$ of maximal rank.) Moreover, by \cite[Theorem  2.1]{IT} the normalizer $N_G(K)$ is of the form $N_G(K) = (H\times Sp(1))/\Z_2$, which then implies that $H$ is simply connected.

\section{Why $ \R P^{4n+3} $ is the Only Non-Simply Connected Homogeneous 3-Sasakian Manifold} \label{fund}

Having treated the simply connected case of Corollary \ref{3Slist}, our goal is now to prove the following

\begin{theorem} \label{fundthm}
The only homogeneous 3-Sasakian manifolds which are not simply connected are the real projective spaces $ \R P^{4n+3} $.
\end{theorem}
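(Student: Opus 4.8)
I want to show that the only non-simply connected homogeneous 3-Sasakian manifolds are the real projective spaces $\R P^{4n+3}$. By the earlier classification the simply connected spaces are known, so a general homogeneous 3-Sasakian manifold $M$ arises as a quotient of one of the listed simply connected spaces $\widetilde{M}$ by a finite group of deck transformations. The plan is therefore: first argue that any homogeneous 3-Sasakian $M$ has a simply connected homogeneous 3-Sasakian universal cover $\widetilde M$, then analyze which subgroups of $\pi_1(M) \subset \mathrm{Aut}(\widetilde M)$ can arise and act freely, compatibly with the 3-Sasakian structure.

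**The approach.**

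\begin{proof}[Proof plan]
Let $M$ be a homogeneous 3-Sasakian manifold. By \autoref{Einstein}, $M$ is complete, compact and has finite fundamental group, so its universal cover $\pi: \widetilde M \to M$ is a compact simply connected manifold. The 3-Sasakian tensors $(g,\xi_i,\eta_i,\varphi_i)$ pull back to a 3-Sasakian structure on $\widetilde M$, and the deck transformation group $\Gamma = \pi_1(M)$ acts freely on $\widetilde M$ by 3-Sasakian automorphisms, with $M = \widetilde M/\Gamma$. Since the identity component $\Aut_0(\widetilde M)$ acts transitively (it covers the transitive action on $M$), $\widetilde M$ is itself homogeneous, hence one of the simply connected spaces classified in \autoref{goal}. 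The task is thus to determine, for each such $\widetilde M = G/H$ with $G$ simply connected and $H = (C_G(K))_0$, which nontrivial finite subgroups $\Gamma \subset \Aut(\widetilde M)$ act freely and yield another homogeneous quotient.

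The key observation is that any deck transformation commutes with the full automorphism group action and in particular must preserve the Reeb vector fields $\xi_i$ pointwise up to the $\Sp(1)$-action rotating them; more precisely, since a 3-Sasakian automorphism fixes each $\xi_i$, and $\Gamma$ acts by such automorphisms, $\Gamma$ lies in the centralizer of the transitive $G$-action. For $\widetilde M = G/H$ this centralizer is controlled by $N_G(H)/H$ together with the center $Z(G)$: an element of $\Gamma$ corresponds to a coset automorphism $gH \mapsto n g H$ for $n$ in the normalizer, and freeness forces $n$ to act without fixed points on $G/H$. I would compute, case by case but uniformly via the root-theoretic description, the group of 3-Sasakian deck transformations. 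The crucial quantity is $\pi_1(M)$, which by the long exact sequence equals the group of such deck transformations; because $\widetilde M$ is simply connected and $H$ is connected, this group is a quotient of $Z(G)$ intersected with the elements acting trivially on $H$ but nontrivially on the coset space.

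The heart of the matter is then to show that this freely-acting group is trivial in every case except $\widetilde M = S^{4n+3} = \Sp(n+1)/\Sp(n)$, where it is exactly $\Z_2$. For the sphere case one checks directly that the antipodal-type $\Z_2$ generated by $\diag(I_n, -1) \in Z(\Sp(n+1))$ or the relevant central element acts freely on $S^{4n+3}$ preserving all three Reeb fields, producing $\R P^{4n+3} = \Sp(n+1)/(\Sp(n)\times \Z_2)$, matching \autoref{3Slist}. For all other spaces in the list, I would verify that no nontrivial central element of $G$ acts freely while preserving the 3-Sasakian structure: the obstruction is that any candidate deck transformation either has fixed points on $G/H$ or fails to commute with the Reeb fields $\xi_i$ (equivalently, it would have to act trivially on the $\Sp(1)$-factor $\mf{k}$ but nontrivially elsewhere, which the centralizer structure forbids).

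\textbf{The main obstacle.} The hardest part will be the uniform case analysis showing that the group of freely-acting 3-Sasakian deck transformations is trivial for every listed space other than the sphere. The subtlety is that one must simultaneously ensure (i) the transformation commutes with the transitive action and preserves each $\xi_i$ — a strong rigidity coming from the 3-Sasakian automorphism condition — and (ii) it acts \emph{freely}. The natural tool is to identify $\pi_1(M)$ with a subgroup of $Z(G)$ modulo the part contained in $H$, and then to use the explicit computation of the isotropy groups from \Cref{Isotropy} together with the known centers of the simple groups $G$; the reason $S^{4n+3}$ is special is precisely that $Z(\Sp(n+1)) = \Z_2$ meets the coset space freely, whereas in the remaining cases the relevant central element already lies in (the identity component covering) $H$ or has fixed points, so no proper quotient remains homogeneous and 3-Sasakian.
\end{proof}
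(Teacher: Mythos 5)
Your overall skeleton is the same as the paper's: pass to the universal cover $\widetilde{M}=G/H$ (via \autoref{Einstein} and the lifting of the structure), observe that the deck group $\Gamma=\pi_1(M)$ acts by 3-Sasakian automorphisms commuting with the transitive $G$-action, and then classify the possible $\Gamma$. Up to that point your argument is sound. The genuine gap is at the decisive step, where you assert that $\Gamma$ is ``a quotient of $Z(G)$ intersected with the elements acting trivially on $H$.'' This is unjustified, and it is not how the deck group is actually controlled. The centralizer of the left $G$-action on $G/H$ is $N_G(H)/H$ acting by \emph{right} translations $gH\mapsto gnH$ (your formula $gH\mapsto ngH$ is not $G$-equivariant unless $n$ is central), and in general this group is much larger than the image of $Z(G)$ in it. So a case check over central elements of $G$, however carefully done, rules out only \emph{some} candidate deck transformations: nothing in your plan excludes a deck transformation given by right translation by some $n\in N_G(H)\cap C_G(K)$ with $n\notin Z(G)H$. (Two further symptoms of this confusion: $\diag(I_n,-1)$ is \emph{not} in $Z(Sp(n+1))=\{\pm I_{n+1}\}$ -- it generates the center of the Reeb subgroup $K$, and only happens to induce the same map on $Sp(n+1)/Sp(n)$ as $-I_{n+1}$; and your ``freeness'' dichotomy is a red herring, since right translation by $n\in N_G(H)$ satisfies $gnH=gH$ if and only if $n\in H$, so these actions are automatically free once nontrivial, while elements of $Z(G)$ automatically preserve the 3-Sasakian structure because they lie in $G$.)

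What is missing is precisely the paper's key lemma, which replaces $Z(G)$ by the center $Z(K)$ of the Reeb $Sp(1)$: the admissible enlargements $\overline{H}\supset H$ satisfy $\overline{H}\subset N_G(H)\cap C_G(K)$, and this group is generated by $H\cup Z(K)$. The proof uses that centralizers of tori in compact connected Lie groups are connected: since $C_{\mf{g}}(X_1)=\mf{h}\oplus\langle X_1\rangle$, the centralizer of the circle $S_1$ generated by $X_1$ is the connected group generated by $H\cup S_1$; hence any $g\in C_G(K)\subset C_G(S_1)$ factors as $g=hg_1$ with $h\in H$ and $g_1\in C_G(K)\cap K=Z(K)$. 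This bounds the deck group by $Z(K)/(Z(K)\cap H)$, which is at most $\Z_2$, and reduces the whole theorem to the single question of whether $Z(K)\subset H$ -- answered case by case in \autoref{prop:quotienttrivial} using the isotropy computations of \Cref{Isotropy} (and Ishitoya--Toda in the exceptional cases), with $Z(K)\not\subset H$ occurring only for $G=Sp(n+1)$, yielding $\R P^{4n+3}$. Without this lemma (or an equivalent argument pinning down $N_G(H)\cap C_G(K)$ modulo $H$), your case analysis does not close the proof.
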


Let $ M = G/\overline{H} $ be a homogeneous 3-Sasakian manifold (not necessarily simply connected), where $ G $ is a simply connected compact Lie group and $ \overline{H} $ is possibly disconnected. The universal cover of $ G/\overline{H} $ is given by $ G/H $, where $ H $ denotes the identity component of $ \overline{H} $, and the homogeneous 3-Sasakian structure lifts to the simply connected space $ G/H $. As shown in \Cref{univ}, the automorphism group $ G $ has to be simple. The vectors $ X_i \in \mf{g} $ from \Cref{univ} span a subalgebra $ \mf{k} := \langle X_1,X_2,X_3\rangle \cong \mf{sp}(1) $ and we let $ K \subset G $ denote the corresponding connected subgroup. \\
Since $ H $ is the identity component of $ \overline{H}$, we have $ \overline{H} \subset N_G(H) $. Furthermore, the 3-Sasakian structure descends from $ G/H $ to $ G/\overline{H} $, so $ \overline{H} \subset C_G(K) $. Conversely, any subgroup $ \overline{H} \subset N_G(H) \cap C_G(K) $ containing $ H $ allows us to define a 3-Sasakian structure on $ G/\overline{H}$. Summarizing, the non-simply connected quotients of a given simply connected homogeneous 3-Sasakian manifold $ G/H $ are classified by the subgroups of the group
\[ \big(N_G(H) \cap C_G(K)\big)/H \, . \]
Thus, it suffices to show that this quotient is $ \Z_2 $ for $ G = Sp(n+1) $ and trivial otherwise.

\begin{lemma}
The numerator $ N_G(H) \cap C_G(K) $ is the subgroup generated by $ H \cup Z(K) $.
\end{lemma}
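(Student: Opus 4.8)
The plan is to prove the slightly stronger statement that $C_G(K)$ already equals $H\cdot Z(K)$, and in particular lies inside $N_G(H)$, so that intersecting with $N_G(H)$ changes nothing. The key idea is to reduce the condition of centralizing all of the rank-one group $K$ to centralizing only a maximal torus of $K$, where I can use that the centralizer of a torus in a compact connected group is connected and is therefore pinned down by its Lie algebra.

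First I would dispose of the inclusion $\langle H\cup Z(K)\rangle\subset N_G(H)\cap C_G(K)$. We have $H\subset C_G(K)$ by the definition $H=(C_G(K))_0$, and $H\subset N_G(H)$ trivially. Since $Z(K)\subset K$ centralizes $K$, we get $Z(K)\subset C_G(K)$; and because $H$ centralizes $K$ (hence also $Z(K)$), the subgroups $H$ and $Z(K)$ commute elementwise. This yields both $Z(K)\subset C_G(H)\subset N_G(H)$ and the identification $\langle H\cup Z(K)\rangle=H\cdot Z(K)$, which is thus a subgroup of $N_G(H)\cap C_G(K)$.

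The heart of the argument is the reverse inclusion. Let $S:=\exp\langle X_1\rangle\subset K$ be the one-dimensional maximal torus generated by the Reeb element $X_1$. Since $C_{\mf{g}}(X_1)=\mf{h}\oplus\langle X_1\rangle$ (established in \Cref{univ}), the Lie algebra of $C_G(S)$ is exactly $\mf{h}\oplus\langle X_1\rangle$. Now $H\cdot S$ is a compact, connected subgroup contained in $C_G(S)$ of the full dimension $\dim\mf{h}+1$; invoking the classical fact that the centralizer of a torus in a compact connected Lie group is connected forces $C_G(S)=H\cdot S$. Granting this, take any $g\in N_G(H)\cap C_G(K)$. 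Then $g\in C_G(K)\subset C_G(S)=H\cdot S$, so $g=hs$ with $h\in H$, $s\in S$. As $h\in H\subset C_G(K)$, the factor $s=h^{-1}g$ also centralizes $K$; being an element of $S\subset K$ that centralizes $K$, it lies in $K\cap C_G(K)=Z(K)$. Hence $g=hs\in H\cdot Z(K)=\langle H\cup Z(K)\rangle$.

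The only genuine obstacle is applying the torus-centralizer connectedness correctly and verifying that $H\cdot S$ exhausts $C_G(S)$; everything else is bookkeeping with the commutator relations \eqref{comm} and the elementary observation $K\cap C_G(K)=Z(K)$. I would also flag that the hypothesis $g\in N_G(H)$ is never used in the reverse inclusion — only $g\in C_G(K)$ enters — so the lemma in fact sharpens to $C_G(K)=\langle H\cup Z(K)\rangle$, with the normalizer condition being automatic.
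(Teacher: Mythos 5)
Your proof is correct and takes essentially the same route as the paper's: both reduce centralizing $K$ to centralizing the circle $S_1=\exp\langle X_1\rangle$, use $C_\mf{g}(X_1)=\mf{h}\oplus\langle X_1\rangle$ together with connectedness of torus centralizers in compact connected groups to get $C_G(S_1)=H\cdot S_1$, and then factor $g=hs$ with $s\in K\cap C_G(K)=Z(K)$. Your closing observation that the normalizer hypothesis is never used, so that in fact $C_G(K)=H\cdot Z(K)$, is also implicit in the paper's argument, which likewise only invokes $g\in C_G(K)$ in the reverse inclusion.
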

\begin{proof}
Clearly, $ H \cup Z(K) \subset N_G(H) \cap C_G(K) $. The vector $ X_1 \in \mf{g} $ is the infinitesimal generator of a circle subgroup $ S_1 \subset K $. Since $ C_\mf{g}(X_1) = \mf{h} \oplus \langle X_1 \rangle $ and the centralizer of any torus (not necessarily maximal) in a compact connected Lie group is always connected, it follows that $ C_G(S_1) $ is the subgroup generated by $ H \cup S_1 $. Consequently, any $ g \in C_G(K) \subset C_G(S_1) $ can be represented as $ g = hg_1 $ for some $ h \in H $, $ g_1 \in S_1 $. Since $ H \subset C_G(K) $, we have $ g_1 = h^{-1}g \in C_G(K) \cap K = Z(K) $.
\end{proof}

\begin{proposition} \label{prop:quotienttrivial}
The quotient $ (N_G(H) \cap C_G(K))/H $ is $ \Z_2 $ for $ G = Sp(n+1) $ and trivial otherwise.
\end{proposition}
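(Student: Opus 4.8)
The plan is to reduce the whole statement to a single membership question. By the preceding lemma, $N_G(H)\cap C_G(K)$ is generated by $H$ together with $Z(K)$. Recall that $K\cong Sp(1)$ in every case (as the matrix models and the normaliser description in \autoref{Isotropy} show), so $Z(K)=\{e,z\}\cong\Z_2$; moreover $Z(K)$ centralises $H$ because $H\subset C_G(K)$. Hence $\langle H,Z(K)\rangle=H\,Z(K)$ is a subgroup and the second isomorphism theorem yields
\[ (N_G(H)\cap C_G(K))/H \;\cong\; Z(K)/(Z(K)\cap H). \]
Thus the quotient is $\Z_2$ exactly when $z\notin H$ and trivial exactly when $z\in H$, and the proposition becomes the assertion that $z\in H$ in every case except $G=Sp(n+1)$. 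I would organise the rest along the case list of \autoref{3Slist}.

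For the classical groups I would read the answer off the explicit models in \autoref{Isotropy}. For $Sp(n+1)$ one has $z=\diag(I_n,-1)$, manifestly absent from $H=\diag(Sp(n),1)$, so the quotient is $\Z_2$. For $SU(m)$ the element $z=\diag(I_{m-2},-I_2)$ has determinant $1$ and is of the form $\diag(SU(m-2),\zeta I_2)$ with $\zeta=-1$, so $z\in H$ and the quotient is trivial. For the exceptional groups $G_2,F_4,E_6,E_7,E_8$ I would invoke $N_G(K)=(H\times Sp(1))/\Z_2$ from \cite[Theorem 2.1]{IT}: since $K\cong Sp(1)$, the quotiented $\Z_2$ must be the diagonal $\langle(z_H,-1)\rangle$ of the two centres (otherwise the second factor would cover $K$ by $SO(3)$), and then $[e,-1]=[z_H,1]$ exhibits $z$ as the image of the central element $z_H\in H$, so again $z\in H$.

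The step I expect to be the genuine obstacle is $G=Spin(k)$, because \autoref{Isotropy} describes the non-simply connected form $SO(k)$ and $z$ must be followed through the covering $\pi\colon Spin(k)\to SO(k)$. One computes $\pi(z)=\diag(I_{k-4},-I_4)$, which already lies in $\widetilde H=SO(k-4)\times Sp^+(1)$ since $-I_4$ is right multiplication by $-1$ and hence belongs to $Sp^+(1)$. The difficulty is that $\pi(z)$ has the two preimages $z$ and $\epsilon z$ with $\epsilon=-1\in\ker\pi$, and I must decide which lies in $H$. My plan is to do this bookkeeping inside $Spin(4)=Sp(1)_L\times Sp(1)_R\subset Spin(k)$: the subgroup $K$ lifts to $Sp(1)_L$ and the right-multiplication factor $Sp^+(1)$ lifts to $Sp(1)_R\subset H$, so that $z=(-1,1)$, $z'':=(1,-1)\in H$ and $\epsilon=(-1,-1)$, whence $z=\epsilon z''$.

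It then remains only to check $\epsilon\in H$. I would argue that $\epsilon=-1$ is the canonical kernel element of $Spin(k)$ and already lies in the subgroup $Spin(k-4)\subset Spin(k)$; since $k-4\geq 3$, this is exactly the connected subgroup of $Spin(k)$ integrating the $\mf{so}(k-4)$-summand of $\mf h$, and it is an honest spin group rather than a copy of $SO(k-4)$, so it contains $\epsilon$ and lies in $H$. Therefore $z=\epsilon z''\in H$ and the quotient is trivial, which together with the classical and exceptional cases completes the proof.
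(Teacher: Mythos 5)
Your reduction (via the preceding lemma and the second isomorphism theorem) to the single question of whether the nontrivial element $z\in Z(K)$ lies in $H$ is exactly the paper's first step, and your treatment of the classical cases is correct and matches the paper: the $Sp(n+1)$ and $SU(m)$ checks are read off the matrix models of \Cref{Isotropy}, and your element chase for $Spin(k)$ (writing $z=\epsilon z''$ with $\epsilon=-1\in Spin(k-4)\subset H$ and $z''$ in the other $Sp(1)$-factor of $Spin(4)$) is the same argument the paper packages as $Spin(k-4)\times_{\Z_2}Spin(4)\cong Spin(k-4)\times_{\Z_2}\bigl(Sp(1)^+\times Sp(1)^-\bigr)$, up to a harmless mislabelling of which of $Sp(1)^{\pm}$ is left and which is right multiplication.

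The genuine gap is in the exceptional cases $G_2,F_4,E_6,E_7,E_8$. There you use only the isomorphism type $N_G(K)\cong (H\times Sp(1))/\Z_2$ and try to deduce that the $\Z_2$ is the diagonal $\langle(z_H,-1)\rangle$ from the claim $K\cong Sp(1)$. This fails twice over. First, your justification of $K\cong Sp(1)$ is circular for these groups: the paper has no matrix models there, and the normalizer description alone does not determine whether $K$ is $Sp(1)$ or $SO(3)$ --- that is governed precisely by the position of the $\Z_2$ you are trying to locate. Second, even granting $K\cong Sp(1)$, your dichotomy (``diagonal, or else the second factor covers $K$ by $SO(3)$'') omits the third possibility $\Z_2=\langle(z_H,1)\rangle\subset H\times\{1\}$, i.e.\ $N_G(K)\cong (H/\Z_2)\times Sp(1)$: in that case the $Sp(1)$-factor still embeds, so $K\cong Sp(1)$, yet the image of $Z(Sp(1))$ meets the image of $H$ trivially, so one would get $z\notin H$ and the quotient would be $\Z_2$, contradicting the proposition. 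Your argument therefore cannot exclude the one configuration that would falsify the statement. The paper closes exactly this hole by citing the stronger content of \cite[Theorem 2.1]{IT}: not merely the isomorphism type of $N_G(K)$, but the assertion that $Z(Sp(1))$ is contained in $H$ --- which is precisely the diagonality you need. To repair your proof, quote that part of the theorem (or give an independent determination of where the $\Z_2$ sits); it cannot be conjured from $K\cong Sp(1)$ alone.
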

\begin{proof}
By the previous lemma, it suffices to check if $ Z(K) $ is contained in $ H $. \\
$ G = Sp(n+1) $: We have already seen in \Cref{Isotropy} that the center $ Z(K) = \{\diag(I_n,\pm 1)\} $ is \textbf{not} contained in $ H = \{\diag(Sp(n),1)\} $. \\
$ G = SU(m) $: We have also shown that in this case $ Z(K) = \{\diag(I_{m-2},\pm I_2)\} $ is contained in $ H = S(U(m-2) \times U(1)) $. \\
$ G = Spin(k) $: We have seen that for $ \widetilde{G} = SO(k) $, the center $ Z(K) = \{\diag(I_{k-4},\pm I_4)\} $ is contained in $ \widetilde{H} = SO(k-4) \times Sp(1)^+  $. We now transfer this statement to $ G= Spin(k)$: Denote the universal covering map by $\pi:Spin(k)\to SO(k)$. First, we observe that the connected subgroup of $Spin(r)$ that maps onto a block-diagonally embedded $SO(r-1)$ is $Spin(r-1)$, for $r\geq 4$: this is because $S^{r-1} = SO(r)/SO(r-1)$ is $2$-connected for $r\geq 4$, hence equal to $Spin(r)/Spin(r-1)$ by the long exact sequence in homotopy. Thus, for $k\geq 7$, the subgroups $SO(k-4)$ and $SO(4)$ lift to $Spin(k-4)$ and $Spin(4)$, respectively. As $\pi$ is a $2:1$-covering, the group covering $SO(k-4)\times SO(4)$ is $Spin(k-4)\times_{\Z_2} Spin(4)$, where the $\Z_2$ quotient means that the nontrivial elements in the kernels of the respective projections are identified. Now, $Spin(k-4)\times_{\Z_2} Spin(4) \cong Spin(k-4)\times_{\Z_2} (Sp(1)^+\times Sp(1)^-)$. This implies that the center of $Sp(1)^-$ is contained in $H=Spin(k-4)\times Sp(1)^+$.\\
$ G = G_2, F_4, E_6, E_7, E_8 $: \textsc{Ishitoya} and \textsc{Toda} showed that the subgroup $ U $ of the corresponding qK space $ G/U $ has to be of the form $ U = (H \times Sp(1))/\Z_2 $ and that the center $ Z(Sp(1)) $ is contained in $ H $ \cite[Theorem 2.1]{IT}.
\end{proof}

\section{Deriving the Classification of Homogeneous Positive qK Manifolds} \label{wolf}

We end this article by showing that the classification of homogeneous positive qK manifolds in \Cref{Wolf}, which had originally been the stepping stone for the classification of homogeneous 3-Sasakian manifolds, can in turn be derived from our results.

Let $ B $ be a positive qK manifold. We recall that qK manifolds may be characterized by a subbundle $ Q \subset \textnormal{End} \, TB $ of the endomorphism bundle which admits a local frame satisfying the multiplication rules of the quaternions. In her 1975 article \cite{Kon}, \textsc{Konishi} showed that the $ SO(3) $-principal fibre bundle $ P \to B $ of oriented orthonormal frames of $ Q $ admits a 3-Sasakian structure. This construction is known as the \emph{Konishi bundle} over $ B $ and constitutes the inverse of the fibration introduced in \autoref{fibr}. Another natural and interesting bundle over $ B $ is the unit sphere bundle $ Z := S(Q) $ in $ Q $, known as the \emph{twistor fibration}. Its total space $ Z $ is both a complex contact manifold and a Fano variety \cite[Chapters 12, 13]{BG}. \\
A \emph{qK automorphism of $ B $} is an isometry $ \phi: B \to B $ such that conjugation with its differential $ d\phi $ leaves the bundle $ Q $ invariant. We call $ B $ a \emph{homogeneous qK manifold} if there is a Lie group $ G $ acting transitively on $ B $ by qK automorphisms. We first show the following

\begin{proposition} \label{Konishi}
The Konishi bundle over a simply connected homogeneous positive qK manifold is a homogeneous 3-Sasakian manifold.
\end{proposition}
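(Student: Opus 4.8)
The plan is to exhibit a transitive group of $3$-Sasakian automorphisms of the Konishi bundle $P$ by lifting the action on the base. Write $B=G/L$ with $G$ compact connected acting transitively by qK automorphisms and $L=G_{b_0}$ the isotropy at a base point; recall that $P\to B$ is the $SO(3)$-bundle of oriented orthonormal frames of the rank-$3$ subbundle $Q\subset\mathrm{End}\,TB$, equipped with its Konishi $3$-Sasakian structure, and that the Reeb fields $\xi_1,\xi_2,\xi_3$ are the fundamental vector fields of the principal $SO(3)$-action. Each qK automorphism $\phi$ of $B$ acts on $Q$ by $I\mapsto \dd\phi\,I\,\dd\phi^{-1}$ and hence carries oriented orthonormal frames of $Q$ to such frames, so it lifts to a bundle map $\hat\phi\colon P\to P$ covering $\phi$, and $\phi\mapsto\hat\phi$ is a homomorphism.

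First I would verify that $\hat\phi$ is a $3$-Sasakian automorphism. A short computation shows that $\hat\phi$ commutes with the principal $SO(3)$-action; an $SO(3)$-equivariant bundle map preserves every fundamental vector field, so $\hat\phi_*\xi_i=\xi_i$ for $i=1,2,3$. Since $\hat\phi$ also preserves the Konishi metric — assembled from the $\phi$-invariant base metric and the bi-invariant fibre metric — it is a $3$-Sasakian automorphism. Thus the lifted $G$-action consists of $3$-Sasakian automorphisms, and everything reduces to proving that this action is transitive on $P$.

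As $G$ is already transitive on $B$, transitivity on $P$ is equivalent to transitivity of the isotropy $L$ on a single fibre $P_{b_0}\cong SO(3)$. Now $L$ acts on $Q_{b_0}$ through a homomorphism $\rho\colon L\to SO(Q_{b_0})=SO(3)$, and since $SO(3)$ acts simply transitively on its oriented orthonormal frames, $L$ is transitive on $P_{b_0}$ if and only if $\rho$ is surjective, i.e. if and only if the infinitesimal isotropy $\rho_{*}(\mf{l})$ is all of $\mf{so}(3)$.

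This surjectivity is the main obstacle, and it is exactly where the positivity hypothesis has to be used. The plan is to enlarge $G$ to the full isometry group $\Isom_0(B)$ — which for $n\ge 2$ acts by qK automorphisms, because $Q$ is then canonically recovered from the metric as the $\mf{sp}(1)$-part of the holonomy representation, so that this enlargement only increases $\rho(L)$. For the full isometry group, a theorem of Kostant asserts that the holonomy algebra of $B$ lies inside the linear isotropy algebra $\rho_{*,T}(\mf{l})\subset\mf{sp}(n)\oplus\mf{sp}(1)$. On the other hand, positivity of the scalar curvature forces the $\mf{sp}(1)$-part of the curvature, $R^Q(X,Y)=c\sum_i\omega_i(X,Y)I_i$ with $c\neq 0$, to span all of $\mf{sp}(1)=\mf{so}(3)$ as $X,Y$ vary, because $\omega_1,\omega_2,\omega_3$ are pointwise linearly independent. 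Projecting Kostant's inclusion onto the $\mf{sp}(1)$-summand therefore yields $\rho_{*}(\mf{l})=\mf{so}(3)$, hence $\rho(L)=SO(3)$ and transitivity of $L$ on the fibre; the low-dimensional case $n=1$ (where qK means self-dual Einstein) is handled separately, the only simply connected positive examples being the symmetric spaces $S^4$ and $\C P^2$. The delicate point throughout is that mere $G$-invariance of a subbundle of $Q$ would not imply parallelism, so the argument must be routed through holonomy and the explicit curvature rather than through invariant tensors; once $\rho$ is surjective, the lifted action is transitive and $P$ is a homogeneous $3$-Sasakian manifold.
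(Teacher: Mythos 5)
Your overall frame---lift the $G$-action to $P$, check that the lifts are $3$-Sasakian automorphisms, and reduce transitivity on $P$ to surjectivity of the isotropy homomorphism $\rho\colon L\to SO(3)$---is exactly the paper's reduction (there one sets $V=\rho(U)$ and must exclude $\dim V=0$ and $\dim V=1$). The gap is in the step that is supposed to do all the work: there is no theorem of Kostant asserting that the holonomy algebra of a homogeneous Riemannian manifold is contained in the image of the linear isotropy algebra, and this inclusion is false in general, even for the full isometry group of a compact homogeneous space. A generic Berger sphere ($S^3=SU(2)$ with a squashed left-invariant metric) has full isometry group $U(2)$, so its linear isotropy image is a circle in $SO(3)$, while its holonomy is all of $SO(3)$; similarly, the $3$-Sasakian Aloff--Wallach space $SU(3)/S^1$ has low-dimensional isotropy but full holonomy $SO(7)$. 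Kostant's actual theorem points in the opposite direction: for a Killing field $X$ on a compact Riemannian manifold, the operator $\nabla X$ lies in the \emph{normalizer of the holonomy algebra}; it constrains Killing fields by holonomy, not holonomy by isotropy. The inclusion you need is a hallmark of symmetric spaces---and that homogeneous positive qK manifolds are symmetric is precisely the content of the Wolf--Alekseevskii classification, which this proposition is being used to reprove, so assuming it would be circular.

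What survives of your curvature argument is correct but lands in the wrong algebra: positivity and the identity $R^Q(X,Y)=c\sum_i\omega_i(X,Y)I_i$ with $c\neq 0$ do show, via Ambrose--Singer, that the holonomy of the induced connection on $Q$ is all of $SO(3)$. But for a $G$-invariant connection on a homogeneous bundle, Wang's holonomy theorem describes this holonomy in terms of curvature values and the Nomizu map on all of $\mf{g}$, and it gives no control whatsoever over $\rho_*(\mf{l})$ alone---as the counterexamples above confirm. Ruling out $\dim\rho(L)\in\{0,1\}$ genuinely requires global input, and this is what the paper supplies instead: if $\rho(L)$ were trivial, $P\to B$ would admit a global section, contradicting $p_1(P)\neq 0$ (this class is represented, up to a factor, by the fundamental $4$-form); if $\rho(L)$ were a circle, its fixed axis in $\R^3$ would produce a global section of the twistor fibration $Z\to B$, which is impossible on a compact positive qK manifold. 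Your fallback for $n=1$ (Hitchin's theorem that only $S^4$ and $\C P^2$ occur) is true but is a far heavier import than these two obstructions, which work uniformly in all dimensions; so even after repairing the main step, the proof would not be more elementary than the paper's.
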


Let $ B $ be a simply connected homogeneous positive qK manifold, so that we may write $ B = G/U $, where $ G $ is simply connected and $ U $ is connected. Because $ G $ acts on $ B $ by qK automorphisms, the $ G $-action lifts to the Konishi bundle $ P $. In particular, the istropy group $ U $ acts on the fiber $ F $ of $ P $ over the identity coset $ eU \in B $. \\
Choose and fix a frame $ p \in F $. This allows us to identify $ F $ with $ SO(3) $ via the orbit bijection $ \theta_p: SO(3) \to F , \, g \mapsto p \cdot g $. The $ SO(3) $-left action on itself by left multiplication now induces a left action on $ F $ (which depends on the choice of $ p $), viz.~$ g \cdot_p q := \theta_p(g \theta_p^{-1}(q)) $. Since the $ U $-action commutes with the $ SO(3) $-right action on $ F $, there exists a homomorphism $ \rho: U \to V $ onto a subgroup $ V \subset SO(3) $ (again, all depending on $ p $) such that $ u \cdot q = \rho(u) \cdot_p q $ for all $ q \in F $, namely $ \rho(u) := \theta_p^{-1}(u \cdot p) $. Clearly, $ d := \dim V \in \{0,1,3\} $.

\begin{lemma}
$ d \neq 0 $.
\end{lemma}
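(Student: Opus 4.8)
The plan is to argue by contradiction, so suppose $d=0$. Since $U$ is connected, its image $V=\rho(U)\subset SO(3)$ is a connected $0$-dimensional subgroup and hence trivial; thus $U$ fixes the frame $p$ and acts trivially on the entire fibre $F$. Consequently the orbit map descends to a well-defined $G$-equivariant global section $s\colon B=G/U\to P$, $gU\mapsto g\cdot p$. Reading off the three endomorphisms encoded by the frame $s(x)$ then yields a globally defined, oriented orthonormal frame $I_1,I_2,I_3$ of $Q$. Because $\rho$ is trivial, the $G$-action does not permute the $I_\alpha$, so each $I_\alpha$ is genuinely a $G$-invariant tensor field on $B$.

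I would next exploit the Levi-Civita covariant derivative of this frame. Writing $\nabla I_1=\theta_3\otimes I_2-\theta_2\otimes I_3$ and cyclically, the three connection one-forms $\theta_\alpha$ on $B$ are $G$-invariant, being built from the $G$-invariant frame and the $G$-invariant connection $\nabla$. Hence each $\theta_\alpha$ corresponds to an $\Ad(U)$-fixed element of the reductive complement $\mf{m}\cong T_{eU}B$, i.e.\ to an element of $\mf{m}^{U}=\{\xi\in\mf{m}\mid [\operatorname{Lie}(U),\xi]=0\}$. The crux of the argument is to show $\mf{m}^{U}=0$. A nonzero $\xi\in\mf{m}^{U}$ defines a $G$-invariant vector field on $B$ whose value at $gU$ is $(d\tau_g)\xi\neq 0$, hence a nowhere-vanishing vector field. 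But $B$ is compact (positive quaternionic Kähler manifolds are positive Einstein, cf.\ \autoref{Einstein}) and, being a positive quaternionic Kähler manifold, has vanishing odd Betti numbers, so $\chi(B)=\sum_i b_{2i}(B)\geq 1>0$; by Poincaré–Hopf it admits no nowhere-zero vector field, a contradiction. Therefore $\mf{m}^{U}=0$, which forces $\theta_1=\theta_2=\theta_3=0$.

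With all $\theta_\alpha$ vanishing we get $\nabla I_\alpha=0$, so $I_1,I_2,I_3$ is a parallel frame of $Q$ and the induced connection on $Q$ is flat, $R^{Q}=0$. This contradicts the standard curvature identity for quaternionic Kähler manifolds, by which $R^{Q}$ equals a nonzero constant multiple of the triple of fundamental two-forms whenever the scalar curvature is nonzero; since $B$ is \emph{positive}, that multiple is nonzero. This contradiction establishes $d\neq 0$. I expect the main obstacle to be precisely the step $\mf{m}^{U}=0$, that is, excluding $G$-invariant vector fields on $B$; the Euler-characteristic/Poincaré–Hopf argument above (resting on the vanishing of the odd Betti numbers of a positive quaternionic Kähler manifold) handles it, after which the flatness-versus-curvature contradiction is immediate from the quaternionic Kähler curvature identity.
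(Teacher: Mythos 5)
Your proof is correct, but it takes a genuinely different route from the paper's. Both arguments start identically: $d=0$ forces the connected group $U$ to act trivially on the fibre, so $gU\mapsto g\cdot p$ is a well-defined global section of $P$. The paper stops there, at the purely topological level: a principal bundle admitting a section is trivial, so $p_1(P)=0$, contradicting the fact that $p_1(P)$ is a nonzero multiple of the class of the fundamental $4$-form $\Omega$ of $B$ (Besse, Prop.\ 14.92), which is nontrivial because $\Omega^n$ is a volume form. You instead exploit the \emph{equivariance} of the section: the resulting frame $I_1,I_2,I_3$ of $Q$ is $G$-invariant, hence so are the connection $1$-forms $\theta_\alpha$; these correspond to $\Ad(U)$-fixed vectors in $\mf{m}$, which you kill by invoking Salamon's theorem that compact positive qK manifolds have vanishing odd Betti numbers (so $\chi(B)>0$, and Poincar\'e--Hopf forbids nowhere-zero vector fields); the resulting parallel frame makes the induced connection on $Q$ flat, equivalently $B$ is hyperk\"ahler and hence Ricci-flat, contradicting positivity of the scalar curvature via the identity $R^Q=c\,\omega$ with $c\neq 0$. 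Each step of this is sound, and in fact the two proofs rest on the same geometric fact: Besse's formula $p_1(P)\sim[\Omega]$ is precisely Chern--Weil applied to $R^Q=c\,\omega$. What the paper's packaging buys is brevity and robustness — it needs only the existence of \emph{some} section, no equivariance, no homogeneity, and a single citation; what yours costs is the extra, rather heavy input of Salamon's vanishing theorem, needed solely to dispose of the invariant $1$-forms $\theta_\alpha$, a step the topological argument bypasses entirely. (One minor slip: for compactness of $B$ you point to the paper's Einstein proposition, which concerns 3-Sasakian manifolds; the correct justification is that qK manifolds are Einstein, so a positive homogeneous one is complete with positive Ricci curvature and Myers' theorem applies.)
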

\begin{proof}
If $ d = 0 $, then the connected group $ U $ would act trivally on $ F $. Hence, we would obtain a well-defined global section $ B \to P , \, gU \mapsto g \cdot p $, meaning that the principal fiber bundle $ P $ was trivial. But the first Pontryagin class $ p_1(P) $ of $ P $ is (up to a factor) given by the class of the fundamental $ 4 $-form $ \Omega \in \Omega^4(B) $ of $ B $ and is therefore non-trivial \cite[Proposition 14.92]{Bess}.
\end{proof}

\begin{lemma}
$ d \neq 1 $.
\end{lemma}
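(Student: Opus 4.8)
The plan is to show that $d=1$ would force the positive qK manifold $B$ to carry a parallel complex structure subordinate to $Q$, which is incompatible with positivity. Suppose $d=1$, so that $V=\rho(U)$ is a circle $SO(2)\subset SO(3)$. Then $V$ fixes the two poles of the $2$-sphere $S(Q_{eU})$, and since $U$ acts on the fibre $Q_{eU}$ only through $\rho$, the isotropy group $U$ fixes a unit vector $I\in Q_{eU}$ with $I^{2}=-\id$. Extending $I$ $G$-invariantly over $B=G/U$ yields a global section $I\in\Gamma(Q)$, i.e.\ an almost complex structure on $B$ subordinate to the quaternionic structure and compatible with $g$. First I would record that, $B$ being a homogeneous qK manifold, the connection $\nabla$ induced on $Q$ (the restriction of the Levi-Civita connection to the parallel subbundle $Q\subset\mathrm{End}\,TB$) is $G$-invariant, so that $\nabla I$ is again a $G$-invariant tensor field.

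The key step is to show $\nabla I=0$ by a representation-theoretic weight count. A $G$-invariant section of a homogeneous bundle is determined by its $U$-invariant value at $eU$; moreover, since $|I|$ is constant, $\nabla I$ takes values in the orthogonal complement $I^{\perp}\subset Q$, a bundle of rank $2$. Let $U(1)\subset Sp(1)$ be the preimage of $V\subset SO(3)=Sp(1)/\{\pm1\}$. Under the holonomy reduction to $Sp(n)\cdot Sp(1)$ the isotropy module is $T_{eU}B\cong\C^{2}\otimes\C^{n}$, on which this $U(1)$ acts with weights $\pm1$, whereas on $I^{\perp}\subset Q_{eU}\cong\mathfrak{sp}(1)$ (the adjoint module) it acts with weights $\pm2$. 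Hence the weights occurring in $T^{*}_{eU}B\otimes I^{\perp}$ lie in $\{\pm1\}+\{\pm2\}=\{\pm1,\pm3\}$ and are all nonzero, so this space has no $U(1)$-invariant, let alone $U$-invariant, vectors. Therefore $(\nabla I)_{eU}=0$ and, by $G$-invariance, $\nabla I\equiv0$: the section $I$ is parallel.

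Finally I would derive a contradiction from positivity. Being parallel, $I$ is annihilated by the curvature of $\nabla$, i.e.\ $R^{Q}(X,Y)I=0$ for all $X,Y$. For a qK manifold the $\mathfrak{sp}(1)$-part of the curvature has the form $R^{Q}(X,Y)=\nu\sum_{k}\omega_{k}(X,Y)A_{k}$, where $A_{1},A_{2},A_{3}$ are the standard infinitesimal rotations of $Q$, the $\omega_{k}$ are the local Kähler forms, and $\nu$ is a nonzero multiple of the scalar curvature (see \cite{Bess}); positivity gives $\nu\neq0$. Writing $I=I_{1}$ in a local frame and choosing $X,Y$ with $\bigl(\omega_{2}(X,Y),\omega_{3}(X,Y)\bigr)\neq0$ makes $R^{Q}(X,Y)I_{1}\neq0$, contradicting $R^{Q}I=0$. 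Equivalently, a positive qK manifold has full $Sp(1)$-holonomy and thus admits no parallel subordinate complex structure. This contradiction rules out $d=1$.

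The main obstacle is the vanishing $\nabla I=0$: it hinges on correctly identifying the isotropy weights on $T_{eU}B$ and on $I^{\perp}\subset Q$ arising from the $Sp(n)\cdot Sp(1)$ holonomy reduction, together with the explicit form of the $\mathfrak{sp}(1)$-curvature. Both are standard facts of quaternionic Kähler geometry, but they must be assembled carefully; once $\nabla I=0$ is established, the contradiction with $\nu\neq0$ is immediate. Note that, in contrast to the previous lemma, the reduction of structure group here is only to $SO(2)$ rather than to the trivial group, so a purely topological obstruction such as the nonvanishing of $p_{1}(P)$ no longer suffices, which is why the argument proceeds through the geometry of $\nabla$.
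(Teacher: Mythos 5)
Your first step coincides exactly with the paper's: a $U$-fixed unit vector $I=p(x)\in Q_{eU}$, extended $G$-invariantly, is the same thing as the paper's global section $gU\mapsto (g\cdot p)(x)$ of the twistor fibration $Z=S(Q)$. Where the paper then simply quotes \cite[Theorem 3.8]{AMP} (a compact positive qK manifold admits no such section), you try to prove the obstruction directly, via $\nabla I=0$ followed by the curvature identity. The curvature step is standard and fine, but your proof of $\nabla I=0$ has a genuine gap.

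The gap is in the weight count: the circle you use is the ``holonomy'' circle $U(1)\subset Sp(1)\subset Sp(n)\cdot Sp(1)$, and this circle is \emph{not} a subgroup of (the isotropy image of) $U$. What the construction of $\rho$ gives you is that the action of $U$ on the fibre $Q_{eU}$ factors through $\rho:U\to V=SO(2)$; the isotropy representation of $U$ on $T_{eU}B$ does \emph{not} factor through $\rho$. An element $u\in U$ acts on $T_{eU}B$ as a product $A(u)\cdot q(u)\in Sp(n)\cdot Sp(1)$, where only $\mathrm{Ad}(q(u))=\rho(u)$ is controlled, while $A(u)\in Sp(n)$ is completely unknown. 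Hence $U$-invariance does not imply invariance under your $U(1)$: the inference ``no $U(1)$-invariant, let alone $U$-invariant, vectors'' runs in the wrong direction. Running the count honestly, i.e.\ with respect to a circle in $U$ that surjects onto $V$ (which is all that invariance actually provides), the weights on $T^{*}_{eU}B\otimes I^{\perp}$ have the form $\mp 1-\mu\pm 2$, where $\mu$ ranges over the unknown weights of the $Sp(n)$-part $A$; these can vanish (e.g.\ $\mu=1$ gives $-1-1+2=0$), so a nonzero $U$-invariant element of $T^{*}_{eU}B\otimes I^{\perp}$ --- that is, $\nabla I\neq 0$ --- is not excluded. (On an actual Wolf space the isotropy group does contain the holonomy $Sp(1)$ and your count would be valid, but in the hypothetical $d=1$ situation nothing of the sort is known about $U$.) Without $\nabla I=0$ the contradiction with $\nu\neq 0$ never gets started; in effect you are attempting to reprove the Alekseevsky--Marchiafava--Pontecorvo theorem in the invariant setting, and that is precisely the nontrivial input the paper chooses to cite rather than reprove.
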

\begin{proof}
If $ d = 1 $, then $ V $ is a connected one-dimensional subgroup of $ SO(3) $ and is thus comprised of rotations around a fixed axis $ L \subset \R^3 $. Choose a point $ x \in L \cap S^2 $. We view the frame $ p \in F $ as a linear isometry $ \R^3 \to Q $ and consider the mapping $ B \to Z  $, $ gU \mapsto (g \cdot p)(x) $. This map is well-defined because
\[ (u \cdot p)(x) = \big(p \cdot \theta_p^{-1}(u \cdot p)\big)(x) = \big(p \cdot \rho(u)\big)(x) = p \big( \rho(u)(x)\big) = p(x) \quad \forall \; u \in U \, . \]
We would thus obtain a global section of the twistor fibration, which is impossible on compact positive qK manifolds \cite[Theorem 3.8]{AMP}.
\end{proof}

\begin{proof}[Proof of \autoref{Konishi}]
From the previous two lemmata, we know that $ U $ acts transitively on $ F $ and consequently, $ G $ acts transitively on $ P $. This action preserves the 3-Sasakian structure, since the Reeb vector fields are (by construction of the Konishi bundle) the infinitesimal generators of the $ SO(3) $-action, which commutes with $ G $.
\end{proof}

\begin{proof}[Proof of \autoref{Wolf}]
By \autoref{Konishi}, the Konishi bundle $ P $ over a simply connected homogeneous positive qK manifold $ B $ is a homogeneous 3-Sasakian manifold, i.e.~one of the manifolds listed in \autoref{3Slist}. Dividing $ P $ by the action of the group $ K \subset G $ from the previous sections, we obtain the list in \autoref{Wolf}. The statement about the Riemannian metric and quaternionic structure follows from the fact that the Konishi bundle is a Riemannian fibration. \\
Let us now assume that $ \overline{B} = G/\overline{U} $ was a non-simply connected homogeneous positive qK manifold, where $ \overline{U} $ is disconnected. Then, $ \overline{B} $ is finitely covered by $ B = G/U $, where $ U $ denotes the identity component of $ \overline{U} $. The qK structure lifts from $ \overline{B} $ to $ B $ and the Konishi bundles $ P, \overline{P} $ over $ B, \overline{B} $ form a diagram
\[\begin{tikzcd}
P \arrow[dashed]{r}{} \arrow{d}{} & \overline{P} \arrow{d}{} \\
B \arrow{r}{} & \overline{B}
\end{tikzcd} \]
We obtain the existence of the dashed equivariant map $ P \to \overline{P} $, so that $ \overline{P} $ is a non-simply connected homogeneous 3-Sasakian manifold. By \autoref{fundthm}, $ \overline{P} $ can only be $ \R P^{4n+3} $, which leads to the same quotient $ \overline{B} = Sp(n+1)/(Sp(n) \times Sp(1)) $ as $ P = S^{4n+3} $.
\end{proof}

\printbibliography

\textsc{Oliver Goertsches and Leon Roschig, Philipps-Universität Marburg, Fachbereich Mathematik und Informatik, Hans-Meerwein-Straße, 35043 Marburg} \\
\textit{E-mail addresses}: \texttt{goertsch@mathematik.uni-marburg.de}, \\
\texttt{roschig@mathematik.uni-marburg.de}

\textsc{Leander Stecker, Universität Hamburg, Fachbereich Mathematik, Bundes\-straße, 20146 Hamburg} \\
\textit{E-mail address}: \texttt{leander.stecker@uni-hamburg.de}
\end{document}